\newcommand{\PreserveBackslash}[1]{\let\temp=\\#1\let\\=\temp}
\newcolumntype{C}[1]{>{\PreserveBackslash\centering}p{#1}}
\newcolumntype{R}[1]{>{\PreserveBackslash\raggedleft}p{#1}}
\newcolumntype{L}[1]{>{\PreserveBackslash\raggedright}p{#1}}
\newcommand{\RQ}{\mathcal{R}}
\def\wbar{\accentset{{\cc@style\underline{\mskip8mu}}}}
\newcommand{\R}{\ensuremath{\mathbb{R}}}
\theoremstyle{plain}
\newtheorem{theorem}{Theorem}
\newtheorem{defn}{Definition}[section]
\newtheorem{lemma}{Lemma}
\newtheorem{cor}{Corollary}[section]
\newtheorem{pro}{Proposition}[section]
\newtheorem{example}{Example}[section]
\begin{document}

\title{New graph invariants based on $p$-Laplacian eigenvalues}
\author{Chuanyuan Ge\footnotemark[1]\and Shiping Liu\footnotemark[2] \and Dong Zhang\footnotemark[3]}

\footnotetext[1]{School of Mathematical Sciences, 
University of Science and Technology of China, Hefei 230026, China. \\
Email address:
{\tt gechuanyuan@mail.ustc.edu.cn} }
\footnotetext[2]{School of Mathematical Sciences, 
University of Science and Technology of China, Hefei 230026, China. \\
Email address:
{\tt spliu@ustc.edu.cn}
}
\footnotetext[3]{LMAM and School of Mathematical Sciences, 
        Peking University,  
      100871 Beijing, China.
\\
Email addresses: 
{\tt dongzhang@math.pku.edu.cn}\;\; and \; {\tt 13699289001@163.com}
}
\date{}\maketitle
\begin{abstract}
We present monotonicity inequalities for certain functions involving eigenvalues of $p$-Laplacians on signed graphs with respect to $p$. 
Inspired by such  monotonicity, we propose new spectrum-based graph invariants, called (variational) cut-off adjacency eigenvalues, 
that are relevant to certain eigenvector-dependent nonlinear eigenvalue problem. 
Using these invariants, we obtain new lower bounds for  the $p$-Laplacian variational eigenvalues, essentially giving the state-of-the-art spectral asymptotics for these eigenvalues. 
Moreover, based on such invariants, we establish two inertia bounds regarding the cardinalities of a maximum independent set and a minimum edge cover, respectively. The first inertia bound enhances the classical Cvetkovi\'c bound, and the second one implies that the $k$-th $p$-Laplacian variational eigenvalue is of the order $2^p$ as $p$ tends to infinity whenever $k$ is larger than the cardinality of a minimum edge cover of the underlying graph. 
We further discover an interesting connection between graph $p$-Laplacian eigenvalues and tensor eigenvalues and discuss applications of our invariants to spectral problems of tensors.   
\end{abstract}

\section{Introduction}

The study of the limit of $p$-Laplacian equations (e.g., asymptotic estimates of eigenvalues and  the limit of solutions) is a classical and important topic in the field of nonlinear PDEs.  
For instance, in 2005, Ishii and Loreti \cite{IL} analyzed the limit of the solution of the Dirichlet problem for the equation \begin{equation*}
  \left\{ \begin{aligned}   
  -\Delta_pu(x)=f(x) &\qquad\text{in }\Omega, \\
     u(x)=0 ~~~~~&\qquad \text{for }x\in \partial \Omega .
    \end{aligned}
    \right.
\end{equation*} as $p\to \infty$. 
However, in the  graph setting which has attracted a lot of attention in recent years, there are few relevant studies on the asymptotic behaviour of $p$-Laplacians. 
We will give a novel limit analysis for  discrete $p$-Laplacians on signed graphs. 
Let us start  with a brief introduction of the graph $p$-Laplacians.

As a classical nonlinear model equation on graphs, the graph $p$-Laplacians provide finer information about the network  topology. 
The spectrum of discrete $p$-Laplacian plays an important role in spectral graph theory, geometric group theory, machine learning, and image processing \cite{Amghibech,HeinBuhler2009,De,DM19,HeinBuhler2010,ZS06,ZHS06}. Unlike linear operators, the set of $p$-Laplacian eigenpairs is quite difficult to characterize. To understand this set, Hein-Tudisco \cite{TudiscoHein18} and Deidda-Putti-Tudisco \cite{DPT21} study the nodal domain count, Weyl-type inequalities and isoperimetric inequalities involving the graph $p$-Laplacian. 
Compared with the linear Laplacian, these inequalities provide better estimates for Cheeger-type constants on graphs.  
 Just as an example, in \cite{Amghibech}, the Cheeger inequality on graphs reads
 \[
 \frac{2^{p-1}}{p^p}h^p\le \lambda_2^{(p)}\le 2^{p-1}h,
 \]
 which refines the classical Cheeger inequality $h^2/2\le \lambda^{(2)}_2\le 2h$ when $1\le p<2$. It is easy to see that the lower bound $\frac{2^{p-1}}{p^p}h^p$ converges to $0$ very quickly when $p$ tends to $+\infty$, but  $\lambda_2^{(p)}$ can goes to $+\infty$ when, for example, the graph is complete. 
 That is, from the point of view of asymptotic analysis, this lower bound provided by the Cheeger inequality is not good. Indeed, the lower bound provided by the multi-way Cheeger inequality \cite{GLZ22+,TudiscoHein18} for the $k$-th variational eigenvalue has similar issues. 
Surprisingly, there is no better known lower bound for general variational  eigenvalues of graph $p$-Laplacians in the literature. Only certain lower bounds under various restrictions about the smallest nonzero and largest eigenvalues are obtained in \cite{Amghibech06,De,DM19}. 
In addition, many other general questions on estimating the eigenvalues of graph $p$-Laplacian are still far from being understood.

Recently, the study on signed graphs has received increasing attention \cite{GL21+,Zaslavsky10} and, in particular, $p$-Laplacian spectral theory has been tentatively established on signed graphs \cite{GLZ22+}. 
There are significant differences between the spectra of signed graphs and those of ordinary graphs. 
Some fundamental results on $p$-Laplacian eigenpairs of ordinary graphs no longer hold for signed graphs. 
However it is worth noting that we can use signed graphs as a tool for studying the linear and nonlinear graph eigenvalue problems \cite{GL21+,GLZ22+}. 

Along this spirit, in this paper we give new insights on $p$-Laplacian spectral behaviour by introducing  new invariants on graphs. Such invariants are actually  spectrum-based quantities on graphs, which are defined as a vector $(L_1,L_2,\ldots, L_n)$ consisting of the limits of the variational eigenvalues divided by $2^{p}$ as $p$ tends towards $+\infty$ (see Definition \ref{def:cutoffadj}). Our invariants follow from a new monotonicity inequality of $p$-Laplacian eigenvalues for varying $p$ (see Theorem \ref{thm:Monotonicity}), which covers recent monotonicity results in 
\cite[Proposition 3.1]{HW20} and \cite[Theorem 1.1]{Zhang}. 
More importantly, the new spectrum-based invariants can be characterized variationally (see Theorem \ref{thm:varitional}), and they are associated with the eigenvalues of the normalized  adjacency matrices of certain subgraphs via  non-trivial inequalities (see Theorem \ref{thm:L_n's antibalanced}). 
This helps us to derive new   growth estimates of the variational eigenvalues of graph $p$-Laplacians, which greatly strengthens the existing  estimates provided by the multi-way Cheeger inequalities. 
For $p$-Laplacians without potentials, our new lower bounds of the variational eigenvalues read as

\begin{equation}
\lambda_k^{(p)}
\ge 2^{p-1}\lambda_{k-(n-|V'|)}(A^{\mu
}_{-\Gamma'}),\;\; k=1,\cdots,n,
\end{equation}
where $\Gamma'=(G',\sigma)$ is any given subgraph with $G'=(V',E')$, $A^{\mu
}_{-\Gamma'}$ is the normalized  adjacency matrix of $-\Gamma'=(G',-\sigma)$  (see (\ref{eq:normAdj}) in Section \ref{sec:pre}) and $\lambda_k(A^{\mu}_{-\Gamma'})$ is the $k$-th eigenvalue of $A^{\mu}_{-\Gamma'}$. 
In addition, in the spirit of \cite{IL}, we  analyze  the limit of the eigenfunctions corresponding to the largest eigenvalues of graph $p$-Laplacians on anti-balanced graphs. It is important to note that this analysis is only applicable in the graph setting, since $p$-Laplacians on Euclidean domains do not have ``largest" eigenvalues. Also, the new invariants are indispensable for analyzing these limits. 
We refer to  Theorem \ref{thm:lower bound} and  Section \ref{Lower bound} for more precise and strengthened results on lower bounds for variational eigenvalues of graph $p$-Laplacians.

Another progress is made in the study of hidden relations to the cardinalities of a maximum independent set and a minimum edge cover. In fact, the proposed invariants also lead to a new perspective on inertia bounds for the cardinality of a maximum independent set \cite{Cvetkovic} (see also Theorem \ref{thm:Inertia bound}).
A signed version of the inertia bound was essentially used in the proof of the Sensitivity Conjecture \cite{Huang19}. 
Our result provides a refined analog for the signed inertia bound in a general setting. Precisely, we prove that for the cardinality $\alpha$ of a maximum independent set  and the cardinality $\beta$ of a minimum edge cover of a graph, we have 
\begin{equation}\label{eq:inertia-L}
 \alpha\le \min_\sigma\#\{k:L_k=0\}, 
\end{equation}
\begin{equation}\label{eq:inertia-L_2}
 \beta\ge \max_\sigma\#\{k:L_k=0\}, 
\end{equation}
where $L_k$ is the $k$-th invariant with respect to the signature $\sigma$. Moreover, the inertia bound \eqref{eq:inertia-L} covers the Cvetkovi$\acute{c}$ bound, see Theorem \ref{thm:Inertia bound} and Propostion \ref{pro:cover}. One advantage of our inertia bounds is that they do not depend on the choices of edge weights and vertex measures. More precisely, for varying  edge weights and vertex measures, the quantities on the right hand side of \eqref{eq:inertia-L} and \eqref{eq:inertia-L_2} stay put. However, in the Cvetkovi$\acute{c}$ bound, one needs to optimize over all the choices of edge weights. 

With a bound regarding the cardinality of an edge cover, we further obtain that $\lambda^{(p)}_k\sim 2^{p}$ as $p$ tends to infinity, whenever $k$ is larger than the cardinality of an edge cover. Precisely, we have for any $k$ bigger than the cardinality of an edge cover and any $p>1$ that 
$$ c\,2^{p}\le \lambda_k^{(p)}\le C\,2^{p}, $$
 where the positive constants $c$ and $C$ are 
independent of $p$. 
In fact, $\lambda_k^{(p)}=\Theta(2^p)$ is the fastest possible growing rate for $\lambda_k^{(p)}$ as $p\to+\infty$. On the other hand, we show that $\lambda_k^{(p)}$ is uniformly bounded whenever $k$ is no larger than the cardinality of a maximal independent set.

We find an interesting connection between the spectral problem of graph $p$-Laplacians with that of the tensors. The spectrum of a (symmetric) tensor is a widely studied object and has attracted much attention due to its wide range of applications \cite{HL13,Qi05,QL17book,QCC18book,Lim15,Qi07}.  However, bounds for the eigenvalues are not well understood, as the nonlinearity of a tensor increases the difficulty of studying these eigenvalues. For instance, there are few known lower bounds for tensor eigenvalues, and most are lower bounds for the largest one  \cite{NQZ09}. 
Moreover, to the best of our knowledge, no one has found it possible to give lower bound estimates for the general eigenvalues of special tensors using linear spectra. 
We establish the first such kind of lower bound estimates for the eigenvalues of the particular tensor induced by the $p$-Laplacian when $p$ is even. 

A variational characterization shows that our new graph invariants $(L_1,\ldots, L_n)$ can be written as the min-max formulation of a nonlinear functional which is closely related to the graph normalized adjacency matrix. More precisely, we have for any $1\leq k\leq n$ that, 
\begin{equation}\label{eq:variational}
L_k:=\lim_{p\to \infty}2^{-p}\lambda_k^{(p)}=\min_{B\in \mathcal{F}_k(\mathcal{S}_2)}\max_{g\in B}\sum_{\{i, j\}\in E}  \max\left\{-g(i)w_{ij}\sigma_{ij}g(j), 0\right\},
\end{equation}
where $\mathcal{S}_2$ is the set of functions with unit $\ell^2(\mu)$-norm, and $\mathcal{F}_k(\mathcal{S}_2)$ is the set consisting of closed symmetric subsets of $\mathcal{S}_2$ with index at least $k$. Here, we denote by $w_{ij}$ and $\sigma_{ij}$ the edge weight and signature of each edge $\{i,j\}\in E$, respectively.
Due to this expression, we call the proposed graph invariants the (variational) cut-off adjacency eigenvalues. 
We also obtain an interlacing theorem for the cut-off adjacency eigenvalues under particular manipulations of the underlying graphs. 
It should be noted that most of the proofs for the results in this paper are building upon the variational characterization (\ref{eq:variational}).

The paper is structured as follows. We begin with a careful review of existing terms, notations, and necessary related results in Section \ref{sec:pre}. In Section \ref{sec:mono},  we prove monotonicity properties of the variational eigenvalues of graph  $p$-Laplacians with respect to $p$, which is the starting point for our new invariants.  
In Section \ref{graph invariant}, we address the main purpose of this work by introducing the new spectrum-based invariants, and prove  lower bounds for variational eigenvalues of graph  $p$-Laplacians based on these invariants in Section  \ref{Lower bound}. 
Two inertia bounds regarding the cardinalities of a maximum independent set and a minimum edge cover are given in Section \ref{sec:inertia}, and an application to  tensor eigenvalues is presented in Section \ref{Tensor}. 
Finally, we conclude with explicit examples in Section \ref{sec:exam}.

\section{Preliminaries}
\label{sec:pre}
In this section, we introduce our settings and notations. Let $G=(V,E)$ be a finite graph where $V=\{1,2,\ldots,n\}$ is a vertex set and $E\subseteq V\times V$ is an edge set. We use the notation $i\sim j$ to indicate $\{i,j\}\in E$. We only consider the undirect graph without self-loops in this paper. We always use $n$ to denote $|V|$ if it is clear. Let $C(V)$  denote all real functions on $V$. We freely interchange  between the function $f\in C(V)$ and the vector $(f(1),f(2),\ldots,f(n))\in \mathbb{R}^n$. 

A signed graph $\Gamma=(G,\sigma)$ is a graph $G=(V,E)$ with a signature $\sigma:E \to \{+1,-1\}$.  For convenience, we use $\sigma_{ij}$ to denote $\sigma(\{i,j\})$. 
For a subgraph $G'=(V',E')$ of $G=(V,E)$, i.e., $V'\subset V$ and $E'\subset E$, we write $(G',\sigma)$ to denote the signed graph of $G'$ whose signature is the restriction of $\sigma$ on $E'$. We call $\Gamma':=(G',\sigma)$ a \emph{subgraph} of $\Gamma$ and $-\Gamma:=(G,-\sigma)$ the \emph{negation} of the signed graph $\Gamma$.

\begin{defn}[Switching]
    Let $\Gamma=(G,\sigma)$ be a signed graph with $G=(V,E)$. A function $\tau$ is called a switching function if it maps from $V$ to $\{+1,-1\}$. We define the operation of switching the signed graph $\Gamma=(G,\sigma)$ by $\tau$ to be changing the signature $\sigma$ to $\sigma^{\tau}$ where $$\sigma^{\tau}_{ij}:=\tau(i)\sigma_{ij}\tau(j),$$
for any $\{i,j\}\in E.$ We denote by $\Gamma^{\tau}$ the signed graph obtained via switching $\Gamma$ by $\tau$.
\end{defn}
\begin{defn}
    We say two signed graphs $\Gamma=(G,\sigma)$ and $\tilde{\Gamma}=(G,\tilde{\sigma})$ are switching equivalent if there exists a switching function $\tau$ such that $\sigma^\tau=\tilde{\sigma}$.
\end{defn}
Next, we give the definitions of balanced and antibalanced graphs. The following definition is equivalent to the original one by Harary \cite{Harary53} due to the Zaslavsky’s switching lemma \cite{Zaslavsky82}.
\begin{defn}
    A signed graph $\Gamma=(G,\sigma)$ is balanced (resp., antibalanced) if it is switching equivalent to the signed graph $\Gamma^+=(G,\sigma^+)$ (resp., $\Gamma^-(G,\sigma^{-})$) where $\sigma^+\equiv+1$ (resp., $\sigma^-\equiv-1$).
\end{defn}
In this paper, we focus on $p$-Laplacians  on a signed graph $\Gamma=(G,\sigma)$ for $p>1$. Define $\Psi_p:\mathbb{R}\to \mathbb{R}$  as follows: 
 \begin{equation*}
  \Psi_p(t)=\left\{ \begin{aligned}
        &|t|^{p-2}t\quad &&t\neq 0,\\
        &0 \quad && t=0.
    \end{aligned}
    \right.
\end{equation*}
Given  an edge weight $w:E\to \R^+$ and a potential function  $\kappa:V\to\R$, the signed $p$-Laplacian $\Delta_p^{\sigma}: C(V)\to C(V)$ is defined as follows:
$$\Delta_p^\sigma f(i):=\sum_{j\sim i}w_{ij}\Psi_p\left(f(i)-\sigma_{ij}f(j)\right)+\kappa_i\Psi_p(f(i)),$$
for any $f\in C(V)$ and $i\in V$, where we abbreviate $w(\{i,j\})$ and $\kappa(i)$ as $w_{ij}$ and $\kappa_i$, respectively. 
In particular, when the signature $\sigma\equiv+1$, the signed $p$-Laplacians reduces to the usual $p$-Laplacians on unsigned graphs. In this paper, we regard a graph $G=(V,E)$ as a signed graph $\Gamma=(G,\sigma)$ with $\sigma\equiv+1$.

Given a vertex measure $\mu:V\to\R^+$, we call a non-zero function $f\in C(V)$ an eigenfunction of $\Delta_p^{\sigma}$ if there exists a constant $\lambda\in \mathbb{R}$ such that $$\Delta_p^{\sigma}(f(i))=\lambda\mu_{i}\Psi_p(f(i)),\quad\text{for any } \,\,i\in V,$$ where we write $\mu_i$ instead of  $\mu(i)$ for short.  We call this constant $\lambda$ an eigenvalue, and $(\lambda, f)$ an eigenpair, of $\Delta_p^{\sigma}$.


From now on, whenever we speak of a signed graph $\Gamma=(G,\sigma)$, we indicate a signed graph $\Gamma=(G,\sigma)$ together with a  vertex measure $\mu$, an edge weight $w$ and a potential function $\kappa$. By a subgraph $\Gamma'=(G',\sigma)$ with $G'=(V',E')$ of $\Gamma$, we always mean the signed graph $\Gamma'=(G',\sigma)$ together with the vertex measure, edge weight and potential function being the restrictions of 
$\mu$, $w$ and $\kappa$ on $V'$, $E'$ and $V'$, respectively.

Similar as the continuous case, the index theory yields a characterization of a subset of the eigenvalues of the graph $p$-Laplacian.
For any subset $B$ of a Banach space $\mathcal{B}$, we say that $B$ is \emph{symmetric} if $B=-B$ where \[-B:=\{-x:x\in B \}.\]
\begin{defn}[index]
Let $B$ be a nonempty closed symmetric subset of a Banach space. The   \emph{index} (or Krasnoselskii genus) of $B$  is defined as follows:
\begin{equation*}
\gamma(B):=\begin{cases}
\min\limits\{k\in\mathbb{Z}^+: \,\,\text{there exists an odd continuous map}\; h: B \to \mathbb{R}^k\},  \\
\infty \qquad \text{if for any } k\in \mathbb{Z}^+,
 \text{ there does not exist any odd continuous map from } B \text{ to } \mathbb{R}^k. 
\end{cases}
\end{equation*}
\end{defn}
Let us recall some fundamental properties of the index (or Krasnoselskii genus) which can be found in books about variational methods such as 
\cite{Papageorgiou09,Rabinowitz,Struwe,Solimini}.

\begin{pro}\label{pro:index}
   Let $\mathcal{B}$ be a Banach space. We have the following properties.
	\begin{itemize}
          \item [(i)]  Let $B$ be a bounded  symmetric open set containing the origin $O$ of $\mathcal{B}$. Then we have $\gamma(\partial B)=\text{dim }\mathcal{B}$.
          \item [(ii)] 
        Let $\mathcal{C}$ be a linear subspace of $\mathcal{B}$ with $\mathrm{dim }\, \mathcal{C}=k<+\infty$. Define $p_{\mathcal{C}}:\mathcal{B}\to \mathcal{B}$ to be the projection operator onto $\mathcal{C}$. If $B$ is a closed symmetric subset of $\mathcal{B}$ with $\gamma(B)>k$, then $B\cap(I-p_{\mathcal{C}})(\mathcal{B})\neq \emptyset$.
		\item [(iii)]
   Let $H:B\to \mathbb{R}^k$ be an odd continuous map, where $B$ is a closed symmetric subset of $\mathcal{B}$ with $\gamma(B)>k$. Then we have $\gamma(H^{-1}(0))\geq \gamma(B)-k$.
  	\item [(iv)]
  If $B\subset \mathcal{B}$ is a compact symmetric set and the origin $O\notin B$, then there is a neighbourhood $N$ of $B$ such that $\overline{N}$ is a closed symmetric set with $\gamma(\overline{N})=\gamma(B)$.
  \item [(v)]
  If $B_1, B_2$ are two closed symmetric subset of $ \mathcal{B}$ with $B_1\subset B_2$, then $\gamma(B_1)\leq \gamma(B_2)$.
	\end{itemize}
\end{pro}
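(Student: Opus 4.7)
The plan is to derive the five properties from a single analytic-topological core: the Borsuk--Ulam theorem, the Tietze extension theorem, and the symmetrization trick that turns any continuous extension $\widetilde h$ into an odd one via $x\mapsto(\widetilde h(x)-\widetilde h(-x))/2$. I would handle the items in the order $(v)\to(i)\to(iii)\to(ii)\to(iv)$, since each later statement reuses the previous ones. Throughout, I read the definition of $\gamma$ with the standard convention that the odd continuous test maps are required to be nowhere zero on $B$.

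Item $(v)$ is immediate: any odd continuous $h\colon B_2\to\mathbb{R}^k\setminus\{0\}$ restricts to one on $B_1$, so $\gamma(B_1)\le\gamma(B_2)$. For $(i)$ with $n=\dim\mathcal{B}$, the upper bound $\gamma(\partial B)\le n$ follows from the inclusion $\partial B\hookrightarrow\mathcal{B}\setminus\{O\}\cong\mathbb{R}^n\setminus\{0\}$ composed with radial normalization; the lower bound $\gamma(\partial B)\ge n$ is the classical Borsuk theorem, which rules out odd continuous maps from the symmetric boundary of a bounded open neighbourhood of $O$ in $\mathbb{R}^n$ into $\mathbb{R}^{n-1}\setminus\{0\}$ (one may pull back to $S^{n-1}$ by a Minkowski-functional argument to make this concrete).

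For $(iii)$, set $k'=\gamma(H^{-1}(0))$ and pick an odd continuous $h'\colon H^{-1}(0)\to\mathbb{R}^{k'}\setminus\{0\}$. Applying Tietze component-wise to the closed subset $H^{-1}(0)\subset B$ and then symmetrizing produces an odd continuous extension $\widetilde{h'}\colon B\to\mathbb{R}^{k'}$ that agrees with $h'$ on $H^{-1}(0)$. Define $\Phi\colon B\to\mathbb{R}^{k+k'}$ by $\Phi(x):=(H(x),\widetilde{h'}(x))$; it is odd, continuous, and nowhere zero, since on $H^{-1}(0)$ the second block is $h'\ne 0$ while off $H^{-1}(0)$ the first block $H$ is nonzero. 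Hence $\gamma(B)\le k+k'$, i.e.\ $\gamma(H^{-1}(0))\ge\gamma(B)-k$. Property $(ii)$ is the special case $H=p_\mathcal{C}\vert_B\colon B\to\mathcal{C}\cong\mathbb{R}^k$, which gives $\gamma(B\cap(I-p_\mathcal{C})(\mathcal{B}))=\gamma(H^{-1}(0))\ge\gamma(B)-k\ge 1$ and hence the desired nonemptiness.

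For $(iv)$, fix an odd continuous $h\colon B\to\mathbb{R}^{\gamma(B)}\setminus\{0\}$ and extend it to an odd continuous $\widetilde h\colon\mathcal{B}\to\mathbb{R}^{\gamma(B)}$ by Tietze-plus-symmetrization. The preimage $U:=\widetilde h^{-1}(\mathbb{R}^{\gamma(B)}\setminus\{0\})$ is an open symmetric neighbourhood of $B$, and the compactness of $B$ together with the metric structure of $\mathcal{B}$ furnishes an $r>0$ such that the symmetric open tube $N:=\{x:\dist(x,B)<r\}$ satisfies $\overline N\subset U$; then $\widetilde h\vert_{\overline N}$ witnesses $\gamma(\overline N)\le\gamma(B)$, while $(v)$ supplies the reverse inequality. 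The main obstacle I anticipate is precisely this last item: guaranteeing that the Tietze extension can be made odd (solved by symmetrization) and that the tube $N$ can simultaneously be taken symmetric and have its closure inside the nonvanishing locus, which is exactly where the compactness hypothesis on $B$ enters. Once this topological scaffolding is in place, everything either reduces to Borsuk--Ulam or follows from direct manipulation of the definition of $\gamma$.
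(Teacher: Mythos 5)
The paper does not actually prove this proposition: it is stated as a recollection of standard facts about the Krasnoselskii genus, with the proofs delegated to the cited textbooks of Rabinowitz, Struwe, Papageorgiou--Kyritsi and Solimini. Your write-up is a correct, self-contained reconstruction of exactly those standard arguments (Tietze extension plus the symmetrization $x\mapsto(\widetilde h(x)-\widetilde h(-x))/2$, the product map $(H,\widetilde{h'})$ for the subadditivity in (iii), the distance-tube construction for (iv), and Borsuk's theorem for (i)), and you are right that the paper's definition of $\gamma$ must be read with the test maps required to be nowhere zero. Two minor caveats: in (i) the parenthetical ``pull back to $S^{n-1}$ by a Minkowski-functional argument'' only literally works when $B$ is star-shaped about the origin; for a general bounded symmetric open neighbourhood of $O$ one should invoke Borsuk's theorem in its degree-theoretic form (the degree of an odd map on such a domain is odd), which is the statement you correctly identify as the key ingredient. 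In (iv) you implicitly use that $\gamma(B)<\infty$ in order to choose the map $h$; this is guaranteed for a compact symmetric set avoiding the origin (finite covering argument), and in any case the claim is trivial by (v) when $\gamma(B)=\infty$, but a sentence acknowledging this would close the gap. Neither point affects the correctness of the overall argument.
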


We define $$\mathcal{S}_p(V):=\left\{f\in C(V)\left|\;\sum_{i=1}^n\mu_{i}|f(i)|^p=1\right.\right\}$$ to the the set of functions with unit $\ell^p(\mu)$ norm
and \[\mathcal{F}_k(\mathcal{S}_p(V)):=\left\{B\subset\mathcal{S}_p: B \text{ is closed symmetric and } \gamma(B)\geq k\right\}.\] Whenever it is clear from the context, we write $\mathcal{S}_p$ and $\mathcal{F}_k(\mathcal{S}_p)$ instead of $\mathcal{S}_p(V)$ and $\mathcal{F}_k(\mathcal{S}_p(V))$  for short.
We define the Rayleigh quotient of $\Delta_p^{\sigma}$ as follows:$$\mathcal{R}_p^{\sigma}(f)=\frac{\sum_{\{i,j\}\in E} w_{ij}|f(i)-\sigma_{ij}f(j)|^p+\sum_{i=1}^n\kappa_i|f(i)|^p}{\sum_{i=1}^{n}\mu_{i}|f(i)|^p},$$ for any non-zero function $f\in C(V).$
The Lusternik-Schnirelman theory
allows us to define a sequence of \emph{variational  eigenvalues} of $\Delta_p^\sigma$ \cite{Papageorgiou09}:
$$ \lambda_k^{(p)}(\Gamma):=\min_{B\in \mathcal{F}_{k}( \mathcal{S}_p)}\max\limits_{f\in B}  \RQ_p^\sigma(f),\;\;k=1,2,\cdots,n.$$
We remark that all variational eigenvalues are eigenvalues of $\Delta_p^{\sigma}$. But there does exist eigenvalues which are not variational eigenvalues, see \cite[Theorem 6]{Amghibech}. Recall that the (variational) eigenvalues are switching invariant.
\begin{pro}{\cite[Proposition 2.5]{GLZ22+}}\label{pro:GLZswitching}
Let $(G,\sigma)$ and $(G,\tilde{\sigma})$ be two signed graphs with the same vertex measure, edge weight and potential function. Suppose that $\tilde{\sigma}$ is switching equivalent to $\sigma$ such that $\tilde{\sigma}=\sigma^\tau$ for some switching function $\tau$. Then $(\lambda,f)$ is an eigenpair of $\Delta_p^{\sigma}$ if and only $(\lambda, \tau f)$ is an eigenpair of  $\Delta_p^{\tilde{\sigma}}$. Moreover, we have for variational eigenvalues that \[\lambda_k^{(p)}(G,\sigma)=\lambda_k^{(p)}(G,\tilde{\sigma})\] for any $k=1,2,\ldots,n$.
\end{pro}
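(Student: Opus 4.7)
The plan is to exploit the elementary identity $g(i)-\tilde\sigma_{ij}g(j)=\tau(i)\bigl(f(i)-\sigma_{ij}f(j)\bigr)$, which follows by writing $g(i):=\tau(i)f(i)$ and using $\tau(j)^2=1$ together with $\tilde\sigma_{ij}=\tau(i)\sigma_{ij}\tau(j)$. Combined with the oddness and homogeneity of $\Psi_p$, namely $\Psi_p(\tau(i)t)=\tau(i)\Psi_p(t)$, this identity is the single engine that drives both halves of the proposition.

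For the eigenpair equivalence, I would plug $g=\tau f$ into the definition of $\Delta_p^{\tilde\sigma}$ and apply the identity term by term:
\begin{align*}
\Delta_p^{\tilde\sigma}g(i)
&=\sum_{j\sim i}w_{ij}\Psi_p\bigl(\tau(i)(f(i)-\sigma_{ij}f(j))\bigr)+\kappa_i\Psi_p(\tau(i)f(i))\\
&=\tau(i)\Bigl(\sum_{j\sim i}w_{ij}\Psi_p(f(i)-\sigma_{ij}f(j))+\kappa_i\Psi_p(f(i))\Bigr)
=\tau(i)\Delta_p^{\sigma}f(i).
\end{align*}
If $(\lambda,f)$ is an eigenpair of $\Delta_p^{\sigma}$, the right-hand side becomes $\tau(i)\lambda\mu_i\Psi_p(f(i))=\lambda\mu_i\Psi_p(g(i))$, giving the eigenpair $(\lambda,\tau f)$ for $\Delta_p^{\tilde\sigma}$. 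Symmetry of the argument (note that $\tau$ is its own inverse since $\tau(i)^2=1$, and $(\sigma^\tau)^\tau=\sigma$) yields the converse.

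For the variational eigenvalues, I would apply the same identity at the level of the Rayleigh quotient: $|g(i)-\tilde\sigma_{ij}g(j)|^p=|f(i)-\sigma_{ij}f(j)|^p$ and $|g(i)|^p=|f(i)|^p$ force $\RQ_p^{\tilde\sigma}(\tau f)=\RQ_p^{\sigma}(f)$; in particular the multiplication map $T_\tau\colon f\mapsto \tau f$ restricts to an odd homeomorphism of $\mathcal{S}_p$ onto itself. Because $T_\tau$ and $T_\tau^{-1}=T_\tau$ are odd and continuous, they send closed symmetric sets to closed symmetric sets of the same index (by the standard properties of $\gamma$ listed in Proposition \ref{pro:index}, in particular monotonicity applied in both directions). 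Therefore $B\mapsto T_\tau(B)$ is a bijection of $\mathcal{F}_k(\mathcal{S}_p)$, and
\[
\lambda_k^{(p)}(G,\tilde\sigma)
=\min_{B\in\mathcal{F}_k(\mathcal{S}_p)}\max_{g\in B}\RQ_p^{\tilde\sigma}(g)
=\min_{B\in\mathcal{F}_k(\mathcal{S}_p)}\max_{f\in T_\tau(B)}\RQ_p^{\sigma}(f)
=\lambda_k^{(p)}(G,\sigma).
\]

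There is no real obstacle here; the proof is essentially a change-of-variables computation. The only point that deserves care is the index-preservation step, where one must observe that $T_\tau$, being an odd linear involution, sends elements of $\mathcal{F}_k(\mathcal{S}_p)$ to elements of $\mathcal{F}_k(\mathcal{S}_p)$ with the same Krasnoselskii genus; beyond that the argument reduces to bookkeeping with the oddness of $\Psi_p$ and $\tau(i)^2=1$.
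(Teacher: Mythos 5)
Your proposal is correct. Note that the paper does not actually prove this proposition --- it is quoted verbatim from \cite[Proposition 2.5]{GLZ22+} --- but your argument is exactly the standard gauge-transformation proof one expects: the identity $g(i)-\tilde\sigma_{ij}g(j)=\tau(i)\bigl(f(i)-\sigma_{ij}f(j)\bigr)$ together with $\Psi_p(\tau(i)t)=\tau(i)\Psi_p(t)$ handles the eigenpair equivalence, and the induced isometry $T_\tau$ of $\mathcal{S}_p$ handles the variational eigenvalues. The one step you should attribute correctly is index preservation: Proposition \ref{pro:index}~(v) concerns set inclusion, not images under maps, so what you really need is the standard fact that an odd homeomorphism preserves the Krasnoselskii genus --- a fact the paper itself invokes without comment for $\phi_{q,p}$ in the proof of Theorem \ref{thm:Monotonicity}, so your usage is consistent with the paper's conventions.
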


The adjacency matrix $A_{\Gamma}=(a_{ij})_{1\leq i,j\leq  n}$ of a signed graph $\Gamma=(G,\sigma)$ with $G=(V,E)$ is an $n\times n$ matrix  defined as follows
 \begin{equation*}
   a_{ij}=\left\{ \begin{aligned}
        &w_{ij}\quad &&i \sim j \text{ and }\sigma_{ij}=+1,\\
        &-w_{ij}\quad&& i\sim j \text{ and }\sigma_{ij}=-1,\\
        &0 \quad && i\sim j.
    \end{aligned}
    \right.
\end{equation*}
Moreover, we define  the \emph{normalized adjacency matrix} $A^{\mu}_{\Gamma}$ of $\Gamma$ as 
\begin{equation}\label{eq:normAdj}
A^{\mu}_{\Gamma}=D^{-1}A_{\Gamma},\end{equation}
where $D=(d_{ij})_{1\leq i,j\leq n}$ is the diagonal matrix with $d_{ii}=\mu_i$ for $1\leq i \leq n$. 
We remark that $A_{-\Gamma}=-A_{\Gamma}$ and $A^{\mu}_{-\Gamma}=-A^{\mu}_{\Gamma}$ where $-\Gamma=(G,-\sigma)$ is the negation of $\Gamma$.

Note that when $\mu\equiv1$, the normalized  adjacency matrix reduces to the adjacency matrix.

In this paper, we always denote by $$\lambda_1(M)\leq \lambda_2(M)\leq \cdots\leq \lambda_n(M)$$  the eigenvalues of an $n\times n$ matrix $M$.  We use $n^+(M)$ (resp., $n^-(M)$) to denote the number of positive (resp., negative) eigenvalues of $M$. 

 We define an inner product  $\langle\, ,\,\rangle_{\mu}:C(V)\times C(V)\to \mathbb{R}$ as follows
 $$\langle f,g\rangle_{\mu}=\sum_{i=1}^n\mu_if(i)g(i),\qquad \text{for any }f,g\in C(V). $$
 The Rayleigh quotient $\mathcal{R}_{A^{\mu}_{\Gamma}}$ of $A^{\mu}_{\Gamma}$ is defined as
$$\mathcal{R}_{A^{\mu}_\Gamma}(f):=\frac{f^TA_\Gamma f}{\langle f,f\rangle_{\mu}}=\frac{\langle f,A_{\Gamma}^{\mu}f\rangle_{\mu}}{\langle f,f\rangle_{\mu}},$$ for any non-zero function $f\in C(V).$
We next recall the following mini-max principle. For any $1\leq k \leq n$,
let $P_k$ (resp.,  $\overline{P}_k$) be the set of linear subspaces of $\mathbb{R}^n$ of dimension (resp., codimension) at least $k$. Then we have the following characterizations of the eigenvalues:

$$\lambda_k(A^{\mu}_{\Gamma})=\min_{P\in P_k}\max_{0\neq f \in P}\mathcal{R}_{A^{\mu}_{\Gamma}}(f)=\max_{P\in \overline{P}_{k-1}}\min_{0\neq f \in \overline{P}}\mathcal{R}_{A^{\mu}_{\Gamma}}(f).$$

Let us conclude this section by defining a family of maps which will be often used in the proofs of our results. For $p, q> 1$, we define the map $\phi_{p,q}:\mathcal{S}_p\to \mathcal{S}_q$ 
as follows
\begin{equation}\label{eq:phipq}
\phi_{p,q}\left(f_1,f_2,\ldots,f_n\right):=\left(|f_1|^{\frac{p}{q}}\mathrm{sign}(f_1), |f_2|^{\frac{p}{q}}\mathrm{sign}(f_2),\ldots,|f_n|^{\frac{p}{q}}\mathrm{sign}(f_n)\right),
\end{equation}
for any $f=(f_1,f_2,\ldots,f_n)\in \mathcal{S}_p$, where the function $\mathrm{sign}$ is defined as
 \begin{equation*}
   \mathrm{sign}(x)=\left\{ \begin{aligned}
        &+1 & \qquad x> 0,\\
        &0 & \qquad x=0,\\
        &-1  &  x<0.
    \end{aligned}
    \right.
\end{equation*}
Observe that $\phi_{q,p}$ is the inverse map of $\phi_{p,q}$. Indeed, $\phi_{p,q}$ is an odd homeomorphism.



\section{Monotonicity of variational eigenvalues}
\label{sec:mono}
In this section, we prove the monotonicity of variational eigenvalues with respect to $p$.  Denote the variational eigenvalues of the $p$-Laplacian of  a signed graph $\Gamma$ by $$\lambda_k^{(p)}:=\lambda_k^{(p)}(\Gamma)\,\,\text{for any}\,\,1\leq k\leq n.$$  
Given a function $f\in C(V)$, we use $f_i$ to denote $f(i)$ for any $1\leq i \leq n$ in order to ease the notations. Note that  we only use this abbreviation in this section. 

The following theorem is an extension of the theorems in \cite {Zhang}.
\begin{theorem}\label{thm:Monotonicity}
Let $\Gamma=(G,\sigma)$ be a signed graph. If $1< p \leq q$, $1\leq k \leq n$ and $\kappa \geq 0$, we have 
\begin{equation}\label{eq:de}
    2^{-p}\lambda_k^{(p)}\geq 2^{-q}\lambda_k^{(q)},
\end{equation}  and 
\begin{equation}\label{eq:in}
    p\left(\frac{\lambda_k^{(p)}}{\mathcal{D}}\right)^{\frac{1}{p}}\leq q\left(\frac{\lambda_k^{(q)}}{\mathcal{D}}\right)^{\frac{1}{q}},
\end{equation}
where $\mathcal{D}:=\max_{1\leq i\leq n}\frac{2\kappa_i+\sum_{j\sim i}w_{ij}}{2\mu_i}$.
\end{theorem}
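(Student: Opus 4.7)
The plan is to exploit the odd homeomorphism $\phi_{p,q}:\mathcal{S}_p \to \mathcal{S}_q$ of (\ref{eq:phipq}) and its inverse $\phi_{q,p}$ to transport admissible sets between $\mathcal{F}_k(\mathcal{S}_p)$ and $\mathcal{F}_k(\mathcal{S}_q)$. Since $\phi_{p,q}$ is an odd homeomorphism, the Krasnoselskii genus is preserved and $\phi_{p,q}(\mathcal{F}_k(\mathcal{S}_p)) \subseteq \mathcal{F}_k(\mathcal{S}_q)$, with the symmetric statement for $\phi_{q,p}$. Each inequality will then follow from a pointwise comparison of the Rayleigh quotient before and after the transformation, combined with the min-max characterisation of $\lambda_k^{(p)}$.

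For (\ref{eq:de}), I take arbitrary $f\in\mathcal{S}_p$ and set $g := \phi_{p,q}(f)$ with $s = p/q \in (0,1]$, so that $|g_i|^q = |f_i|^p$. The key is the edge-wise bound $|g_i - \sigma_{ij}g_j|^q \le 2^{q-p}|f_i - \sigma_{ij}f_j|^p$, handled in two cases. If $f_i$ and $\sigma_{ij}f_j$ share a sign, then $|g_i-\sigma_{ij}g_j| = \bigl||f_i|^s - |f_j|^s\bigr|$ and the concavity inequality $|a^s-b^s| \le |a-b|^s$ for $a,b\ge 0$ and $s\in(0,1]$ gives the stronger bound $|g_i-\sigma_{ij}g_j|^q \le |f_i - \sigma_{ij}f_j|^p$. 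If they have opposite signs, then $|g_i-\sigma_{ij}g_j| = |f_i|^s + |f_j|^s$, and maximising $(t^s + (1-t)^s)^{1/s}$ over $t\in[0,1]$ (attained at $t=1/2$) yields $(a^s + b^s)^{1/s}\le 2^{1/s - 1}(a+b)$, which after raising to the $q$-th power produces the desired bound with constant $2^{q-p}$. Summing over edges, using $\sum_i \kappa_i|g_i|^q = \sum_i \kappa_i|f_i|^p$ together with $\kappa \ge 0$ and $2^{q-p}\ge 1$, gives $\mathcal{R}_q^\sigma(g)\le 2^{q-p}\mathcal{R}_p^\sigma(f)$. Taking $B\in\mathcal{F}_k(\mathcal{S}_p)$ optimal for $\lambda_k^{(p)}$ and applying the min-max formula for $\lambda_k^{(q)}$ to $\phi_{p,q}(B)\in\mathcal{F}_k(\mathcal{S}_q)$ yields $\lambda_k^{(q)}\le 2^{q-p}\lambda_k^{(p)}$, which is (\ref{eq:de}).

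For (\ref{eq:in}), I run the transformation in the reverse direction: take $g\in\mathcal{S}_q$ and set $f := \phi_{q,p}(g)\in\mathcal{S}_p$, so $|f_i|^p = |g_i|^q$, with $t = q/p\ge 1$. The required pointwise edge bound is
\[
|f_i - \sigma_{ij}f_j|^p \le t^p \max(|g_i|,|g_j|)^{q-p}\,|g_i - \sigma_{ij}g_j|^p,
\]
proved via the mean value theorem applied to $x\mapsto x^t$ in the same-sign case, and via the sub-additivity inequality $a^t + b^t \le \max(a,b)^{t-1}(a+b)$ for $a,b\ge 0$ in the opposite-sign case (the factor $t^p\ge 1$ absorbs the tighter opposite-sign estimate). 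Summing and applying H\"older's inequality with conjugate exponents $q/(q-p)$ and $q/p$, then using $\max(|g_i|,|g_j|)^q\le|g_i|^q + |g_j|^q$ together with the defining bound $\sum_{j\sim i}w_{ij} \le 2\mathcal{D}\mu_i$ and $g\in\mathcal{S}_q$, controls the edge Dirichlet contribution of $\mathcal{R}_p^\sigma(f)$ by a constant multiple of $(q/p)^p\mathcal{D}^{1-p/q}A^{p/q}$, where $A := \sum_{\{i,j\}}w_{ij}|g_i-\sigma_{ij}g_j|^q$. The potential term $\sum_i\kappa_i|f_i|^p$ equals $\sum_i\kappa_i|g_i|^q$ and is handled uniformly with the edge term via an augmented-graph viewpoint in which it becomes the Dirichlet contribution of an edge to a virtual boundary vertex of value zero; after this aggregation, the full numerator of $\mathcal{R}_p^\sigma(f)$ is bounded by $(q/p)^p\mathcal{D}^{1-p/q}\mathcal{R}_q^\sigma(g)^{p/q}$. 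The min-max principle applied to $B'\in\mathcal{F}_k(\mathcal{S}_q)$ optimal for $\lambda_k^{(q)}$ (with $\phi_{q,p}(B')\in\mathcal{F}_k(\mathcal{S}_p)$) then yields $\lambda_k^{(p)}\le(q/p)^p\mathcal{D}^{1-p/q}(\lambda_k^{(q)})^{p/q}$, equivalent to (\ref{eq:in}).

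The main obstacle will be the H\"older assembly in (\ref{eq:in}): one must balance the factor of $2$ in $\mathcal{D} = \max_i(2\kappa_i + \sum_{j\sim i}w_{ij})/(2\mu_i)$ against the factor of $2$ produced by $\max(|g_i|,|g_j|)^q \le |g_i|^q + |g_j|^q$ and against the separate treatment of edge and vertex contributions, so that the final constant comes out exactly $(q/p)^p\mathcal{D}^{1-p/q}$. Augmenting the graph by a boundary vertex $\partial$ with value zero, connected to each $i$ by an edge of weight $\kappa_i$ and positive signature, unifies the Dirichlet and potential terms into a single Dirichlet energy on the larger graph and reduces the whole bound to one H\"older step producing the correct constant directly.
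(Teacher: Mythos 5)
Your argument for \eqref{eq:de} is correct and essentially the paper's: a pointwise edge estimate (your two-case analysis replaces the paper's Lemma~\ref{lemma:geq}, which packages both cases into $\bigl||b|^t\mathrm{sign}(b)-\sigma|a|^t\mathrm{sign}(a)\bigr|\ge|b-\sigma a|\bigl(\frac{|b|^t+|a|^t}{2}\bigr)^{1-1/t}$), followed by transport of genus-$k$ sets through the odd homeomorphism $\phi_{p,q}$ and the min-max formula. The implicit use of an optimal set $B\in\mathcal{F}_k(\mathcal{S}_p)$ is harmless (the paper justifies existence via a compactness lemma, and near-minimizers would do).

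For \eqref{eq:in}, however, there is a genuine gap, and it is exactly the one you flag as ``the main obstacle'': your pointwise bound
$|f_i-\sigma_{ij}f_j|^p\le t^p\max(|g_i|,|g_j|)^{q-p}|g_i-\sigma_{ij}g_j|^p$
is strictly weaker than what is needed, and the augmented-graph device does not repair it. After H\"older with exponents $q/p$ and $q/(q-p)$, your second factor is $\bigl(\sum_{\{i,j\}}w_{ij}\max(|g_i|,|g_j|)^q+\sum_i\kappa_i|g_i|^q\bigr)^{1-p/q}$, and the only available estimate $\max(a,b)^q\le a^q+b^q$ (which is tight when one argument vanishes, so it cannot be improved to the average) yields $\sum_i\bigl(\kappa_i+\sum_{j\sim i}w_{ij}\bigr)|g_i|^q\le\max_i\frac{\kappa_i+\sum_{j\sim i}w_{ij}}{\mu_i}$. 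This constant is $\ge\mathcal{D}=\max_i\frac{2\kappa_i+\sum_{j\sim i}w_{ij}}{2\mu_i}$ and equals $2\mathcal{D}$ when $\kappa\equiv0$; since the exponent $1-p/q$ is nonnegative, you end up proving $\lambda_k^{(p)}\le(q/p)^p(2\mathcal{D})^{1-p/q}(\lambda_k^{(q)})^{p/q}$, which is strictly weaker than \eqref{eq:in}. The paper avoids this loss by using the sharper pointwise inequality
$\bigl||b|^t\mathrm{sign}(b)-\sigma|a|^t\mathrm{sign}(a)\bigr|\le t\,|b-\sigma a|\bigl(\tfrac{|b|^t+|a|^t}{2}\bigr)^{1-1/t}$
(Lemma~\ref{lemma:leq}, extending Amghibech's Lemma~3 to the opposite-sign case via H\"older applied to $\int_{-c}^{b}|x|^{t-1}\,dx$); the symmetric average $\tfrac{|b|^t+|a|^t}{2}$, followed by H\"older and the convexity of $x\mapsto x^p$, is precisely what produces $\sum_i\frac{2\kappa_i+\sum_{j\sim i}w_{ij}}{2}|g_i|^q\le\mathcal{D}$. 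Note that $\max(a,b)^{t-1}\ge\bigl(\tfrac{a^t+b^t}{2}\bigr)^{1-1/t}$ always, so your mean-value-theorem bound cannot be massaged into the paper's; you need to replace it by the averaged form to recover the stated constant.
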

Hua and Wang \cite[Proposition 3.1]{HW20} show a related monotonicity result concerning the Dirichlet $p$-Laplacian. Translating equivalently into the current setting, they prove that, for an unsigned graph with 
$\kappa\geq 0$ and $\mu_i=\kappa_i+\sum_{j\sim i}w_{ij}$ for any $i$,
\begin{equation}\label{eq:HuaWang}
   p \left(\lambda_1^{(p)}\right)^{\frac{1}{p}}\leq q \left(\lambda_1^{(q)}\right)^{\frac{1}{q}}.
\end{equation}
This is an extension of the continuous case result \cite{Lindqvist}. Note that if $\mu_i=\kappa_i+\sum_{j\sim i}w_{ij}$, then the constant $\mathcal{D}\leq 1$. Therefore, our estimate (\ref{eq:in}) improves (\ref{eq:HuaWang}).

For an unsigned graph $G=(V,E)$ with $\kappa\equiv0$ and $\mu_i=\sum_{j\sim i}w_{ij}$ for any $i$,  Dru\c{t}u and Mackay \cite[Lemma 2.7]{DM19} prove that $\lambda_2^{(p)}$ is right lower semi-continuous with respect to $p$ via establishing the following  monotonicity 
\begin{equation}\label{eq:DM}
    \left(\frac{\lambda_2^{(p)}}{|E|}\right)^{\frac{1}{p}}\leq \left(\frac{\lambda_2^{(q)}}{|E|}\right)^{\frac{1}{q}},
\end{equation}
for any $2\leq p \leq q$, where $|E|$ is the number of edges in $G$. Note that if $\mu_i=\sum_{j\sim i}w_{ij}$, then the constant $\mathcal{D}=\frac{1}{2}$. In case $2\leq p\leq q\leq \ln (2|E|)$, our estimate (\ref{eq:in}) implies $(\ref{eq:DM})$. For the case $\ln(2|E|)\leq p\leq q$, their estimate $(\ref{eq:DM})$ implies ours (\ref{eq:in}) with $k=2$.

Next, we prepare for the proof of Theorem \ref{thm:Monotonicity} several lemmas. The following two inequalities are modified from \cite[Lemma A.1]{Zhang} and \cite[Lemma 3]{Amghibech} to adapt our setting of signed graphs. 
\begin{lemma}\label{lemma:geq}
For any $t\geq 1,\,a,b\in \mathbb{R},\,\sigma\in \{-1,+1\}$, we have $$\left| |b|^t\mathrm{sign}(b)-\sigma|a|^t\mathrm{sign}(a)  \right|   \geq |b-\sigma a| \left(  \frac{|b|^t+|a|^t}{2} \right)^{1-\frac{1}{t}}.$$
\end{lemma}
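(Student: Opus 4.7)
The plan is to first absorb the signature $\sigma$ by a substitution and then split into cases according to the signs of $a$ and $b$.

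Set $\psi(x) := |x|^t\mathrm{sign}(x)$. Since $\sigma \in \{-1,+1\}$, one checks directly that $\sigma\,\psi(a) = \psi(\sigma a)$ and $|\sigma a| = |a|$. Writing $a' := \sigma a$, the inequality in the statement is therefore equivalent to its unsigned form
\begin{equation*}
|\psi(b) - \psi(a')| \geq |b-a'|\left(\frac{|b|^t+|a'|^t}{2}\right)^{1-\frac{1}{t}},
\end{equation*}
so from now on I may treat $\sigma = +1$ and rename $a'$ back to $a$.

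When $a$ and $b$ have opposite signs, say $a \leq 0 \leq b$, one has $\psi(b) - \psi(a) = |b|^t + |a|^t$ and $|b-a| = |b|+|a|$. Raising both sides of the target inequality to the $t$-th power reduces it to $(|b|+|a|)^t \leq 2^{t-1}(|b|^t+|a|^t)$, which is an immediate consequence of the convexity of $x\mapsto x^t$ on $[0,\infty)$ for $t\ge 1$ (equivalently, the power-mean inequality $\frac{|a|+|b|}{2} \leq \left(\frac{|a|^t+|b|^t}{2}\right)^{1/t}$).

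When $a$ and $b$ have the same sign, oddness of $\psi$ lets me assume $0 \leq a \leq b$, and the inequality becomes
\begin{equation*}
b^t - a^t \geq (b-a)\left(\frac{a^t+b^t}{2}\right)^{1-\frac{1}{t}}.
\end{equation*}
This is the unsigned statement handled in \cite[Lemma A.1]{Zhang} and \cite[Lemma 3]{Amghibech}. To verify it, I would normalize by setting $r := a/b \in [0,1]$ (the case $b=0$ being trivial) and reduce to the scalar inequality
\begin{equation*}
\frac{1-r^t}{1-r} \geq \left(\frac{1+r^t}{2}\right)^{(t-1)/t},
\end{equation*}
which I would prove via the integral representation $\frac{1-r^t}{1-r} = \int_0^1 t(r+s(1-r))^{t-1}\,ds$ combined with a power-mean comparison, or equivalently by checking that the ratio of the two sides is minimized only in the limit $r\to 1^-$ via a monotonicity argument.

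The principal obstacle is the same-sign case above. Elementary estimates such as $b^t - a^t \geq (b-a)^t$ or $b^t - a^t \geq ta^{t-1}(b-a)$ are too weak to capture the factor $\left(\frac{a^t+b^t}{2}\right)^{(t-1)/t}$, so one must exploit the full power-mean structure as in the cited references. Our only new contribution here is the sign reduction in the first step, which follows transparently from the identity $\sigma\psi(a) = \psi(\sigma a)$.
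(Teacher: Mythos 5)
Your proposal is correct and, at its core, leans on the same external ingredient as the paper: the unsigned inequality of \cite[Lemma A.1]{Zhang}. The paper's own proof is terser---it invokes that lemma verbatim for $\sigma=+1$ and dismisses $\sigma=-1$ with ``the proof is similar''---whereas your reduction via the oddness identity $\sigma\psi(a)=\psi(\sigma a)$ turns the $\sigma=-1$ case into an honest corollary of the $\sigma=+1$ case rather than a parallel argument to be redone; that is cleaner than what the paper writes. Your further case split is also sound: the opposite-sign case does reduce, as you say, to $(|a|+|b|)^t\le 2^{t-1}\left(|a|^t+|b|^t\right)$. Two small remarks on the same-sign case. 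First, you do not need the integral representation at all: after normalizing to $b=1$, $a=r\in[0,1)$, one has $\frac{1-r^t}{1-r}\ge 1$ (since $r^t\le r$ for $t\ge 1$) while $\left(\frac{1+r^t}{2}\right)^{(t-1)/t}\le 1$ (the base is at most $1$ and the exponent is nonnegative), so the scalar inequality is immediate. Second, your parenthetical claim that the ratio of the two sides is minimized only in the limit $r\to 1^-$ is not correct: at $r=0$ the ratio equals $2^{(t-1)/t}$, which for $t=2$ is about $1.41$ and is smaller than the limiting value $t=2$ as $r\to 1^-$. This slip does not affect the validity of your argument, since that route was offered only as an alternative and you are in any case entitled to cite the reference for this piece, exactly as the paper does.
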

\begin{proof}
When $\sigma=+1$, it is the \cite[Lemma A.1]{Zhang}. For the case of $\sigma=-1$, the proof is similar to that of  \cite[Lemma A.1]{Zhang}.
\end{proof}
\begin{lemma}\label{lemma:leq}
    For any $t\geq 1,\, a,b\in \mathbb{R},\,\sigma\in \{-1,+1\}$, we have 
    \[\left| |b|^t\mathrm{sign}(b)-\sigma|a|^t\mathrm{sign}(a)  \right|   \leq t|b-\sigma a| \left( \frac{|b|^t+|a|^t}{2} \right)^{1-\frac{1}{t}}.\]
\end{lemma}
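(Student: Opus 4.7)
The plan is to mirror the proof the authors used for Lemma \ref{lemma:geq}, namely to reduce the signed case $\sigma\in\{-1,+1\}$ to the unsigned case $\sigma=+1$, which is exactly \cite[Lemma 3]{Amghibech}. Since the two cases for $\sigma$ are treated in parallel, the entire content of the lemma is carried by a single substitution trick on top of a known inequality.

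First, I would invoke \cite[Lemma 3]{Amghibech} to settle the case $\sigma=+1$ directly: for any $t\geq 1$ and $a,b\in\mathbb{R}$,
\[
\bigl||b|^t\mathrm{sign}(b)-|a|^t\mathrm{sign}(a)\bigr|\;\leq\; t|b-a|\Bigl(\tfrac{|b|^t+|a|^t}{2}\Bigr)^{1-\frac{1}{t}}.
\]
For the case $\sigma=-1$, I would set $a':=-a$. With the convention $\mathrm{sign}(0)=0$ used in the paper, the identities $|a'|=|a|$ and $\mathrm{sign}(a')=-\mathrm{sign}(a)$ give
\[
|a'|^t\mathrm{sign}(a') \;=\; -|a|^t\mathrm{sign}(a) \;=\; \sigma\,|a|^t\mathrm{sign}(a),\qquad b-a' \;=\; b+a \;=\; b-\sigma a,\qquad |a'|^t=|a|^t.
\]
Applying the $\sigma=+1$ inequality to the pair $(b,a')$ then transforms its left and right hand sides term by term into the two sides of the desired inequality for the pair $(b,a)$ with $\sigma=-1$.

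There is really no obstacle here: the only things to check are that the substitution $a\mapsto -a$ preserves $|a|^t$ and flips $|a|^t\mathrm{sign}(a)$, both of which are immediate from the definitions. Consequently, just as in the proof of Lemma \ref{lemma:geq}, I would expect the final proof to be essentially one line stating that the $\sigma=+1$ case is \cite[Lemma 3]{Amghibech} and that the $\sigma=-1$ case follows by replacing $a$ with $-a$.
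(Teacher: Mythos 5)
Your reduction of the case $\sigma=-1$ to the case $\sigma=+1$ via the substitution $a\mapsto -a$ is correct and is essentially how the paper also disposes of the second case (it proves $\sigma=+1$ and declares $\sigma=-1$ ``similar''). The gap is in what you claim for $\sigma=+1$: you assert that this case is \emph{exactly} \cite[Lemma 3]{Amghibech}, but that citation only covers the situation $ab\geq 0$, i.e.\ when $b$ and $a$ have the same sign (this is stated explicitly in the paper's proof: ``When $ab\geq 0$, it is proved in \cite[Lemma 3]{Amghibech}. It remains to consider the situation $ab<0$.''). The remaining case $ab<0$ --- equivalently, after your substitution, the case $ab>0$ when $\sigma=-1$ --- is the substantive content of the lemma and is not a formal consequence of the cited result.

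For that missing case (say $b>0>a$, $c:=|a|$, $t>1$) the left-hand side becomes $b^t+c^t$ and $|b-a|=b+c$, and the paper proves the bound by writing $\frac{1}{t}(b^t+c^t)=\int_{-c}^{b}|x|^{t-1}\,dx$, applying the H\"older inequality to get $\left(\frac{b^{t+1}+c^{t+1}}{t+1}\right)^{1-\frac{1}{t}}(b+c)^{\frac{1}{t}}$ as an upper bound, and then invoking the elementary inequality $\frac{b^{t+1}+c^{t+1}}{t+1}\leq \frac{(b^t+c^t)(b+c)}{2}$, which itself requires a short separate verification. None of this appears in your proposal, so as written the proof is incomplete: you would need either to supply this H\"older-type argument or to verify that the cited lemma really does hold for arbitrary real $a,b$, which the paper's own treatment indicates it does not.
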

\begin{proof}
    We only present the proof of the case $\sigma=+1$. The proof of the case $\sigma=-1$ is similar.
    
Notice that the inequality is trivial for $t=1$. We assume $t>1$ in the following.
    When $ab\geq 0$, it is proved in \cite[Lemma 3]{Amghibech}. It remains to consider the situation $ab<0$. Without loss of generality, we assume $b>0>a$. Letting $c:=|a|$, we compute by H\"older inequality 
    \begin{align}
        \frac{1}{t} \left|  |b|^t\mathrm{sign}(b)-\sigma|a|^t\mathrm{sign}(a)\right|& =\frac{1}{t}\left(b^t+c^t\right)
         =\int_{-c}^b|x|^{t-1}dx \notag\\&\leq \left(\int_{-c}^b|x|^tdx\right)^{1-\frac{1}{t}}(b+c)^{\frac{1}{t}}=\left(\frac{b^{t+1}+c^{t+1}}{t+1}\right)^{1-\frac{1}{t}}(b+c)^{\frac{1}{t}}. \label{eq:Hoelder}
         \end{align}
We claim that
\begin{equation}\label{eq:tbc}\frac{b^{t+1}+c^{t+1}}{t+1}\leq \frac{\left(b^t+c^t\right)(b+c)}{2}.
\end{equation}
In fact, it is equivalent to the inequality $\frac{1}{t+1}\left(x^{t+1}+1\right)\leq \frac{1}{2}\left(x^{t}+1\right)\left(x+1\right)$ for any $x\geq 1$. The latter can be derived from the following direct computations $$\frac{t+1}{2}\left(x^t+1\right)(x+1)-x^{t+1}-1=\frac{t-1}{2}x^{t+1}+\frac{t+1}{2}x^t+\frac{t+1}{2}x+\frac{t-1}{2}\geq 0.$$
Inserting the equation (\ref{eq:tbc}) into (\ref{eq:Hoelder}), we have $$ \frac{1}{t} \left|  |b|^t\mathrm{sign}(b)-\sigma|a|^t\mathrm{sign}(a)\right|\leq \left(\frac{b^t+c^t}{2}\right)^{1-\frac{1}{t}}(b+c)=|b-\sigma a| \left( \frac{|b|^t+|a|^t}{2} \right)^{1-\frac{1}{t}}.$$
This concludes the proof.
\end{proof}
The above two inequalities leads to the following interesting monotonicity properties of Rayleigh quotients. 

\begin{lemma}\label{lemma:RayMonotonicity}
Let $\Gamma=(G,\sigma)$ be a signed graph with the potential function $\kappa \geq 0$. For $1\leq p \leq q$ and any $f\in \mathcal{S}_q$, we have 
\begin{equation}\label{eq:deRay}
    2^{-p}\mathcal{R}_p^\sigma(\phi_{q,p}(f))\geq 2^{-q}\mathcal{R}_q^\sigma(f),
\end{equation}  and 
\begin{equation}\label{eq:inRay}
    p\left(\mathcal{R}_p^\sigma(\phi_{q,p}(f))/\mathcal{D}\right)^{\frac{1}{p}}\leq q\left(\mathcal{R}_q^\sigma(f)/\mathcal{D}\right)^{\frac{1}{q}},
\end{equation}
where $\phi_{q,p}: \mathcal{S}_q\to \mathcal{S}_p$ is the map defined in (\ref{eq:phipq}) and $\mathcal{D}=\max_{1\leq i\leq n}\frac{2\kappa_i+\sum_{j\sim i}w_{ij}}{2\mu_i}$.
\end{lemma}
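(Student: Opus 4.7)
The plan is to attack the two inequalities separately, both driven by the identity $|g_i|^p = |f_i|^q$, where $g := \phi_{q,p}(f)$. Since $f \in \mathcal{S}_q$, this forces $g \in \mathcal{S}_p$, so the denominators of the two Rayleigh quotients both equal $1$, and only the numerators need to be compared.

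For (\ref{eq:deRay}), I would establish the two pointwise estimates $2^{-p}|g_i - \sigma_{ij}g_j|^p \geq 2^{-q}|f_i - \sigma_{ij}f_j|^q$ on each edge and $2^{-p}\kappa_i|g_i|^p \geq 2^{-q}\kappa_i|f_i|^q$ on each vertex, and sum. The vertex case is immediate from $|g_i|^p = |f_i|^q$, $\kappa_i \geq 0$, and $p \leq q$. For the edge case, I apply Lemma \ref{lemma:geq} with $t = q/p$, raise to the $p$-th power, and reduce the claim to
\[
\frac{|f_i - \sigma_{ij}f_j|}{2} \leq \left(\frac{|f_i|^{q/p}+|f_j|^{q/p}}{2}\right)^{p/q},
\]
which I obtain by composing the triangle inequality $|f_i - \sigma_{ij}f_j| \leq |f_i|+|f_j|$ with the power-mean inequality applied to the exponents $1$ and $q/p \geq 1$.

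For (\ref{eq:inRay}), I apply Lemma \ref{lemma:leq} with $t = q/p$ and raise to the $p$-th power to get, on each edge, the upper bound $(q/p)^p A_{ij}^{p/q} B_{ij}^{(q-p)/q}$, where I set $A_{ij} := w_{ij}|f_i - \sigma_{ij}f_j|^q$ and $B_{ij} := w_{ij}\bigl(\tfrac{|f_i|^{q/p}+|f_j|^{q/p}}{2}\bigr)^p$. For each vertex I take $A_i := B_i := \kappa_i|f_i|^q$, so that $\kappa_i|g_i|^p = A_i^{p/q}B_i^{(q-p)/q}$ holds trivially. Since $(q/p)^p \geq 1$ absorbs the missing factor on the vertex terms, H\"older's inequality with conjugate exponents $q/p$ and $q/(q-p)$ then yields
\[
\mathcal{R}_p^\sigma(g) \leq (q/p)^p \Bigl(\sum A\Bigr)^{p/q}\Bigl(\sum B\Bigr)^{(q-p)/q}.
\]
By construction $\sum A = \mathcal{R}_q^\sigma(f)$, and the estimate $\sum B \leq \mathcal{D}$ follows from the convexity bound $\bigl(\tfrac{a+b}{2}\bigr)^p \leq \tfrac{a^p+b^p}{2}$ on each $B_{ij}$, followed by grouping edge and vertex contributions by vertex and using $\sum_i \mu_i|f_i|^q = 1$ together with the definition of $\mathcal{D}$. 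Rearranging and taking $p$-th roots gives (\ref{eq:inRay}).

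The main obstacle is in the second part: the choice $A_i = B_i = \kappa_i|f_i|^q$ on vertices is not canonical, and must be made so that after convexifying each $B_{ij}$ and regrouping by vertex, the edge contribution $\tfrac12\sum_{j \sim i}w_{ij}$ and vertex contribution $\kappa_i$ combine into precisely $\tfrac{2\kappa_i+\sum_{j \sim i}w_{ij}}{2\mu_i}$; any other factorisation would either leave surplus terms or produce a constant strictly larger than $\mathcal{D}$. The asymmetry between the edge prefactor $(q/p)^p$ and the absence of such a factor on the vertex terms is the reason this choice works only once one notes $(q/p)^p \geq 1$ throughout the range $1 \leq p \leq q$.
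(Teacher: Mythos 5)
Your proposal is correct and follows essentially the same route as the paper: Lemma \ref{lemma:geq} combined with the power-mean (convexity) inequality for \eqref{eq:deRay}, and Lemma \ref{lemma:leq} followed by H\"older with conjugate exponents $q/p$ and $q/(q-p)$, the convexity bound $\left(\frac{a+b}{2}\right)^p\leq\frac{a^p+b^p}{2}$, and regrouping by vertex to reach $\mathcal{D}$ for \eqref{eq:inRay}. The only differences are organizational — you argue the first inequality pointwise per edge rather than on the summed quotient, and you make explicit the absorption of the vertex terms into the H\"older step via $(q/p)^p\geq 1$, which the paper leaves implicit.
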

\begin{proof}
Given any $f=(f_1,f_2,\ldots,f_n)\in \mathcal{S}_q$, we compute directly that
\begin{equation*}
	\begin{aligned}
		\mathcal{R}_p^{\sigma}(\phi_{q,p}(f))&=\sum_{\{i,j\}\in E}w_{ij} \left ||f_i|^{\frac{q}{p}}\mathrm{sign}(f_i)-\sigma_{ij}|f_j|^{\frac{q}{p}}\mathrm{sign}(f_j) \right|^p +\sum_{i=1}^n \kappa_i|f_i|^q \\
		&\geq \sum_{\{i,j\}\in E}w_{ij}|f_i-\sigma_{ij}f_j|^p\left| \left( \frac{|f_i|^{\frac{q}{p}}+|f_j|^{\frac{q}{p}}}{2} \right)^{1-\frac{p}{q}} \right|^p+\sum_{i=1}^n \kappa_i|f_i|^q \\
		&\geq \sum_{\{i,j\}\in E}w_{ij}|f_i-\sigma_{ij}f_j|^p\left| \left( \frac{|f_i|+|f_j|}{2} \right)^{\frac{q}{p}(1-\frac{p}{q})} \right|^p+\sum_{i=1}^n \kappa_i|f_i|^q \\
		&\geq 2^{p-q}\sum_{\{i,j\}\in E}w_{ij}|f_i-\sigma_{ij}f_j|^q +\sum_{i=1}^n \kappa_i|f_i|^q   \\
		&\geq 2^{p-q}\mathcal{R}^{\sigma}_q(f).
	\end{aligned}
\end{equation*}
In the above, we apply Lemma \ref{lemma:geq} in the first inequality, and the convexity of the function $x\mapsto x^{\frac{q}{p}}$ for $x\geq 0$ in the second inequality. This proves the inequality (\ref{eq:deRay}).   

On the other hand, by applying Lemma \ref{lemma:leq} and H\"older inequality, we calculate for any $f\in \mathcal{S}_q$ that 
\begin{equation*}
	\begin{aligned}
		\mathcal{R}_p^{\sigma}(\phi_{q,p}(f))&=\sum_{\{i,j\}\in E}w_{ij} \left ||f_i|^{\frac{q}{p}}\mathrm{sign}(f_i)-\sigma_{ij}|f_j|^{\frac{q}{p}}\mathrm{sign}(f_j) \right|^p+\sum_{i=1}^n\kappa_i|f_i|^q \\
  &\leq \sum_{\{i,j\}\in E}w_{ij}\left|\frac{q}{p}|f_i-\sigma_{ij}f_j|\left(\frac{|f_i|^{\frac{q}{p}}+|f_j|^{\frac{q}{p}}}{2}\right)^{1-\frac{p}{q}}\right|^p+\sum_{i=1}^n\kappa_i|f_i|^q\\
  & \leq\left(\frac{q}{p}\right)^{p}\left(\mathcal{R}_q^\sigma(f)\right)^{\frac{p}{q}}\left(\sum_{\{i,j\}\in E}w_{ij}\left(\frac{|f_i|^{\frac{q}{p}}+|f_j|^{\frac{q}{p}}}{2}\right)^{p\left(1-\frac{p}{q}\right)\alpha}+\sum_{i=1}^n\kappa_i|f_i|^q\right)^{\frac{1}{\alpha}},
 	\end{aligned}
\end{equation*}
where $\alpha$ stands for the H\"older conjugate of $\frac{q}{p}$, i.e., $\frac{1}{\alpha}+\frac{p}{q}=1$.
Applying the convexity of the function $x\mapsto x^p$ for $x\geq 0$, we derive that
    \begin{equation*}
	\begin{aligned}
		\mathcal{R}_p^{\sigma}(\phi_{q,p}(f))& \leq \left(\frac{q}{p}\right)^p\left(\mathcal{R}_q^\sigma(f)\right)^{\frac{p}{q}}\left(  \sum_{\{i,j\}\in E   }w_{ij}   \frac{|f_i|^q + |f_j|^q}{2}+\sum_{i=1}^n\kappa_i|f_i|^q\right)     ^{1-\frac{p}{q}} \\
        & =\left(\frac{q}{p}\right)^p\left( \mathcal{R}^{\sigma}_q(f) \right)^{\frac{p}{q}} \left( \sum_{i=1}^n\left(\frac{2\kappa_i+\sum_{j\sim i}w_{ij}}{2}\right)|f_i|^q    \right)^{1-\frac{p}{q}}  \\	
        & \leq\left(\frac{q}{p}\right)^p\left( \mathcal{R}^{\sigma}_q(f) \right)^{\frac{p}{q}} \mathcal{D}^{1-\frac{p}{q}}.
	\end{aligned}
\end{equation*}
 The last inequality above is according to $\sum_{i=1}^n \mu_i|f_i|^q=1$ and the definition of $\mathcal{D}$. This concludes the proof of (\ref{eq:inRay}).
\end{proof}
We still need the following existence result of minimizing sets.
\begin{lemma}\label{app:2}
    Let $\mathcal{R}:\mathcal{S}_p\to \mathbb{R}$ be a continuous map. Define $$a_k=\min_{B\in \mathcal{F}_k(\mathcal{S}_p)}\max_{f\in B}\mathcal{R}(f).$$  Then there always exists a minimizing set $B_0\in \mathcal{F}_k(\mathcal{S}_p)$ such that $a_k=\max_{f\in B_0}\mathcal{R}(f)$.
\end{lemma}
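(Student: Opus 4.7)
The goal is to show that the infimum in the min-max formula defining $a_k$ is attained. My plan is to extract a limit of a minimizing sequence via the Hausdorff-metric compactness of closed subsets of $\mathcal{S}_p$, and then verify that this limit still lies in the admissible class $\mathcal{F}_k(\mathcal{S}_p)$ and realizes $a_k$.

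First, I pick a minimizing sequence $\{B_m\}\subset \mathcal{F}_k(\mathcal{S}_p)$ with $\max_{f\in B_m}\mathcal{R}(f)\to a_k$. Since $\mathcal{S}_p$ is a compact subset of $\mathbb{R}^n$, the space of nonempty closed subsets of $\mathcal{S}_p$ is compact in the Hausdorff metric (Blaschke selection). So, up to a subsequence, $B_m\to B_0$ in Hausdorff distance for some nonempty closed set $B_0\subset\mathcal{S}_p$. Symmetry of $B_0$ is immediate: each $B_m=-B_m$, and the Hausdorff limit inherits this since $f\mapsto -f$ is an isometry.

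Next, the key step is to verify $\gamma(B_0)\geq k$, which is lower semicontinuity of the index along Hausdorff-convergent sequences. Since $B_0\subset \mathcal{S}_p$ and the origin is not in $\mathcal{S}_p$, we have $O\notin B_0$, so Proposition \ref{pro:index}(iv) applies: there is a neighbourhood $N$ of $B_0$ such that $\overline{N}$ is closed, symmetric, and $\gamma(\overline{N})=\gamma(B_0)$. Because $B_m\to B_0$ in Hausdorff distance and $N$ contains an open thickening of $B_0$, we have $B_m\subset \overline{N}$ for all $m$ sufficiently large. Then monotonicity of the index (Proposition \ref{pro:index}(v)) yields $k\leq \gamma(B_m)\leq \gamma(\overline{N})=\gamma(B_0)$. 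Hence $B_0\in\mathcal{F}_k(\mathcal{S}_p)$.

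Finally, I compare the max-values. Since $\mathcal{R}$ is continuous on the compact set $\mathcal{S}_p$, it is uniformly continuous, and consequently the map $B\mapsto \max_{f\in B}\mathcal{R}(f)$ is continuous with respect to the Hausdorff metric on nonempty closed subsets of $\mathcal{S}_p$. Therefore $\max_{f\in B_0}\mathcal{R}(f)=\lim_{m\to\infty}\max_{f\in B_m}\mathcal{R}(f)=a_k$, and combined with $B_0\in\mathcal{F}_k(\mathcal{S}_p)$ this forces $\max_{f\in B_0}\mathcal{R}(f)=a_k$, so $B_0$ is a minimizer.

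The delicate step is the lower semicontinuity of $\gamma$ under Hausdorff convergence; without Proposition \ref{pro:index}(iv) this could fail, since in general $\gamma$ can drop in the limit. The proposition is precisely what converts Hausdorff closeness into an index comparison via the intermediate thickened set $\overline{N}$, and this is the only nontrivial input in the argument.
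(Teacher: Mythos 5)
Your proposal is correct and follows essentially the same route as the paper: extract a Hausdorff-convergent subsequence of a minimizing sequence, use Proposition \ref{pro:index}(iv) to produce a thickened set $\overline{N}$ with the same index as the limit, apply monotonicity (v) to push $\gamma\geq k$ through the limit, and conclude via continuity of $\mathcal{R}$. Your write-up is somewhat more explicit than the paper's (noting $O\notin B_0$ so that (iv) applies, and justifying the Hausdorff-continuity of $B\mapsto\max_{f\in B}\mathcal{R}(f)$ by uniform continuity), but the argument is the same.
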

\begin{proof}
    Take $B^{(n)}\in \mathcal{F}_k(\mathcal{S}_p)$ such that $$a_k\leq \max_{f\in B^{(n)}}\mathcal{R}(f)\leq a_k+\frac{1}{n}.$$
    Define $\mathcal{F}(\mathcal{S}_p):=\{B\subset \mathcal{S}_p: B \text{ is closed and symmetric}\}$. 
    By compactness of the metric space $(\mathcal{F}(\mathcal{S}_p),d_H)$ where $d_H$ is the Hausdorff metric \cite{Munkres}, there exists $B'\in \mathcal{F}(\mathcal{S}_p)$ and a subsequence $\{B^{(n_m)}\}_{m=1}^\infty$ such that $B^{(n_m)}\stackrel{d_H}{\longrightarrow}B'$ as $m$ tends to infinity.
    By Proposition \ref{pro:index} $(iv)$, there exists a neighborhood $N$ of $B'$ such that $\gamma(B')=\gamma(\overline{N})$. 
    Notice that for $m$ large enough, we have $B^{(n_m)}\subset \overline{N}$. Due to  Proposition \ref{pro:index} $(v)$, we conclude $\gamma(\overline{N})\geq \gamma(B^{(n_m)})\geq k$. Therefore, we have $B'\in \mathcal{F}_k(\mathcal{S}_p)$, and, by continuity of $\mathcal{R}$, 
    $\max_{f\in B'}\mathcal{R}(f)=a_k$. That is,  the set $B'$ is what we need.
\end{proof}

Now, we are prepared to prove Theorem \ref{thm:Monotonicity}.

\begin{proof}[Proof of Theorem \ref{thm:Monotonicity}]
For any $k$, let $B_k\in \mathcal{F}_{k}(\mathcal{S}_p) $  be a minimizing set of $\lambda_k^{(p)}$, i.e., $$\lambda_k^{(p)}=\min_{B\in \mathcal{F}_k(\mathcal{S}_p)}\max_{f\in B}\mathcal{R}^{\sigma}_p(f)=\max_{f\in B_k}\mathcal{R}^{\sigma}_p(f) .$$
The existence of such $B_k$ is due to Lemma \ref{app:2}.
For $1<p\leq q$, let $\phi_{q,p}: \mathcal{S}_q\to \mathcal{S}_p$ be the map definied in (\ref{eq:phipq}).
 Since $\phi_{q,p}$ is an odd homeomorphism, we have $\phi_{q,p}^{-1}(B_k)=\phi_{p,q}(B_k)\in \mathcal{F}_k(\mathcal{S}_q)$. Then by the inequality (\ref{eq:deRay}) from Lemma \ref{lemma:RayMonotonicity}, we have 
\begin{equation*}
     \lambda_k^{(q)}=\min_{B\in \mathcal{F}_k(\mathcal{S}_q)}\max_{f\in B}\mathcal{R}_q^{\sigma}(f)\leq \max_{f\in \phi_{p,q}(B_k)}\mathcal{R}^{\sigma}_q(f)\leq 2^{q-p}\max_{g\in B_k}\mathcal{R}^{\sigma}_p(g)= 2^{q-p}\lambda_k^{(p)}.
\end{equation*} 
This proves the inequality (\ref{eq:de}).

Next, we prove the inequality (\ref{eq:in}). Let $B'_k\in \mathcal{F}_{k}(\mathcal{S}_q) $ be a minimizing set of $\lambda_k^{(q)}$, i.e., $$\lambda_k^{(q)}=\min_{B\in \mathcal{F}_k(\mathcal{S}_q)}\max_{f\in B}\mathcal{R}^{
\sigma}_q(f)=\max_{f\in B'_k}\mathcal{R}^{\sigma}_q(f) .$$ The existence of $B'_k$ follows from Lemma \ref{app:2}. 
Then, by the inequality \eqref{eq:inRay} in Lemma \ref{lemma:RayMonotonicity},  we obtain 
\begin{equation*}
\begin{aligned}
     \lambda_k^{(p)}&=\min_{B\in \mathcal{F}_k(\mathcal{S}_p)}\max_{f\in B}\mathcal{R}^{\sigma}_p(f)\leq \max_{f\in \phi_{q,p}(B'_k)}\mathcal{R}^{\sigma}_p(f)=\max_{f\in B'_k}\mathcal{R}^{\sigma}_p(\phi_{q,p}(f))\\
     &\leq\max_{f\in B'_k}  \left(\frac{q}{p}\right)^p\left( \mathcal{R}^{\sigma}_q(f) \right)^{\frac{p}{q}} \mathcal{D}^{1-\frac{p}{q}} = \left(\frac{q}{p}\right)^p\left( \lambda_k^{(q)} \right)^{\frac{p}{q}} \mathcal{D}^{1-\frac{p}{q}},
\end{aligned}
\end{equation*}
which implies the inequality (\ref{eq:in}).
\end{proof}




\section{A new graph invariant}
\label{graph invariant}
In this section, we define the so-called \emph{cut-off adjacency eigenvalues} of a signed graph. We show a variational characterization of those new graph invariants and a corresponding interlacing theorem. 

\begin{defn}\label{def:cutoffadj}
    Let $\Gamma=(G,\sigma)$ be a signed graph (with
a vertex measure $\mu$, an edge weight $w$ and a potential function $\kappa$) and $\lambda_k^{(p)}$ be the $k$-th variational eigenvalue of the corresponding $p$-Laplacian. Define a sequence of numbers $\left(L_1(\Gamma), L_2(\Gamma),\ldots, L_n(\Gamma)\right)$ as follows: $$\left(L_1(\Gamma), L_2(\Gamma),\ldots, L_n(\Gamma)\right):=\left(\lim_{p\to \infty}2^{-p}\lambda_1^{(p)}(\Gamma) ,\lim_{p\to \infty}2^{-p}\lambda_2^{(p)}(\Gamma),\ldots,\lim_{p\to \infty}2^{-p}\lambda_n^{(p)}(\Gamma)\right).$$  For each $1\leq i \leq n$, we call $L_i(\Gamma)$  the $i$-th  cut-off adjacency eigenvalue of the signed graph $\Gamma$.
\end{defn}
We remark that the limits in the above definition are all well-defined. When $\kappa\geq 0$, the limit $\lim_{p\to \infty}2^{-p}\lambda_k^{(p)}$ exists due to the fact that $2^{-p}\lambda_k^{(p)}$ is monotone decreasing with respect to $p$ and $2^{-p}\lambda_k^{(p)}\geq 0$. For a general potential $\kappa$, the limits are still well defined. Indeed, the sequence $\left(L_1(\Gamma), L_2(\Gamma),\ldots, L_n(\Gamma)\right)$ does not depend on the choice of potential functions. This is due to the following proposition. 

\begin{pro}\label{lemma:not depend}
Let $\Gamma=(G,\sigma)$ be a signed graph  (with a vertex measure $\mu$, an edge weight $w$ and a potential function $\kappa$) and $\lambda_k^{(p)}$ be the $k$-th variational eigenvalue of the corresponding $p$-Laplacian. Define  $\eta_k^{(p)}$ to be the $k$-th variational eigenvalue of the $p$-Laplacian with respect to $w$, $\mu$ and $\kappa'$ where $\kappa'\equiv0$. Then we have $$\eta_k^{(p)}-\mathcal{C}\leq \lambda_k^{(p)}\leq \eta_k^{(p)}+\mathcal{C},$$
    where $\mathcal{C}:=\max_{1\leq i\leq n}\left|\frac{\kappa_i}{\mu_i}\right|$.
\end{pro}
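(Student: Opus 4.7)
The plan is to reduce everything to a pointwise comparison of the two Rayleigh quotients on the sphere $\mathcal{S}_p$ and then transfer the bound to the variational eigenvalues via their common min-max characterization.

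First I would write out the two Rayleigh quotients side by side. Let
$$\RQ_p^{\sigma,\kappa}(f):=\frac{\sum_{\{i,j\}\in E} w_{ij}|f(i)-\sigma_{ij}f(j)|^p+\sum_{i=1}^n\kappa_i|f(i)|^p}{\sum_{i=1}^n\mu_i|f(i)|^p},\qquad \RQ_p^{\sigma,0}(f):=\frac{\sum_{\{i,j\}\in E} w_{ij}|f(i)-\sigma_{ij}f(j)|^p}{\sum_{i=1}^n\mu_i|f(i)|^p}.$$
Their difference is $\sum_i\kappa_i|f(i)|^p/\sum_i\mu_i|f(i)|^p$, which I would rewrite as $\sum_i(\kappa_i/\mu_i)\,\mu_i|f(i)|^p/\sum_i\mu_i|f(i)|^p$, i.e.\ a weighted average of the values $\kappa_i/\mu_i$. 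Hence for \emph{every} nonzero $f\in C(V)$,
$$\left|\RQ_p^{\sigma,\kappa}(f)-\RQ_p^{\sigma,0}(f)\right|\le \max_{1\le i\le n}\left|\frac{\kappa_i}{\mu_i}\right|=\mathcal{C}.$$
In particular this inequality holds uniformly in $p$ and uniformly over $f\in\mathcal{S}_p$.

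Next I would feed this uniform estimate into the min-max formula. For any closed symmetric subset $B\subset\mathcal{S}_p$ the pointwise bound yields
$$\max_{f\in B}\RQ_p^{\sigma,0}(f)-\mathcal{C}\le \max_{f\in B}\RQ_p^{\sigma,\kappa}(f)\le \max_{f\in B}\RQ_p^{\sigma,0}(f)+\mathcal{C}.$$
Now I would take the infimum of each side over $B\in\mathcal{F}_k(\mathcal{S}_p)$. Since $\mathcal{F}_k(\mathcal{S}_p)$ does not depend on $\kappa$ (it is defined purely from the index $\gamma$ on $\mathcal{S}_p$, which uses only $\mu$), the outer infima in the two variational formulas $\lambda_k^{(p)}$ and $\eta_k^{(p)}$ are taken over the \emph{same} family of sets. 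Hence
$$\eta_k^{(p)}-\mathcal{C}\le \lambda_k^{(p)}\le \eta_k^{(p)}+\mathcal{C},$$
which is the desired inequality.

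There is no genuine obstacle here; the only small point worth being explicit about is that the admissible class $\mathcal{F}_k(\mathcal{S}_p)$ is independent of $\kappa$, so the constant shift on the Rayleigh quotient passes cleanly through both the inner max and the outer min. Once that is observed, the proof is a one-line consequence of the bound $|\sum_i(\kappa_i/\mu_i)\,\mu_i|f(i)|^p|\le \mathcal{C}\sum_i\mu_i|f(i)|^p$.
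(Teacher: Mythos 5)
Your proof is correct and follows essentially the same route as the paper: both rest on the pointwise bound $\left|\RQ_p^{\sigma,\kappa}(f)-\RQ_p^{\sigma,0}(f)\right|\le \mathcal{C}$, obtained by viewing the difference as a weighted average of the $\kappa_i/\mu_i$, and then push it through the min-max characterization over the $\kappa$-independent family $\mathcal{F}_k(\mathcal{S}_p)$. The only (harmless) difference is that the paper instantiates explicit minimizing sets via its Lemma~\ref{app:2}, whereas you simply take the infimum of both sides of the inequality over all admissible $B$, which slightly streamlines the argument.
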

\begin{proof}
Let $\mathcal{R}_p^{\sigma}$ $\left(\text{resp., }\overline{\mathcal{R}_p^{\sigma}}\right)$ be the Rayleigh quotient of the $p$-Laplacian with respect to $w$, $\mu$ and $\kappa$ $\left(\text{resp., }w,\,\mu\text{ and }\kappa' \right)$, i.e.,  we have
\begin{equation*}
    \mathcal{R}_p^{\sigma}(g)=\frac{\sum_{\{i,j\}\in E}w_{ij}\left|g(i)-\sigma_{ij}g(j)\right|^p+\sum_{i=1}^n\kappa_i|g(i)|^p}{\sum_{i=1}^n\mu_i|g(i)|^p},
\end{equation*}
and \begin{equation*}
    \overline{\mathcal{R}_p^{\sigma}}(g)=\frac{\sum_{\{i,j\}\in E}w_{ij}\left|g(i)-\sigma_{ij}g(j)\right|^p}{\sum_{i=1}^n\mu_i|g(i)|^p},
\end{equation*}
for any non-zero function $g: V\to \mathbb{R}$. Observe that 
\[\overline{\mathcal{R}_p^{\sigma}}(g)-\mathcal{C}\leq \mathcal{R}_p^{\sigma}(g)\leq \overline{\mathcal{R}_p^{\sigma}}(g)+\mathcal{C}.\]
Let $B_k\in \mathcal{F}_k(\mathcal{S}_p)$ be a set such that $\lambda_k^{(p)}=\max_{g\in B_k}\mathcal{R}_p^{\sigma}(g)$.  The existence of $B_k$ is guaranteed by Lemma \ref{app:2}. By definition, we compute 
\begin{equation*}
    \begin{aligned}
        \lambda_k^{(p)}&=\max_{g\in B_k}\mathcal{R}_p^{\sigma}(g)\geq \max_{g\in B_k}\overline{\mathcal{R}_p^{\sigma}}(g)-\mathcal{C}\geq \min_{B\in\mathcal{F}_k\left(\mathcal{S}_p\right)}\max_{g\in B}\overline{\mathcal{R}^{\sigma}_p}(g)-\mathcal{C}=\eta_k^{(p)}-\mathcal{C}.
    \end{aligned}
\end{equation*}
Next, we take a set $B'_k\in \mathcal{F}_k(\mathcal{S}_p)$ such that $\eta_k^{(p)}=\max_{g\in B'_k}\overline{\mathcal{R}_p^{\sigma}}(g)$. The existence of such $B'_k$ is again due to Lemma \ref{app:2}.  We compute
\begin{equation*}
\begin{aligned}
    \lambda_k^{(p)}&=\min_{B\in \mathcal{F}_k\left(\mathcal{S}_p\right)}\max_{g\in B}\mathcal{R}_p^{\sigma}(g)\leq \min_{B\in \mathcal{F}_k(\mathcal{S}_p)} \max_{g\in B}\overline{\mathcal{R}_p^{\sigma}}(g)+\mathcal{C}\leq \max_{g\in B'_k}\overline{\mathcal{R}^{\sigma}_p}(g)+\mathcal{C}=\eta_k^{(p)}+\mathcal{C}.
\end{aligned}
\end{equation*}
This completes the proof.
\end{proof}
By the above proposition, we have $$\lim_{p\to \infty}2^{-p}\eta_k^{(p)}=\lim_{p\to \infty}2^{-p}\lambda_k^{(p)},$$ for any $k$. This shows that our invariants do not depend on the choice of potential functions. For convenience, we can always assume $\kappa\equiv0$ in the definition of $(L_1(\Gamma),L_2(\Gamma),\ldots,L_n(\Gamma))$.

Whenever it is clear from the context, we write $L_k$ instead of $L_k(\Gamma)$ for short.
Observe that $L_1\leq L_2\leq \cdots\leq L_n$ since $\lambda_1^{(p)}\leq\lambda_2^{(p)} \cdots\leq \lambda_{n}^{(p)}$.

\begin{pro}\label{pro:switching-spectra}
For any two switching equivalent signed graphs 
$\Gamma=(G,\sigma)$ and $\Tilde{\Gamma}=(G,\tilde{\sigma})$, we have $(L_1(\Tilde{\Gamma}),L_2(\Tilde{\Gamma}),\ldots,L_n(\Tilde{\Gamma}))=(L_1(\Gamma),L_2(\Gamma),\ldots,L_n(\Gamma))$.
\end{pro}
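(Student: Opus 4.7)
The plan is to reduce this directly to the switching invariance of the variational eigenvalues (Proposition \ref{pro:GLZswitching}) and take limits. Concretely, since $\Gamma$ and $\tilde{\Gamma}$ share the same underlying graph $G$ together with the same vertex measure $\mu$, edge weight $w$, and potential $\kappa$, and since $\tilde{\sigma} = \sigma^{\tau}$ for some switching function $\tau$, Proposition \ref{pro:GLZswitching} gives
\[
\lambda_k^{(p)}(\Gamma) \;=\; \lambda_k^{(p)}(\tilde{\Gamma}) \qquad \text{for every } 1 \le k \le n \text{ and every } p > 1.
\]

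From here I would multiply both sides by $2^{-p}$ and pass to the limit as $p \to \infty$. Each of these limits exists by the discussion immediately following Definition \ref{def:cutoffadj}: the nonnegative-potential case uses the monotonicity $2^{-p}\lambda_k^{(p)} \searrow$ from Theorem \ref{thm:Monotonicity}, and the general-potential case is reduced to the nonnegative-potential case by Proposition \ref{lemma:not depend}, which shows the limit is independent of $\kappa$. Since the equality holds for all $p$, the two limits coincide, so
\[
L_k(\Gamma) \;=\; \lim_{p\to\infty} 2^{-p}\lambda_k^{(p)}(\Gamma) \;=\; \lim_{p\to\infty} 2^{-p}\lambda_k^{(p)}(\tilde{\Gamma}) \;=\; L_k(\tilde{\Gamma})
\]
for every $k$, which is exactly the desired conclusion.

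There is no real obstacle here; the entire content of the proposition is packaged into Proposition \ref{pro:GLZswitching}, and the only mild subtlety is making sure the limits defining $L_k$ are well-defined for both signatures before taking the limit. Since switching does not alter $G$, $\mu$, $w$, or $\kappa$ (only the signature is modified), the well-definedness justification applies verbatim to both $\Gamma$ and $\tilde{\Gamma}$, so the argument is a one-line application of Proposition \ref{pro:GLZswitching} followed by a limit.
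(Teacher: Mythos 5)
Your proposal is correct and follows exactly the paper's route: the paper likewise derives the statement directly from the switching invariance of the variational eigenvalues in Proposition \ref{pro:GLZswitching} and then passes to the limit defining $L_k$. Your additional remarks on the well-definedness of the limits are a harmless elaboration of the discussion already given after Definition \ref{def:cutoffadj}.
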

\begin{proof}
    This proposition follows straightforwardly from the switching invariant property of variational eigenvalues stated in Proposition \ref{pro:GLZswitching}.
\end{proof}
Next, we present a variational characterization of the cut-off adjacency eigenvalues.

\begin{theorem}\label{thm:varitional}
   Let $\Gamma=(G,\sigma)$ be a signed graph with $G=(V,E)$. For each $1\leq k\leq n$, we have 
   \[L_k(\Gamma)=\min_{B\in \mathcal{F}_k(\mathcal{S}_q)}\max_{g\in B}\mathcal{R}^{\sigma}_{q,\infty}(g),\,\,\,\text{for any}\,\,q\geq 1,\]
where \[\mathcal{R}^{\sigma}_{q,\infty}(g):=\frac{\sum_{\{i, j\}\in E} w_{ij} \left(\max\{-g(i)\sigma_{ij}g(j), 0\}\right)^{\frac{q}{2}}}{\sum_{i=1}^n\mu_i\left|g(i)\right|^q},\]
   for any non-zero function $g: V\to \mathbb{R}$. 
\end{theorem}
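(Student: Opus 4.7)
The plan is to first reduce to the case $\kappa\equiv 0$, which is justified by Proposition \ref{lemma:not depend} since the invariants $L_k$ are independent of the potential. Writing $\ell_k$ for the right-hand side of the claimed identity, my strategy centers on the odd homeomorphism $\phi_{q,p}:\mathcal{S}_q \to \mathcal{S}_p$ from (\ref{eq:phipq}), which transports $\mathcal{F}_k(\mathcal{S}_q)$ bijectively onto $\mathcal{F}_k(\mathcal{S}_p)$. The key analytic input is a two-sided comparison between $2^{-p}\mathcal{R}_p^{\sigma}(\phi_{q,p}(f))$ and $\mathcal{R}^{\sigma}_{q,\infty}(f)$: a one-sided inequality that holds for every finite $p$, and pointwise equality in the limit $p\to\infty$.

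The main edgewise estimate I will establish is
\[
2^{-p}|g(i)-\sigma_{ij}g(j)|^{p} \;\geq\; (\max\{-f(i)\sigma_{ij}f(j),\,0\})^{q/2}, \qquad g=\phi_{q,p}(f).
\]
This splits according to the sign of $f(i)\sigma_{ij}f(j)$: when it is negative, $g(i)$ and $\sigma_{ij}g(j)$ have opposite signs, so $|g(i)-\sigma_{ij}g(j)|=|f(i)|^{q/p}+|f(j)|^{q/p}$, and AM-GM gives $(|f(i)|^{q/p}+|f(j)|^{q/p})^{p}\geq 2^{p}(|f(i)||f(j)|)^{q/2}$, matching the right-hand side; when it is nonnegative the right-hand side vanishes. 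Summing over $E$ and noting $\sum_i\mu_i|g(i)|^{p}=\sum_i\mu_i|f(i)|^{q}=1$ yields $2^{-p}\mathcal{R}_p^{\sigma}(\phi_{q,p}(f))\geq \mathcal{R}^{\sigma}_{q,\infty}(f)$ for every $f\in\mathcal{S}_q$. A Taylor expansion $|f(i)|^{q/p}=1+(q/p)\ln|f(i)|+O(1/p^{2})$ then shows the AM-GM step is asymptotically sharp, so the pointwise limit $2^{-p}\mathcal{R}_p^{\sigma}(\phi_{q,p}(f))\to \mathcal{R}^{\sigma}_{q,\infty}(f)$ holds on all of $\mathcal{S}_q$, the degenerate case of some $f(i)=0$ being immediate with both sides tending to zero.

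For $L_k\geq\ell_k$: given $B\in\mathcal{F}_k(\mathcal{S}_p)$, the set $\phi_{p,q}(B)\in\mathcal{F}_k(\mathcal{S}_q)$ is admissible, and the edgewise inequality gives $2^{-p}\max_{h\in B}\mathcal{R}_p^{\sigma}(h)=\max_{f\in\phi_{p,q}(B)}2^{-p}\mathcal{R}_p^{\sigma}(\phi_{q,p}(f))\geq\max_{f\in\phi_{p,q}(B)}\mathcal{R}^{\sigma}_{q,\infty}(f)\geq\ell_k$; taking the infimum over $B$ gives $2^{-p}\lambda_k^{(p)}\geq\ell_k$, and sending $p\to\infty$ yields $L_k\geq\ell_k$. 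Conversely, for $L_k\leq\ell_k$ I take any $B\in\mathcal{F}_k(\mathcal{S}_q)$, transport it to $\phi_{q,p}(B)\in\mathcal{F}_k(\mathcal{S}_p)$, and obtain $2^{-p}\lambda_k^{(p)}\leq\max_{f\in B}2^{-p}\mathcal{R}_p^{\sigma}(\phi_{q,p}(f))$.

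The main obstacle is exchanging the limit $p\to\infty$ with the $\max$ over $B$ in the last inequality. I will handle this by exploiting the crucial monotonicity that $p\mapsto 2^{-p}\mathcal{R}_p^{\sigma}(\phi_{q,p}(f))$ is nonincreasing for $p\geq q$ — a consequence of Lemma \ref{lemma:RayMonotonicity} combined with the semigroup identity $\phi_{q,r}=\phi_{p,r}\circ\phi_{q,p}$ for $q\leq p\leq r$. Since $\mathcal{R}^{\sigma}_{q,\infty}$ is continuous on the compact set $B$, Dini's theorem upgrades pointwise to uniform convergence on $B$, so the limit passes inside the max; taking the infimum over $B\in\mathcal{F}_k(\mathcal{S}_q)$ completes the proof of $L_k\leq\ell_k$.
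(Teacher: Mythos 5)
Your proof is correct. The $L_k\le\ell_k$ half is essentially the paper's argument: push a test set $B\subset\mathcal{S}_q$ forward through $\phi_{q,p}$, and use the monotonicity of $p\mapsto 2^{-p}\mathcal{R}_p^\sigma(\phi_{q,p}(\cdot))$ from Lemma \ref{app:1} together with Dini's theorem to turn pointwise into uniform convergence on the compact set, so the limit passes through the max. Where you genuinely diverge is the $L_k\ge\ell_k$ half. The paper pulls back minimizing sets $B^{(p)}$ of $\lambda_k^{(p)}$ to $C^{(p)}=\phi_{p,q}(B^{(p)})\subset\mathcal{S}_q$, invokes compactness of the family of closed symmetric subsets in the Hausdorff metric to extract a limit set $C'$, verifies $\gamma(C')\ge k$ via Proposition \ref{pro:index} (iv)--(v), and only then passes to the limit using uniform convergence and uniform continuity. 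You instead prove the finite-$p$ edgewise bound $2^{-p}|g(i)-\sigma_{ij}g(j)|^p\ge\left(\max\{-f(i)\sigma_{ij}f(j),0\}\right)^{q/2}$ by AM--GM with $g=\phi_{q,p}(f)$, which yields $2^{-p}\lambda_k^{(p)}\ge\ell_k$ for \emph{every} finite $p$ and hence $L_k\ge\ell_k$ by letting $p\to\infty$; this bypasses the Hausdorff-metric compactness and the semicontinuity of the genus entirely and is the more elementary route. (Your inequality is in fact also derivable from the paper's own ingredients: a nonincreasing function dominates its limit, so Lemma \ref{app:1} plus the pointwise limit already gives $2^{-p}\mathcal{R}_p^\sigma(\phi_{q,p}(f))\ge\mathcal{R}^\sigma_{q,\infty}(f)$; your direct AM--GM derivation makes this self-contained.) Your explicit reduction to $\kappa\equiv 0$ via Proposition \ref{lemma:not depend} is also slightly more careful than the paper, whose appeal to Lemma \ref{app:1} tacitly requires $\kappa\ge 0$.
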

A particular convenient case of the above variatioinal characterization is provided by setting $q=2$. It is worth comparing the quotient 
\[\mathcal{R}^{\sigma}_{2,\infty}(g):=\frac{\sum_{\{i, j\}\in E} w_{ij} \max\left\{-g(i)\sigma_{ij}g(j), 0\right\}}{\sum_{i=1}^n\mu_ig(i)^2}\]
with the Rayleigh quotient of the normalized adjacency matrix $A^\mu_{-\Gamma}$ of the signed graph $-\Gamma$:
\begin{equation*}
   \mathcal{R}_{A^\mu_{-\Gamma}}(g)= \frac{\langle g, A^\mu_{-\Gamma}g\rangle_\mu}{\langle g,g\rangle_{\mu}}=\frac{-\sum_{\{i, j\}\in E}w_{ij}g(i)\sigma_{ij}g(j)}{\sum_{i=1}^n\mu_ig(i)^2}.
\end{equation*}
This suggests the name \emph{cut-off adjacency eigenvalues} for $L_k(\Gamma)$'s. More connections between the cut-off adjacency eigenvalues $(L_1(\Gamma), L_2(\Gamma),\ldots,L_n(\Gamma))$ and the eigenvalues of $A^\mu_{-\Gamma}$ will be discussed  in Section \ref{Lower bound}. 

We further remark that the quotient $\mathcal{R}^{\sigma}_{2,\infty}$ can be  viewed more or less as a vector-dependent quadratic form in some sense. 
Then it intuitively corresponds to an eigenvector-dependent eigenvalue problem, although the term `eigenvector-dependent nonlinear eigenvalue problem' has been used with totally different formulations,  purposes and  motivations in the literature (see e.g., \cite{CZBL18,TL21}).

To prove Theorem \ref{thm:varitional}, we first observe the following reformulation of (\ref{eq:deRay}) from Lemma \ref{lemma:RayMonotonicity}.
\begin{lemma}\label{app:1}
    Let $\Gamma=(G,\sigma)$ be a signed graph and $q\geq 1$ be a given number. For any $g\in \mathcal{S}_q$, the function \[p\mapsto 2^{-p}\mathcal{R}_p^\sigma(\phi_{q,p}(g))\] is monotonically non-increasing on $[1,\infty)$.   
\end{lemma}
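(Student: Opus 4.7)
The plan is to derive this monotonicity as a direct consequence of inequality (\ref{eq:deRay}) from Lemma \ref{lemma:RayMonotonicity}, by exploiting the semigroup-like composition identity
$$\phi_{b,c}\circ \phi_{a,b}=\phi_{a,c}$$
for the family of maps defined in (\ref{eq:phipq}). This identity is immediate from a pointwise calculation: since $\phi_{a,b}$ raises the absolute value of each coordinate to the power $a/b$ and preserves the sign, composing $\phi_{a,b}$ with $\phi_{b,c}$ raises the absolute value to $(a/b)(b/c)=a/c$ while again preserving the sign, which is precisely $\phi_{a,c}$.

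Concretely, I would fix $1\le p_1\le p_2<\infty$ and $g\in \mathcal{S}_q$, and set $f:=\phi_{q,p_2}(g)$. A short unit-norm check shows $f\in \mathcal{S}_{p_2}$, since $\sum_{i}\mu_i|f(i)|^{p_2}=\sum_{i}\mu_i|g(i)|^q=1$. Applying Lemma \ref{lemma:RayMonotonicity} with the roles of $(p,q)$ now played by $(p_1,p_2)$ and with test function $f$, inequality (\ref{eq:deRay}) gives
$$2^{-p_1}\mathcal{R}_{p_1}^\sigma(\phi_{p_2,p_1}(f))\ge 2^{-p_2}\mathcal{R}_{p_2}^\sigma(f).$$
Using the composition identity, $\phi_{p_2,p_1}(f)=\phi_{p_2,p_1}(\phi_{q,p_2}(g))=\phi_{q,p_1}(g)$, while by construction $\mathcal{R}_{p_2}^\sigma(f)=\mathcal{R}_{p_2}^\sigma(\phi_{q,p_2}(g))$. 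Substituting yields
$$2^{-p_1}\mathcal{R}_{p_1}^\sigma(\phi_{q,p_1}(g))\ge 2^{-p_2}\mathcal{R}_{p_2}^\sigma(\phi_{q,p_2}(g)),$$
which is exactly the claimed monotonicity on $[1,\infty)$.

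There is no serious obstacle here: the lemma is essentially a convenient repackaging of (\ref{eq:deRay}) along the curve $p\mapsto \phi_{q,p}(g)$ through the fixed vector $g$. The only minor point that requires care is that Lemma \ref{lemma:RayMonotonicity} is stated under the hypothesis $\kappa\ge 0$; this is not restrictive in the present section, because the remark following Proposition \ref{lemma:not depend} lets us assume $\kappa\equiv 0$ without loss of generality when studying the cut-off adjacency eigenvalues, so the hypothesis is automatically satisfied in every subsequent invocation of this lemma (in particular in the proof of Theorem \ref{thm:varitional}).
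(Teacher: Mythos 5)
Your proposal is correct and follows essentially the same route as the paper: both proofs set $f=\phi_{q,p_2}(g)\in\mathcal{S}_{p_2}$, invoke inequality (\ref{eq:deRay}) of Lemma \ref{lemma:RayMonotonicity} with the pair $(p_1,p_2)$, and conclude via the composition identity $\phi_{p_2,p_1}\circ\phi_{q,p_2}=\phi_{q,p_1}$. Your additional remark on the hypothesis $\kappa\ge 0$ is a sensible clarification that the paper leaves implicit.
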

\begin{proof}
For any $g\in S_q$ and $1\leq p_1\leq p_2$, we have by definition $\phi_{q,p_2}(g)\in \mathcal{S}_{p_2}$ and $\phi_{p_2,p_1}\circ\phi_{q,p_2}=\phi_{q,p_1}$. Then, we derive by Lemma \ref{lemma:RayMonotonicity} that
\[ 2^{-p_2}\mathcal{R}_{p_2}^\sigma\left(\phi_{q,p_2}(g)\right)\leq 2^{-p_1}\mathcal{R}_{p_1}^\sigma\left(\phi_{p_2,p_1}\circ\phi_{q,p_2}(g)\right)=2^{-p_1}\mathcal{R}_{p_1}^\sigma\left(\phi_{q,p_1}(g)\right).\]
This completes the proof.
\end{proof}

\begin{proof}[Proof of Theorem \ref{thm:varitional}]
   For any $q\geq 1$ and any $1\leq k\leq n$, define  $$a_k:=\min_{B\in \mathcal{F}_k(\mathcal{S}_q)}\max_{g\in B}\mathcal{R}^{\sigma}_{q,\infty}(g).$$ 
For any $g\in \mathcal{S}_q$,  a direct computation gives  $\lim_{p\to \infty}2^{-p}\mathcal{R}^{\sigma}_p\left(\phi_{q,p}(g)\right)=\mathcal{R}^{\sigma}_{q,\infty}(g)$. Indeed, we have for each edge $\{i,j\}\in E$
\[\lim_{p\to\infty}2^{-p}\left||g(i)|^{\frac{q}{p}}\mathrm{sign}(g(i))-\sigma_{ij}|g(j)|^{\frac{q}{p}}\mathrm{sign}(g(j))\right|^p=\left(\max\{-g(i)\sigma_{ij}g(j), 0\}\right)^{\frac{q}{2}}\]
by observing the facts that
\[\lim_{p\to\infty}2^{-p}\left||g(i)|^{\frac{q}{p}}-|g(j)|^{\frac{q}{p}}\right|^p=0,\,\,
\text{and}\,\, \lim_{p\to\infty}2^{-p}\left||g(i)|^{\frac{q}{p}}+|g(j)|^{\frac{q}{p}}\right|^p=|g(i)g(j)|^{\frac{q}{2}}.\]
Therefore, the functions $2^{-p}\mathcal{R}^{\sigma}_p\left(\phi_{q,p}\right): \mathcal{S}_q\to \mathbb{R}$ converges \emph{pointwisely} to $\mathcal{R}_{q,\infty}^\sigma$ as $p$ tends to infinity. Noticing further that $2^{-p}\mathcal{R}^{\sigma}_p\left(\phi_{q,p}\right)$ is monotonically non-increasing with respect to $p$ by Lemma \ref{app:1}, and all the functions $2^{-p}\mathcal{R}^{\sigma}_p\left(\phi_{q,p}\right),\mathcal{R}_{q,\infty}^\sigma: \mathcal{S}_q\to \mathbb{R}$ are continuous, we conclude that $2^{-p}\mathcal{R}^{\sigma}_p\left(\phi_{q,p}\right): \mathcal{S}_q\to \mathbb{R}$ converges \emph{uniformly} to $\mathcal{R}_{q,\infty}^\sigma$ as $p$ tends to infinity. 
   
We first show that $L_k\leq a_k$. Let $B_0\in \mathcal{F}_k(\mathcal{S}_q)$ be a minimizing set such that $a_k=\max_{g\in B_0}\mathcal{R}^{\sigma}_{q,\infty}(g)$. The existence of such $B_0$ is shown in Lemma \ref{app:2}. By definition, we have $\phi_{q,p}(B_0)\in \mathcal{F}_k\left(\mathcal{S}_p\right)$ and  
\begin{equation*}
\begin{aligned}
    L_k&=\lim_{p\to\infty}2^{-p}\lambda^{(p)}_k=\lim_{p\to \infty}2^{-p} \min_{B\in \mathcal{F}_k(\mathcal{S}_p)}\max_{g\in B}\mathcal{R}^{\sigma}_{p}(g)\\
    &\leq\liminf_{p\to \infty}\max_{g\in \phi_{q,p}\left(B_0\right)}2^{-p}\mathcal{R}^{\sigma}_{p}(g)\\
     &=\liminf_{p\to \infty}\max_{g\in B_0}2^{-p}\mathcal{R}^{\sigma}_{p}\left(\phi_{q,p}(g) \right)\\
     &=\max_{g\in B_0}\mathcal{R}^{\sigma}_{q,\infty}(g).
\end{aligned}        
\end{equation*}
The last equality is because of the uniform convergence of $2^{-p}\mathcal{R}^{\sigma}_p(\phi_{q,p})$. This proves  $L_k\leq a_k$.

To prove $L_k\geq a_k$, let $B^{(p)}\in \mathcal{F}_k(\mathcal{S}_p)$ be a minimizing set of $\lambda_k^{(p)}$, i.e., $\lambda_k^{(p)}=\max_{g\in B^{(p)}}\mathcal{R}^{\sigma}_p(g)$. Let $C^{(p)}=\phi_{p,q}\left(B^{(p)}\right)\in\mathcal{F}_k(\mathcal{S}_q)$. Define $\mathcal{F}(\mathcal{S}_q):=\{B\subset \mathcal{S}_q: B\,\,\text{is closed and symmetric}\}$. By the compactness of $\left(\mathcal{F}(\mathcal{S}_q), d_H\right)$ with $d_H$ being the Hausdorff metric, there exists a subsequence $\{C^{(p_m)}\}_{m=1}^{\infty}$ and a set $C'\in \mathcal{F}(\mathcal{S}_q)$ such that $C^{(p_m)}\stackrel{d_H}{\longrightarrow}C'$ as $m$ tends to infinity. Employing a similar argument as in the proof of Lemma \ref{app:2}, we obtain $\gamma(C')\geq k$. Then we compute
\begin{equation*}
    \begin{aligned}
        L_k&=\lim_{p\to \infty}2^{-p}\lambda_k^{(p)}
        =\lim_{m\to \infty}2^{-p_m}\max_{g\in B^{(p_m)}}\mathcal{R}^{\sigma}_{p_m}(g)\\
        &=\lim_{m\to \infty}\max_{g\in C^{(p_m)}}2^{-p_m}\mathcal{R}^{\sigma}_{p_m}\left(\phi_{q,p_m}(g)\right)\\
        &=\max_{g\in C'}\mathcal{R}^{\sigma}_{q,\infty}(g)\geq a_k.
    \end{aligned}
\end{equation*}
The last equality above is because of the uniform convergence of $2^{-p}\mathcal{R}^{\sigma}_p(\phi_{q,p})$ and the uniform continuity of $\mathcal{R}^{\sigma}_{q,\infty}$. This concludes the proof.
\end{proof}
To conclude this section, we prove an interlacing theorem of the cut-off adjacency eigenvalues.
\begin{theorem}\label{thm:Interlacing}
    Let $\Gamma=(G,\sigma)$ be a signed graph with $G=(V,E)$. Let $\Gamma'=(G',\sigma)$ be a signed graph where $G'=(V',E')$ is an induced subgraph of $G$ obtained by removing $m$ vertices from $V$ with $1\leq m\leq n-1$. Then we have $$L_k(\Gamma)\leq L_k(\Gamma')\leq L_{k+m}(\Gamma),$$ for any $1\leq k\leq n-m$.
\end{theorem}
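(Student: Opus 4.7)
The plan is to work entirely at the variational level provided by Theorem \ref{thm:varitional}, which lets us fix any convenient $q\ge 1$ (say $q=2$) and write
\[
L_k(\Gamma)=\min_{B\in\mathcal{F}_k(\mathcal{S}_q(V))}\max_{g\in B}\mathcal{R}^{\sigma}_{q,\infty}(g),\qquad L_k(\Gamma')=\min_{B\in\mathcal{F}_k(\mathcal{S}_q(V'))}\max_{g\in B}\mathcal{R}'^{\sigma}_{q,\infty}(g),
\]
where $\mathcal{R}'^{\sigma}_{q,\infty}$ is the same quotient as $\mathcal{R}^{\sigma}_{q,\infty}$, but with sums running over $E'$ and $V'$. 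The key bookkeeping remark, used in both directions, is that if $g\in\mathcal{S}_q(V)$ vanishes on $V\setminus V'$, then, since $G'$ is \emph{induced}, every edge of $E\setminus E'$ has at least one endpoint on which $g$ vanishes, so that edge contributes $0$ to the numerator of $\mathcal{R}^{\sigma}_{q,\infty}(g)$; consequently $\mathcal{R}^{\sigma}_{q,\infty}(g)=\mathcal{R}'^{\sigma}_{q,\infty}(g|_{V'})$ and, simultaneously, $g|_{V'}\in\mathcal{S}_q(V')$.

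For the lower inequality $L_k(\Gamma)\le L_k(\Gamma')$, I would pick a minimizer $B'\in\mathcal{F}_k(\mathcal{S}_q(V'))$ for $L_k(\Gamma')$ (which exists by Lemma \ref{app:2}) and consider the zero-extension map $\iota:\mathcal{S}_q(V')\to\mathcal{S}_q(V)$. This map is an odd homeomorphism onto its image, so $\iota(B')$ is closed, symmetric and has genus $\ge k$, i.e.\ $\iota(B')\in\mathcal{F}_k(\mathcal{S}_q(V))$. By the bookkeeping remark, $\mathcal{R}^{\sigma}_{q,\infty}\circ\iota=\mathcal{R}'^{\sigma}_{q,\infty}$ on $B'$, hence
\[
L_k(\Gamma)\le \max_{g\in\iota(B')}\mathcal{R}^{\sigma}_{q,\infty}(g)=\max_{g\in B'}\mathcal{R}'^{\sigma}_{q,\infty}(g)=L_k(\Gamma').
\]

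The upper inequality $L_k(\Gamma')\le L_{k+m}(\Gamma)$ is the part requiring the genus machinery, and this is the step I expect to be the main obstacle. The idea is, up to relabeling, to let $V\setminus V'=\{i_1,\dots,i_m\}$ and, after choosing a minimizer $B\in\mathcal{F}_{k+m}(\mathcal{S}_q(V))$ for $L_{k+m}(\Gamma)$ via Lemma \ref{app:2}, to consider the odd continuous evaluation map
\[
H:B\longrightarrow\mathbb{R}^m,\qquad H(g):=\bigl(g(i_1),\dots,g(i_m)\bigr).
\]
Since $\gamma(B)\ge k+m>m$, Proposition \ref{pro:index}$(iii)$ yields $\gamma(H^{-1}(0))\ge \gamma(B)-m\ge k$. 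The set $H^{-1}(0)\subset B$ consists precisely of those $g\in\mathcal{S}_q(V)$ that vanish on $V\setminus V'$; restriction to $V'$ then gives an odd homeomorphism onto a closed symmetric subset $C\subset\mathcal{S}_q(V')$ with $\gamma(C)\ge k$, so $C\in\mathcal{F}_k(\mathcal{S}_q(V'))$. Applying the bookkeeping remark once more,
\[
L_k(\Gamma')\le\max_{h\in C}\mathcal{R}'^{\sigma}_{q,\infty}(h)=\max_{g\in H^{-1}(0)}\mathcal{R}^{\sigma}_{q,\infty}(g)\le\max_{g\in B}\mathcal{R}^{\sigma}_{q,\infty}(g)=L_{k+m}(\Gamma),
\]
which completes the proof. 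The nontrivial point is really the use of Proposition \ref{pro:index}$(iii)$ to obtain the dimension drop by exactly $m$; the rest amounts to checking that neither the $\ell^q(\mu)$-normalization nor the numerator $\sum_{\{i,j\}\in E}w_{ij}\bigl(\max\{-g(i)\sigma_{ij}g(j),0\}\bigr)^{q/2}$ sees the vertices that have been removed, which is automatic for \emph{induced} subgraphs and functions supported on $V'$.
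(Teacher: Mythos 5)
Your proof is correct and follows essentially the same strategy as the paper's: zero-extension of a minimizing set for the inequality $L_k(\Gamma)\leq L_k(\Gamma')$, and the genus-drop property of Proposition \ref{pro:index}$(iii)$ applied to the evaluation map at the removed vertices for the inequality $L_k(\Gamma')\leq L_{k+m}(\Gamma)$. The only (harmless) difference is that you treat general $m$ in one step with $H:B\to\mathbb{R}^m$, whereas the paper proves the case $m=1$ and iterates.
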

\begin{proof}
We only need prove the case of $m=1$.
 For $m>1$, the inequality is derived by applying recursively  the result for $m=1$.

 Suppose that $V=\{1,\ldots, n-1, n\}$ and $V'=\{1,\ldots,n-1\}=V\setminus\{n\}$.
Let $\mathcal{R}_{2,\infty}^{\sigma}$ and $\overline{\mathcal{R}_{2,\infty}^{\sigma}}$ be the cut-off Rayleigh-type quotients of $\Gamma$ and $\Gamma'$, respectively, i.e., $$\mathcal{R}^{\sigma}_{2,\infty}(g)=\sum_{\{i, j\}\in E}  \max\left\{-w_{ij}g(i)\sigma_{ij}g(j), 0\right\}, \qquad\text{for any } g\in \mathcal{S}_2(V),$$
    and 
$$\overline{\mathcal{R}^{\sigma}_{2,\infty}}(g)=\sum_{\{i, j\}\in E'}  \max\left\{-w_{ij}g(i)\sigma_{ij}g(j), 0\right\}, \qquad\text{for any } g\in \mathcal{S}_2(V').$$
      
We first prove $L_k(\Gamma)\leq L_k(\Gamma')$. Let $B'_k\in \mathcal{F}_k(\mathcal{S}_2(V'))$ be a minimizing set corresponding to $L_k(\Gamma')$, i.e., $L_k(\Gamma')=\max_{g\in B'_k}\overline{\mathcal{R}_{2,\infty}^{\sigma}}(g)$. The existence of $B'_k$ follows from  Lemma \ref{app:2}. Define the map $\psi:\mathcal{S}_2(V')\to \mathcal{S}_2(V)$ as follows
      $$\psi(f_1, f_2\ldots, f_{n-1}):= (f_1, f_2\ldots, f_{n-1},0).$$
By a direct computation, we have $\mathcal{R}^{\sigma}_{2,\infty}\left(\psi(g)\right)= \overline{\mathcal{R}^{\sigma}_{2,\infty}}(g)$ for any $g\in \mathcal{S}_2(V')$. Observing that $\psi(B'_k)\in \mathcal{F}_k(\mathcal{S}_2(V))$,  we have by Theorem \ref{thm:varitional},
\begin{equation*}
  L_k(\Gamma)=\min_{B\in \mathcal{F}_k\left(\mathcal{S}_2(V)\right)}\max_{g\in B}\mathcal{R}_{2,\infty}^{\sigma}(g)\leq \max_{g\in \psi(B'_k)}\mathcal{R}_{2,\infty}^{\sigma}(g)= \max_{g\in B'_k}\mathcal{R}_{2,\infty}^{\sigma}\left(\psi(g)\right)=\max_{g\in B'_k}\overline{\mathcal{R}_{2,\infty}^{\sigma}}(g)=L_k(\Gamma'). 
\end{equation*}
This proves $L_k\leq L'_k$.

Next, we prove $L'_k\leq L_{k+1}$. Let $B_{k+1}\in \mathcal{F}_{k+1}(\mathcal{S}_2(V))$ be a minimizing set of $L_{k+1}(\Gamma)$, i.e., $L_{k+1}(\Gamma)=\max_{g\in B_{k+1}}\mathcal{R}_{2,\infty}^{\sigma}(g)$. The existence of $B_{k+1}$ is due to  Lemma \ref{app:2}.
We define the function $H:B_{k+1}\to \mathbb{R}$ as follows 
 $$H(f_1, f_2\ldots, f_{n-1},f_n):= f_n.$$ By Proposition \ref{pro:index} $(iii)$, we have $\gamma(H^{-1}(0))\geq k$.
 Moreover, we define  $\psi_0:H^{-1}(0)\to \mathcal{S}_2(V')$ as 
$$\psi_0(f_1,f_2\ldots,f_n):= (f_1,f_2\ldots,f_{n-1}).$$
Since $\psi_0$ is an odd homeomorphism from $H^{-1}(0)$ to $\psi_0\left(H^{-1}(0)\right)$, we have \[\gamma\left(\psi_{0}(H^{-1}(0))\right)=\gamma(H^{-1}(0))\geq k.\] Given any $f\in H^{-1}(0)$, a  direct computation yields $\mathcal{R}_{2,\infty}^{\sigma}(f)=\overline{\mathcal{R}_{2,\infty}^{\sigma}}\left(\psi_0(f)\right)$. By definition, we have $$L_{k+1}(\Gamma)=\max_{g\in B_{k+1}}\mathcal{R}_{2,\infty}^{\sigma}(g)\geq \max_{g\in H^{-1}(0)}\mathcal{R}_{2,\infty}^{\sigma}(g)=\max_{g\in \psi_0\left(H^{-1}(0)\right)}\overline{\mathcal{R}_{2,\infty}^{\sigma}}(g)\geq L_{k}(\Gamma').$$
This concludes the proof.    
\end{proof}

\section{Lower bound of the $p$-Laplacian eigenvalues}\label{Lower bound}
 In this section, we derive new lower bounds for the $p$-Laplacian variational eigenvalues of signed graphs, with the help of the new invariants $(L_1,\ldots, L_n)$.

 Recall that for any signed graph $\Gamma=(G,\sigma)$, we denote by $A^{\mu}_{-\Gamma}$ the normalized  adjacency matrix (\ref{eq:normAdj}) of the signed graph $-\Gamma=(G,-\sigma)$. We estimate the cut-off adjacency eigenvalues as below.

\begin{theorem}\label{thm:L_n's antibalanced}
     Let $\Gamma=(G,\sigma)$ be a signed graph with $G=(V,E)$. For $1\leq k \leq n$, we have 
     $$L_k(\Gamma)\ge\frac{1}{2}\max_{\Gamma_0\in \mathcal{G}}\lambda_{|V_{\Gamma_0}|-n+k}(A^{\mu}_{-\Gamma_0})=\frac{1}{2}\max_{\substack{\Gamma_0\in \mathcal{G}\\|V_{\Gamma_0}|=n } }\lambda_{k}(A^{\mu}_{-\Gamma_0})$$
 where $\mathcal{G}$ is the set consisting of all subgraphs of $\Gamma$, $|V_{\Gamma_0}|$ is the number of vertices of $\Gamma_0$.
 
When $k=n$, the inequality above is an equality. Moreover, we have 
     $$L_n(\Gamma)
     =\frac{1}{2}\max_{\substack{\Gamma_0\in \mathcal{G},\,|V_{\Gamma_0}|=n \\ \Gamma_0 \,\,\text{is antibalanced} } }\lambda_{n}(A^{\mu}_{-\Gamma_0}).$$
\end{theorem}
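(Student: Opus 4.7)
The plan is to start from the variational characterization of $L_k$ furnished by Theorem \ref{thm:varitional} with $q=2$, namely $L_k(\Gamma)=\min_{B\in\mathcal{F}_k(\mathcal{S}_2)}\max_{g\in B}\mathcal{R}^\sigma_{2,\infty}(g)$, and compare $\mathcal{R}^\sigma_{2,\infty}$ with the Rayleigh quotient $\mathcal{R}_{A^\mu_{-\Gamma_0}}$ of the normalized adjacency matrix of a subgraph $\Gamma_0$. The elementary pointwise inequality $2\max\{x,0\}\ge x$ applied with $x=-g(i)\sigma_{ij}g(j)$ gives, for any spanning subgraph $\Gamma_0=(G_0,\sigma)$ of $\Gamma$ (with edge set $E_0$) and every $g\in C(V)$, the chain
\begin{equation*}
\mathcal{R}^\sigma_{2,\infty}(\Gamma,g)\;\ge\;\frac{\sum_{\{i,j\}\in E_0}w_{ij}\max\{-g(i)\sigma_{ij}g(j),0\}}{\sum_i\mu_i g(i)^2}\;\ge\;\tfrac{1}{2}\mathcal{R}_{A^\mu_{-\Gamma_0}}(g),
\end{equation*}
the first inequality dropping edges outside $E_0$ by nonnegativity of each max. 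Applying $\min_{B\in\mathcal{F}_k(\mathcal{S}_2)}\max_{g\in B}$ on both sides and observing that, because $A^\mu_{-\Gamma_0}$ is self-adjoint on $(C(V),\langle\cdot,\cdot\rangle_\mu)$, Proposition \ref{pro:index}(ii) applied to the span of its first $k-1$ eigenvectors yields $\min_{B\in\mathcal{F}_k(\mathcal{S}_2)}\max_{g\in B}\mathcal{R}_{A^\mu_{-\Gamma_0}}(g)=\lambda_k(A^\mu_{-\Gamma_0})$, one obtains $L_k(\Gamma)\ge\tfrac{1}{2}\lambda_k(A^\mu_{-\Gamma_0})$ for every spanning subgraph.

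For a subgraph $\Gamma_0$ with $|V_{\Gamma_0}|=n-m<n$, I would complete it to a spanning subgraph $\Gamma_0'$ by adjoining the remaining $m$ vertices as isolated points, inheriting their $\mu$-weights from $\Gamma$. Then $A^\mu_{-\Gamma_0'}$ is block diagonal with blocks $A^\mu_{-\Gamma_0}$ and an $m\times m$ zero block, so its spectrum is that of $A^\mu_{-\Gamma_0}$ augmented by $m$ zeros. A direct case analysis on the sign of $\lambda_{k-m}(A^\mu_{-\Gamma_0})$ gives $\lambda_k(A^\mu_{-\Gamma_0'})\ge\lambda_{k-m}(A^\mu_{-\Gamma_0})$ for $m\le k\le n$. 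Together with the spanning case this yields the displayed inequality $L_k(\Gamma)\ge\tfrac{1}{2}\lambda_{|V_{\Gamma_0}|-n+k}(A^\mu_{-\Gamma_0})$ in full generality and also the middle equality of the theorem: its $\ge$ direction is trivial by restricting to $|V_{\Gamma_0}|=n$, and its $\le$ direction is exactly the sorting inequality just quoted.

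The hardest step is the equality at $k=n$. Since any proper closed symmetric subset of the finite-dimensional sphere $\mathcal{S}_2$ has genus at most $n-1$, we have $\mathcal{F}_n(\mathcal{S}_2)=\{\mathcal{S}_2\}$, so $L_n(\Gamma)=\max_{g\in\mathcal{S}_2}\mathcal{R}^\sigma_{2,\infty}(g)$. I would match this against the lower bound by partitioning $\mathcal{S}_2$ into sign orthants indexed by $\tau\colon V\to\{+1,-1\}$. A direct calculation using $g(i)=\tau(i)|g(i)|$ inside the $\tau$-orthant and the identity $\tau(i)\tau(j)=-\sigma_{ij}$ whenever $\sigma^\tau_{ij}=-1$ shows that
\begin{equation*}
\mathcal{R}^\sigma_{2,\infty}(g)=\tfrac{1}{2}\mathcal{R}_{A^\mu_{-\Gamma_0^\tau}}(g)\qquad\text{for every }g\text{ in the }\tau\text{-orthant,}
\end{equation*}
where $\Gamma_0^\tau$ denotes the antibalanced spanning subgraph of $\Gamma$ consisting of exactly those edges with $\sigma^\tau_{ij}=-1$. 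By switching invariance of the adjacency Rayleigh quotient, $A^\mu_{-\Gamma_0^\tau}$ has the same spectrum as the entrywise nonnegative matrix obtained after switching by $\tau$, and Perron--Frobenius ensures that $\lambda_n(A^\mu_{-\Gamma_0^\tau})$ is attained by an eigenvector lying in the $\tau$-orthant. Consequently $\max_{g\text{ in the }\tau\text{-orthant}}\mathcal{R}^\sigma_{2,\infty}(g)=\tfrac{1}{2}\lambda_n(A^\mu_{-\Gamma_0^\tau})$, and maximising over $\tau$ -- while observing that every antibalanced spanning subgraph of $\Gamma$ is contained in some $\Gamma_0^\tau$ and that enlarging an antibalanced spanning subgraph only increases its Perron eigenvalue -- delivers the claimed equality. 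The main remaining subtleties are handling vertices where $g(i)=0$ (any sign $\tau(i)$ may be assigned, since such vertices contribute zero) and checking that $\tau\mapsto\Gamma_0^\tau$ exhausts all maximal antibalanced spanning subgraphs of $\Gamma$.
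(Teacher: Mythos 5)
Your proof is correct. For the inequality and the middle equality it is essentially the paper's argument in a light disguise: the pointwise bound $\max\{x,0\}\ge x$ together with dropping the edges outside $E_0$ gives $\mathcal{R}^{\sigma}_{2,\infty}(g)\ge\tfrac12\mathcal{R}_{A^{\mu}_{-\Gamma_0}}(g)$; Proposition \ref{pro:index}(ii), applied to the $\mu$-orthogonal complement of the first $k-1$ eigenvectors, turns this into $L_k(\Gamma)\ge\tfrac12\lambda_k(A^{\mu}_{-\Gamma_0})$ for spanning $\Gamma_0$ (the paper intersects the minimizing set $B_k$ directly with the span of the top $n-k+1$ eigenvectors, which is the same use of Proposition \ref{pro:index}(ii)); and the reduction of general subgraphs to spanning ones by adjoining isolated vertices is exactly the paper's first step. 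Where you genuinely differ is the upper bound at $k=n$: the paper takes one maximizer $h$ of $\mathcal{R}^{\sigma}_{2,\infty}$, forms the antibalanced spanning subgraph $\Gamma_1$ with edge set $\{\{i,j\}\in E:\, h(i)\sigma_{ij}h(j)<0\}$, and concludes in one line from $\lambda_n(A^{\mu}_{-\Gamma_1})\ge h^TA_{-\Gamma_1}h=2\mathcal{R}^{\sigma}_{2,\infty}(h)$, with no Perron--Frobenius input; you instead decompose $\mathcal{S}_2$ into sign orthants, identify $\mathcal{R}^{\sigma}_{2,\infty}$ on the $\tau$-orthant exactly with $\tfrac12\mathcal{R}_{A^{\mu}_{-\Gamma_0^{\tau}}}$, and use Perron--Frobenius to locate the maximizer of the latter inside the orthant. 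Your route is heavier than necessary --- for the upper bound one only needs $\mathcal{R}^{\sigma}_{2,\infty}\le\tfrac12\mathcal{R}_{A^{\mu}_{-\Gamma_0^{\tau}}}\le\tfrac12\lambda_n(A^{\mu}_{-\Gamma_0^{\tau}})$ on each orthant, so the Perron--Frobenius step and the monotonicity of the top eigenvalue under edge addition can be omitted --- but it buys the sharper orthant-wise identity, which is essentially the structure the paper establishes separately in Corollary \ref{cor:anti-balanced} and exploits in Theorem \ref{thm:anti-eigen}. Two small corrections of bookkeeping rather than substance: the factor $\tfrac12$ arises because $g^TA_{-\Gamma_0}g$ counts each edge twice, not from the inequality $2\max\{x,0\}\ge x$; and you do not need the (true, but unproved in your sketch) claim $\mathcal{F}_n(\mathcal{S}_2)=\{\mathcal{S}_2\}$, since $\gamma(\mathcal{S}_2)=n$ already yields $L_n\le\max_{g\in\mathcal{S}_2}\mathcal{R}^{\sigma}_{2,\infty}(g)$, which is all the upper bound requires.
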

We have the following interesting corollary of Theorem \ref{thm:L_n's antibalanced}.

\begin{cor}\label{cor:anti-balanced}
    Let $\Gamma=(G,\sigma)$ be a signed graph. For $1\leq k\leq n$, we have
    \[L_k(\Gamma)\geq \frac{1}{2}\lambda_k(A^\mu_{-\Gamma}).\]
    In particular, when the signed graph $\Gamma$ is antibalanced, the equality holds for $k=n$, i.e.,   \begin{equation}\label{eq:Ln}
    L_n(\Gamma)=\frac{1}{2}\lambda_n(A^{\mu}_{-\Gamma}).
    \end{equation}
\end{cor}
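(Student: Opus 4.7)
The plan is to deduce both parts of the corollary directly from Theorem~\ref{thm:L_n's antibalanced}. Neither part should require any new variational argument, because the theorem has already packaged all of the hard work; the corollary is essentially a matter of choosing a distinguished subgraph in the $\max$ and, for the equality part, showing that this distinguished choice is optimal.

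For the first inequality $L_k(\Gamma)\ge \tfrac{1}{2}\lambda_k(A^\mu_{-\Gamma})$, I would simply observe that $\Gamma$ is itself a subgraph of $\Gamma$ with $|V_\Gamma|=n$, so $\Gamma$ is admissible in the second $\max$ of Theorem~\ref{thm:L_n's antibalanced}. Since the $\max$ dominates the value at any particular admissible choice, the inequality is immediate. I would make this one sentence long.

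For the equality at $k=n$ when $\Gamma$ is antibalanced, I would use the refined form
\[
L_n(\Gamma)=\frac{1}{2}\max_{\substack{\Gamma_0\in \mathcal{G},\,|V_{\Gamma_0}|=n\\ \Gamma_0 \text{ antibalanced}}}\lambda_n(A^\mu_{-\Gamma_0}).
\]
The ``$\ge$'' direction is again immediate: since $\Gamma$ is antibalanced, $\Gamma$ itself appears as a candidate, giving $L_n(\Gamma)\ge \tfrac12 \lambda_n(A^\mu_{-\Gamma})$. For ``$\le$'', I need that no admissible $\Gamma_0$ can do strictly better than $\Gamma$. Since every $\Gamma_0$ in the max is antibalanced, there is a common switching function $\tau$ that turns the edge signs of $\Gamma$ (and hence of every subgraph $\Gamma_0$) into $-1$; equivalently, after switching $-\Gamma$ and $-\Gamma_0$ by $\tau$, the entries of the adjacency matrices become nonnegative. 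Using that $\lambda_n$ is switching invariant (Proposition~\ref{pro:GLZswitching} at the linear level, or direct similarity via a sign-diagonal matrix) and that $D^{-1}A$ has the same spectrum as the symmetric matrix $D^{-1/2}AD^{-1/2}$, I reduce to the statement that for a nonnegative symmetric matrix, zeroing out off-diagonal entries cannot increase the largest eigenvalue. This follows from Perron--Frobenius, or from the Rayleigh characterization $\lambda_n = \max_{\|x\|=1} x^T M x$ applied to the Perron eigenvector of $A^\mu_{-\Gamma}$ (after the switching), whose entries are all nonnegative.

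The main obstacle, such as it is, will be the monotonicity step for $\lambda_n$ under edge deletion in the antibalanced setting. The cleanest way I can see is: fix $\tau$ so that $\sigma^\tau\equiv +1$ on $-\Gamma$ and therefore also on $-\Gamma_0$; then $A^\mu_{-\Gamma}$ and $A^\mu_{-\Gamma_0}$ (after the corresponding conjugation) are nonnegative and entrywise ordered, with $A^\mu_{-\Gamma_0}\le A^\mu_{-\Gamma}$ entrywise. Perron--Frobenius then yields $\lambda_n(A^\mu_{-\Gamma_0})\le \lambda_n(A^\mu_{-\Gamma})$, closing the inequality. I do not anticipate additional subtleties beyond this, so the whole argument should be short, perhaps half a page once fully written out.
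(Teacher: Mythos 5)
Your argument is correct and follows essentially the same route as the paper: after switching so that $\sigma\equiv-1$, both proofs reduce the equality at $k=n$ to the fact that for the resulting entrywise-nonnegative adjacency matrices, deleting edges cannot increase the top eigenvalue --- exactly the Perron--Frobenius/Rayleigh-quotient step the paper carries out by extending the subgraph's positive top eigenvector by zero. One small correction: in your Rayleigh-quotient justification of that monotonicity you should test with the (nonnegative) top eigenvector of the smaller matrix $A^\mu_{-\Gamma_0}$, not of $A^\mu_{-\Gamma}$; testing with the latter gives the inequality in the unusable direction.
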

As an application of the identity \eqref{eq:Ln}, we analyze the limit of the largest eigenfunction. 
\begin{theorem}\label{thm:anti-eigen}
        Let $\Gamma=(G,\sigma)$ be a connected signed graph with $G=(V,E)$ and $\sigma \equiv -1$. For any $p>1$, let $f^{(p)}\in\mathcal{S}_p(V)$ be an eigenfunction of $\Delta_p^{\sigma}$ corresponding to $\lambda_n^{(p)}$. Then for any $i\in V$, the limit $\lim_{p\to \infty}\left|f^{(p)}(i)\right|^{\frac{p}{2}}$ exists. Moreover, the limit function $f$ defined by
        $$f(i):=\lim_{p\to \infty}\left|f^{(p)}(i)\right|^{\frac{p}{2}},\,\,\text{for all}\,\, i\in V,$$ 
        is an eigenfunction of 
       $A^{\mu}_{-\Gamma}$ corresponding to $\lambda_n(A^{\mu}_{-\Gamma})$. 
\end{theorem}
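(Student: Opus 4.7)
The plan is to transport the eigenfunctions from $\mathcal{S}_p(V)$ to $\mathcal{S}_2(V)$ via the map $\phi_{p,2}$ of Section \ref{sec:pre} and then to exploit the identity $L_n(\Gamma)=\tfrac{1}{2}\lambda_n(A^{\mu}_{-\Gamma})$ provided by Corollary \ref{cor:anti-balanced} together with the uniform-convergence step used inside the proof of Theorem \ref{thm:varitional}. Concretely, I would set $h^{(p)}:=\bigl(|f^{(p)}(1)|^{p/2},\ldots,|f^{(p)}(n)|^{p/2}\bigr)$, so that $h^{(p)}\geq 0$, $h^{(p)}\in \mathcal{S}_2(V)$, and $\phi_{2,p}(h^{(p)})=|f^{(p)}|$. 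Since $\sigma\equiv -1$, the triangle inequality $|f^{(p)}(i)+f^{(p)}(j)|\leq |f^{(p)}(i)|+|f^{(p)}(j)|$ immediately implies $\mathcal{R}_p^{\sigma}(|f^{(p)}|)\geq \mathcal{R}_p^{\sigma}(f^{(p)})=\lambda_n^{(p)}$; but $\lambda_n^{(p)}=\max_{\mathcal{S}_p}\mathcal{R}_p^{\sigma}$ (since $\mathcal{F}_n(\mathcal{S}_p)$ forces the outer set to be all of $\mathcal{S}_p$), so equality holds, i.e.\ $\mathcal{R}_p^{\sigma}(\phi_{2,p}(h^{(p)}))=\lambda_n^{(p)}$.

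Next I would combine two inputs: (i) by Corollary \ref{cor:anti-balanced} applied to the antibalanced graph $\Gamma$, $2^{-p}\lambda_n^{(p)}\to \tfrac{1}{2}\lambda_n(A^{\mu}_{-\Gamma})$; and (ii) in the proof of Theorem \ref{thm:varitional}, Lemma \ref{app:1} combined with Dini's theorem shows that $2^{-p}\mathcal{R}_p^{\sigma}\circ\phi_{2,p}$ converges \emph{uniformly} on the compact set $\mathcal{S}_2(V)$ to the continuous functional $\mathcal{R}_{2,\infty}^{\sigma}$. Evaluated at the moving point $h^{(p)}$, these two facts together give $\mathcal{R}_{2,\infty}^{\sigma}(h^{(p)})\to \tfrac{1}{2}\lambda_n(A^{\mu}_{-\Gamma})$. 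By compactness of $\mathcal{S}_2(V)$, any subsequence of $\{h^{(p)}\}$ admits a further subsequence $h^{(p_k)}\to h^{\ast}$ with $h^{\ast}\geq 0$ and $\|h^{\ast}\|_{\ell^2(\mu)}=1$, and continuity of $\mathcal{R}_{2,\infty}^{\sigma}$ forces $\mathcal{R}_{2,\infty}^{\sigma}(h^{\ast})=\tfrac{1}{2}\lambda_n(A^{\mu}_{-\Gamma})$.

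The identification step uses that for $h^{\ast}\geq 0$ and $\sigma\equiv -1$, each summand $\max\{-h^{\ast}(i)\sigma_{ij}h^{\ast}(j),0\}$ collapses to $h^{\ast}(i)h^{\ast}(j)$, so $\mathcal{R}_{2,\infty}^{\sigma}(h^{\ast})=\tfrac{1}{2}\mathcal{R}_{A^{\mu}_{-\Gamma}}(h^{\ast})$ after comparing ordered versus unordered pairs; hence $\mathcal{R}_{A^{\mu}_{-\Gamma}}(h^{\ast})=\lambda_n(A^{\mu}_{-\Gamma})$ and $h^{\ast}$ is an eigenfunction of $A^{\mu}_{-\Gamma}$ for its largest eigenvalue. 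Finally, connectedness of $G$ together with $-\sigma\equiv +1$ makes the symmetrization $D^{-1/2}A_{-\Gamma}D^{-1/2}$ a non-negative irreducible symmetric matrix, so Perron--Frobenius implies that $\lambda_n(A^{\mu}_{-\Gamma})$ is simple with a unique non-negative unit eigenvector in the $\ell^2(\mu)$ normalization; call it $f$. Since $h^{\ast}=f$ for every subsequential limit, the full sequence $h^{(p)}$ converges to $f$, which gives existence of the pointwise limit $\lim_{p\to\infty}|f^{(p)}(i)|^{p/2}=f(i)$ and identifies $f$ as the claimed eigenfunction.

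The delicate point I anticipate is the passage from $f^{(p)}$, whose signs are a priori unconstrained, to the non-negative object $|f^{(p)}|$: this is where the hypothesis $\sigma\equiv -1$ is essential, since it makes $\mathcal{R}_p^{\sigma}$ monotone under $f\mapsto |f|$ and hence preserves the maximizer property that anchors the whole limit analysis. The other mildly technical input is uniform (rather than pointwise) convergence of $2^{-p}\mathcal{R}_p^{\sigma}\circ\phi_{2,p}$, which is needed to evaluate at the moving sequence $h^{(p)}$ and is supplied by the monotonicity from Lemma \ref{app:1} via Dini's theorem; everything after that is a clean compactness-plus-Perron--Frobenius argument.
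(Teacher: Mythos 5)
Your proposal is correct and follows essentially the same route as the paper: lift $|f^{(p)}|^{p/2}$ into $\mathcal{S}_2(V)$, extract subsequential limits by compactness, identify the limiting Rayleigh quotient with $\tfrac{1}{2}\lambda_n(A^{\mu}_{-\Gamma})$ via Corollary \ref{cor:anti-balanced}, and use Perron--Frobenius simplicity of the top eigenvalue to upgrade subsequential to full convergence. The only local differences are that the paper justifies $\mathcal{R}_p^{\sigma}(|f^{(p)}|)=\lambda_n^{(p)}$ by citing the nonlinear Perron--Frobenius result (Theorem \ref{thm:pr}) rather than your triangle-inequality/global-maximum argument, and it computes the edge-wise limit directly instead of quoting the uniform convergence established in the proof of Theorem \ref{thm:varitional} --- your uniform-convergence route is, if anything, slightly more careful about evaluating the limit along the moving sequence $h^{(p)}$.
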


Combining Proposition \ref{lemma:not depend}, the monotonicity inequality in Theorem \ref{thm:Monotonicity} and the above Theorem \ref{thm:L_n's antibalanced} yields the following new lower bounds of $p$-Laplacian variational eigenvalues. 
\begin{theorem}\label{thm:lower bound}
    Let $\Gamma=(G,\sigma)$ be a signed graph with $G=(V,E)$. For any  $p\geq 1$ and any $1\leq k \leq n$, we have 
     $$\lambda_k^{(p)}\ge2^{p-1}\max_{\Gamma_0\in \mathcal{G}}\lambda_{|V_{\Gamma_0}|-n+k}(A^{\mu}_{-\Gamma_0})-\mathcal{C}=2^{p-1}\max_{\substack{\Gamma_0\in \mathcal{G}\\|V_{\Gamma_0}|=n } }\lambda_{k}(A^{\mu}_{-\Gamma_0})-\mathcal{C}.$$
 where $\mathcal{G}$ is the set consisting of all subgraphs of $\Gamma$, $|V_{\Gamma_0}|$ is the number of vertices of $\Gamma_0$ and $\mathcal{C}:=\max_{1\leq i\leq n}\left|\frac{\kappa_i}{\mu_i}\right|$.
\end{theorem}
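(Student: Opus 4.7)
The plan is to chain together three ingredients that are already in place: the potential-perturbation inequality from Proposition \ref{lemma:not depend}, the monotonicity of $2^{-p}\lambda_k^{(p)}$ from Theorem \ref{thm:Monotonicity}, and the cut-off adjacency lower bound from Theorem \ref{thm:L_n's antibalanced}. Once composed, these yield the claim directly, with no genuinely new computation required.

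First I would reduce to the zero-potential case. Let $\eta_k^{(p)}$ denote the $k$-th variational eigenvalue of the $p$-Laplacian built from the same edge weight $w$ and vertex measure $\mu$ but with the potential replaced by $\kappa'\equiv 0$. Proposition \ref{lemma:not depend} immediately yields
\[
\lambda_k^{(p)}\;\ge\;\eta_k^{(p)}-\mathcal{C},\qquad \mathcal{C}=\max_{1\le i\le n}\left|\frac{\kappa_i}{\mu_i}\right|.
\]
The same proposition (together with Definition \ref{def:cutoffadj}) also shows that $L_k(\Gamma)=\lim_{q\to\infty}2^{-q}\eta_k^{(q)}$, because the cut-off adjacency eigenvalues are independent of the chosen potential.

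Next, since the shifted potential $\kappa'\equiv 0$ satisfies $\kappa'\ge 0$, Theorem \ref{thm:Monotonicity} applies to $\eta_k^{(q)}$, so that $q\mapsto 2^{-q}\eta_k^{(q)}$ is non-increasing on $(1,\infty)$. Letting $q\to\infty$ for any fixed $p>1$ gives $2^{-p}\eta_k^{(p)}\ge L_k(\Gamma)$, and combining with the first step,
\[
\lambda_k^{(p)}\;\ge\;2^{p}L_k(\Gamma)-\mathcal{C}.
\]

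Finally, Theorem \ref{thm:L_n's antibalanced} supplies the bound
\[
L_k(\Gamma)\;\ge\;\frac{1}{2}\max_{\Gamma_0\in\mathcal{G}}\lambda_{|V_{\Gamma_0}|-n+k}(A^{\mu}_{-\Gamma_0})=\frac{1}{2}\max_{\substack{\Gamma_0\in\mathcal{G}\\ |V_{\Gamma_0}|=n}}\lambda_{k}(A^{\mu}_{-\Gamma_0}),
\]
including the equality between the two maxima, which is part of that theorem's statement. Substituting into the previous inequality produces exactly the stated lower bound. The only point requiring attention is that Theorem \ref{thm:Monotonicity} needs the hypothesis $\kappa\ge 0$, which is precisely why the reduction through Proposition \ref{lemma:not depend} must be performed first; aside from this minor order-of-operations issue, the argument is a direct three-step composition with no substantive obstacle.
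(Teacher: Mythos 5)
Your proposal is correct and is exactly the argument the paper intends: the paper itself presents Theorem \ref{thm:lower bound} as a direct combination of Proposition \ref{lemma:not depend}, the monotonicity inequality \eqref{eq:de} of Theorem \ref{thm:Monotonicity} applied with zero potential, and Theorem \ref{thm:L_n's antibalanced}, which is precisely your three-step chain $\lambda_k^{(p)}\ge \eta_k^{(p)}-\mathcal{C}\ge 2^{p}L_k(\Gamma)-\mathcal{C}\ge 2^{p-1}\max_{\Gamma_0}\lambda_{|V_{\Gamma_0}|-n+k}(A^{\mu}_{-\Gamma_0})-\mathcal{C}$. Your remark about first passing to $\kappa'\equiv 0$ so that the hypothesis $\kappa\ge 0$ of Theorem \ref{thm:Monotonicity} is available is the right way to order the steps.
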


Next, we present the proofs for Theorem \ref{thm:L_n's antibalanced}, Corollary \ref{cor:anti-balanced} and Theorem \ref{thm:anti-eigen}, respectively.

\begin{proof}[Proof of Theorem \ref{thm:L_n's antibalanced}]
Since $L_k(\Gamma)$ does not depend on the choice of the potential function $\kappa$, we assume $\kappa\equiv0$ in this proof.
  We first prove that 
  \begin{equation}\label{eq:Gamma_0=n}
  \frac{1}{2}\max_{\Gamma_0\in \mathcal{G}}\lambda_{|V_{\Gamma_0}|-n+k}(A^{\mu}_{-\Gamma_0})=\frac{1}{2}\max_{\substack{\Gamma_0\in \mathcal{G}\\|V_{\Gamma_0}|=n } }\lambda_{k}(A^{\mu}_{-\Gamma_0}).   
  \end{equation} 
The inequality $$\frac{1}{2}\max_{\Gamma_0\in \mathcal{G}}\lambda_{|V_{\Gamma_0}|-n+k}(A^{\mu}_{-\Gamma_0})\geq \frac{1}{2}\max_{\substack{\Gamma_0\in \mathcal{G}\\|V_{\Gamma_0}|=n } }\lambda_{k}(A^{\mu}_{-\Gamma_0})$$ is straightforward.
For the converse, let $\Gamma'\in \mathcal{G}$ be a subgraph satisfying $$\frac{1}{2}\lambda_{|V_{\Gamma'}|-n+k}(A^{\mu}_{-\Gamma'})=\frac{1}{2}\max_{\Gamma_0\in \mathcal{G}}\lambda_{|V_{\Gamma_0}|-n+k}(A^{\mu}_{-\Gamma_0}).$$
Define a new signed graph $\Gamma''\in \mathcal{G}$ to be the union of the graph $\Gamma'$ together with the empty graph $K:=(V\setminus V_{\Gamma'},\emptyset)$. Then we have $$\frac{1}{2}\lambda_{|V_{\Gamma'}|-n+k}(A^{\mu}_{-\Gamma'})\leq \frac{1}{2}\lambda_{k}(A^{\mu}_{-\Gamma''})\leq \frac{1}{2}\max_{\substack{\Gamma_0\in \mathcal{G}\\|V_{\Gamma_0}|=n } }\lambda_{k}(A^{\mu}_{-\Gamma_0}),$$  by the min-max theorem. 
In consequence, we complete the proof of the equality~\eqref{eq:Gamma_0=n}.

    We next prove that for any  subgraph $\Gamma_0\in\mathcal{G}$ with $|V_{\Gamma_0}|=n$, there always holds $L_k(\Gamma)\geq \frac{1}{2}\lambda_{k}(A^{\mu}_{-\Gamma_0})$. Let $\Gamma_0=(G_0,\sigma)$ be a subgraph of $\Gamma$ with $G_0=(V_{\Gamma_0},E_{\Gamma_0})$ and $|V_{\Gamma_0}|=n$. Let $\{f_i\}_{i=1}^{n}$ be the orthonormal eigenfunctions of $A^{\mu}_{-\Gamma_0}$ with respect to the inner product $\langle\,,\,\rangle_{\mu}$, where $f_i\in \mathcal{S}_2(V_{\Gamma_0})$ is corresponding to $\lambda_i(A^{\mu}_{-\Gamma_0})$. 
    Define $W$ to be the linear space spanned by the $n-k+1$ functions $\{f_i\}_{i=k}^{n}$.  By Lemma \ref{app:2}, there exists a minimizing set $B_k\in \mathcal{F}_k(\mathcal{S}_2(V))$ of $L_k(\Gamma)$. By Proposition \ref{pro:index} $(ii)$, we get $W\cap B_k\neq \emptyset$. Picking a function $g\in W\cap B_k$, we  obtain
 \begin{equation*}
 \begin{aligned}
          L_k(\Gamma)&=\max_{f\in B_k}\mathcal{R}_{2,\infty}^{\sigma}(f)\geq \mathcal{R}_{2,\infty}^{\sigma}(g)=\sum_{\{i,j\}\in E}\max\{-w_{ij}g(i)\sigma_{ij}g(j),0\}\\
          &\geq \sum_{\{i,j\}\in E_{\Gamma_0}}\max\{-w_{ij}g(i)\sigma_{ij}g(j),0\}\geq \sum_{\{i,j\}\in E_{\Gamma_0}}-w_{ij}g(i)\sigma_{ij}g(j)\\
          &=\frac{1}{2}g^T\left(A_{-\Gamma_0}\right)g\geq\frac{1}{2}\lambda_{k}(A^{\mu}_{-\Gamma_0}).  
 \end{aligned}
 \end{equation*}
This proves the first inequality. 

For the case of  $k=n$, 
we derive from the first inequality that 
 $$L_n(\Gamma)\geq \frac{1}{2}\max_{\Gamma_0\in \mathcal{G}}\lambda_{|V_{\Gamma_0}|}(A^{\mu}_{-\Gamma_0})=\frac{1}{2}\max_{\substack{\Gamma_0\in \mathcal{G}\\|V_{\Gamma_0}|=n } }\lambda_{n}(A^{\mu}_{-\Gamma_0})\geq \frac{1}{2}\max_{\substack{\Gamma_0\in \mathcal{G},\,|V_{\Gamma_0}|=n \\ \Gamma_0 \text{\,is antibalanced} } }\lambda_{n}(A^{\mu}_{-\Gamma_0}).$$
It remains to prove
  $$L_n(\Gamma)\leq \frac{1}{2}\max_{\substack{\Gamma_0\in \mathcal{G},\,|V_{\Gamma_0}|=n \\ \Gamma_0 \text{\,is antibalanced} } }\lambda_{n}(A^{\mu}_{-\Gamma_0}).$$
Let $h\in \mathcal{S}_2(V)$ be a function such that $L_{n}(\Gamma)=\mathcal{R}^{\sigma}_{2,\infty}(h)$. Define a new antibalanced subgraph $\Gamma_1=(G_1,\sigma)$ with $G_1=(V,E_1)$ where $$E_1=\{\{i,j\}\in E:\, h(i)\sigma_{ij}h(j)<0\}.$$
Then, we have $$\lambda_{n}(A^{\mu}_{-\Gamma_1})\geq h^TA_{-\Gamma_1}~h\geq \sum_{\{i,j\}\in E_1}-2w_{ij}h(i)\sigma_{ij}h(j)=2\mathcal{R}_{\infty}^{\sigma}(h)=2L_n(\Gamma).$$
This concludes the proof.
\end{proof}

Let $\Gamma=(G,\sigma)$ be a balanced graph. For every subgraph $\Gamma'$ of $\Gamma$, $\Gamma'$ is antibalanced if and only if $\Gamma'$ is bipartite. This implies the following corollary.
\begin{cor}\label{cor:balanced}
    Let $\Gamma=(G,\sigma)$ be a balanced graph, then $$L_n(\Gamma)=\frac{1}{2}\max_{\substack{\Gamma'\in \mathcal{G}_0\\|V_{\Gamma'}|=n } }\lambda_{n}(A^{\mu}_{-\Gamma'}),$$
where $\mathcal{G}_0$ is the set consists of all bipartite subgraphs of $\Gamma$.
\end{cor}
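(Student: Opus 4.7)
The plan is to invoke Theorem \ref{thm:L_n's antibalanced} directly and then verify that, under the balanced hypothesis on $\Gamma$, the family of antibalanced subgraphs appearing in that theorem coincides exactly with the family of (unsigned-) bipartite subgraphs $\mathcal{G}_0$. Since Theorem \ref{thm:L_n's antibalanced} already delivers the identity
$$L_n(\Gamma)=\frac{1}{2}\max_{\substack{\Gamma_0\in \mathcal{G},\,|V_{\Gamma_0}|=n \\ \Gamma_0 \text{ is antibalanced} } }\lambda_{n}(A^{\mu}_{-\Gamma_0}),$$
the corollary will follow the moment this set-theoretic identification is in place.

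To establish the identification, I would first use the switching invariance of the cut-off adjacency eigenvalues (Proposition \ref{pro:switching-spectra}) together with the balance hypothesis to reduce, without loss of generality, to the case $\sigma\equiv+1$ on $\Gamma$. Under this reduction, every subgraph $\Gamma'=(G',\sigma)$ inherits the all-positive signature. For such an all-positive signed graph on the full vertex set $V$, being antibalanced amounts to the existence of a switching function $\tau:V\to\{+1,-1\}$ with $\sigma_{ij}^{\tau}=\tau(i)\tau(j)=-1$ for every $\{i,j\}\in E_{\Gamma'}$. This is precisely the condition that $\tau^{-1}(+1)$ and $\tau^{-1}(-1)$ form a bipartition of the unsigned graph underlying $\Gamma'$; conversely, any bipartition of $\Gamma'$ produces such a $\tau$. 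Hence, within the collection of spanning subgraphs of the balanced graph $\Gamma$, antibalance is equivalent to bipartiteness.

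Substituting this equivalence into the formula from Theorem \ref{thm:L_n's antibalanced} yields
$$L_n(\Gamma)=\frac{1}{2}\max_{\substack{\Gamma'\in \mathcal{G}_0\\|V_{\Gamma'}|=n } }\lambda_{n}(A^{\mu}_{-\Gamma'}),$$
which is the assertion. There is no genuine obstacle here; the content of the corollary is already fully encoded in Theorem \ref{thm:L_n's antibalanced}, and the only work is the standard signed-graph observation that for a balanced host graph the antibalanced spanning subgraphs are exactly the bipartite ones.
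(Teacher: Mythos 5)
Your proposal is correct and follows essentially the same route as the paper, which derives the corollary from Theorem \ref{thm:L_n's antibalanced} via the single observation that for a balanced host graph a subgraph is antibalanced if and only if it is bipartite. Your switching argument just fills in the standard justification of that observation, which the paper leaves implicit.
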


\begin{proof}[Proof of Corollary \ref{cor:anti-balanced}]
Similarly as the proof of Theorem \ref{thm:L_n's antibalanced}, we assume $\kappa\equiv0$. 

The estimate $L_k(\Gamma)\geq \frac{1}{2}\lambda_k(A^\mu_{-\Gamma})$ follows directly from Theorem \ref{thm:L_n's antibalanced}. To obtain (\ref{eq:Ln}), it remains to prove $L_n(\Gamma)\leq \frac{1}{2}\lambda_n(A^{\mu}_{-\Gamma}).$

By Proposition \ref{pro:switching-spectra}, without loss of generality, we can assume $\sigma\equiv -1$.
Let $\Gamma_0=(G_0,\sigma)$ with $G_0=(V_0,E_0)$ be any connected subgraph of $\Gamma$. Let $f_0\in \mathcal{S}_2(V_0)$ be an eigenfunction corresponding to the eigenvalue $\lambda_n(A^{\mu}_{-\Gamma_{0}})$. 
Since all the entries of $A^{\mu}_{-\Gamma_{0}}$ are non-negative, by Perron-Frobenius theorem, we can assume $f_0(i)>0$ for any $i\in V_0$. Define a new function $f\in \mathcal{S}_2(V) $ as follows:
\begin{equation*}
   f(i)=\left\{ \begin{aligned}
        &f_0(i) & \qquad i\in V_0,\\
        &0  &  \qquad i\in V\setminus V_0.
    \end{aligned}
    \right.
\end{equation*}
Then we have by direct computations 
\begin{equation*}
	\begin{aligned}
	 \frac{1}{2}\lambda_n(A^{\mu}_{-\Gamma_0})=\frac{1}{2}f_0^T\left(A_{-\Gamma_0}\right)f_0=&\sum_{\{i,j\}\in E_0}w_{ij}f_0(i)f_0(j) \\\leq & \sum_{\{i,j\}\in E}w_{ij}f(i)f(j)= \frac{1}{2}f^T\left(A_{-\Gamma}\right)f\leq \frac{1}{2}\lambda_n(A^{\mu}_{-\Gamma}).
	\end{aligned}
\end{equation*} 
By arbitrariness of $\Gamma_0$, we have $$L_n(\Gamma)=\frac{1}{2}\max_{\substack{\Gamma_0\in \mathcal{G},\,|V_{\Gamma_0}|=n \\ \Gamma_0 \text{\,is antibalanced} } }\lambda_{n}(A^{\mu}_{-\Gamma_0})\leq\frac{1}{2}\lambda_n(A^{\mu}_{-\Gamma}).$$ This concludes the proof.
\end{proof}

Next, we analyze the limit of the largest eigenfunction of 
$\Delta_p^{\sigma}$ to prove Theorem \ref{thm:anti-eigen}. We first recall the following Perron-Frobenius type result from  \cite{GLZ22+}. 

\begin{theorem}[{\cite[Theorem 9]{GLZ22+}}]\label{thm:pr}
 Let $\Gamma=(G,\sigma)$  be a connected signed graph with $G=(V,E)$ and $\sigma\equiv -1$. Assume that $f$ is an eigenfunction of  $\Delta_p^\sigma$ corresponding to  $\lambda_n$. Then, we have:
	\begin{itemize}
          \item [(i)] $f$ is either positive on all vertices or negative on all vertices;
          \item [(ii)] 
          Let $g$ be another eigenfunction of  $\Delta_p^\sigma$ corresponding to  $\lambda_n$. Then there must exist $c\in \mathbb{R}\setminus \{0\}$ such that $g=cf$.   
		\item [(iii)]
  Given any other eigenfunction $g$ corresponding to an eigenvalue $\lambda$, if $g(i)>0$ for any $i\in V$ or $g(i)<0$ for any $i\in V$, we have $\lambda=\lambda_n$.
	\end{itemize}
\end{theorem}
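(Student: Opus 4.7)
The plan is to establish the three parts in order, adapting the classical Perron--Frobenius strategy to the $p$-homogeneous antibalanced ($\sigma \equiv -1$) setting, with connectedness of $G$ used repeatedly.

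For part (i), the first observation is that $\mathcal{R}_p^\sigma(|f|) \geq \mathcal{R}_p^\sigma(f)$: on each edge, $|f(i)+f(j)|^p \leq (|f(i)|+|f(j)|)^p$, while the potential and normalization terms are sign-blind. Since $\lambda_n = \max_{\mathcal{S}_p} \mathcal{R}_p^\sigma$ and $f$ already realizes this maximum, $|f|$ is also a maximizer, hence an eigenfunction for $\lambda_n$. If $|f|(i_0) = 0$ at some vertex, evaluating the eigenvalue equation at $i_0$ forces $\sum_{j\sim i_0} w_{i_0 j}|f|(j)^{p-1} = 0$, so $|f|$ vanishes on all neighbors of $i_0$, and connectedness yields the contradiction $|f|\equiv 0$. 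Finally, equality in $|f(i)+f(j)| \leq |f(i)|+|f(j)|$ must hold on every edge (else $\mathcal{R}_p^\sigma(|f|) > \mathcal{R}_p^\sigma(f)$, contradicting the maximality of $f$), which forces $f(i)f(j) > 0$ across each edge; by connectedness $f$ has constant sign.

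For part (ii), take $f$ and $g$ positive (using (i) and possibly replacing one by its negative, which is also an eigenfunction by the oddness of $\Delta_p^\sigma$), set $t^* := \max_i f(i)/g(i)$, and let $i_0$ be an attaining vertex. The bound $f(i_0)+f(j) \leq t^*(g(i_0)+g(j))$ combined with the eigenvalue equation for $g$ yields
$$\lambda_n \mu_{i_0} f(i_0)^{p-1} = \sum_{j\sim i_0}w_{i_0 j}(f(i_0)+f(j))^{p-1} + \kappa_{i_0}f(i_0)^{p-1} \leq (t^*)^{p-1}\lambda_n \mu_{i_0} g(i_0)^{p-1} = \lambda_n \mu_{i_0} f(i_0)^{p-1},$$
so equality must hold edge-by-edge and $f(j) = t^* g(j)$ for every $j \sim i_0$. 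Iterating at each such neighbor (each of which again attains $t^*$) and invoking connectedness gives $f = t^* g$ on $V$, which translates to $g = cf$ for some $c \neq 0$ in the original signs. Part (iii) follows from the identical comparison between a positive eigenfunction $f$ for $\lambda_n$ furnished by (i) and the hypothesized positive $g$ for $\lambda$: the same chain at $i_0$ gives $\lambda_n \leq \lambda$, and combined with the trivial $\lambda \leq \lambda_n$ one concludes $\lambda = \lambda_n$.

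The main obstacle is to carry out the Collatz--Wielandt argument of (ii)--(iii) in the nonlinear, non-matrix-based setting. The two enabling features are that $\sigma \equiv -1$ turns every edge contribution $f(i)-\sigma_{ij}f(j) = f(i)+f(j)$ into a positive quantity on positive inputs (so $\Psi_p$ acts as the strictly monotone power $x\mapsto x^{p-1}$), and that $\Psi_p$ is positively $(p-1)$-homogeneous; these are precisely what let the edge-inequality $f(i_0)+f(j) \leq t^*(g(i_0)+g(j))$ tighten to equality once the global eigenvalue identity is imposed. Mixed signatures would destroy this monotonicity, which explains why the statement is confined to the antibalanced regime.
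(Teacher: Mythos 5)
The paper does not actually prove this statement: Theorem \ref{thm:pr} is imported verbatim from \cite[Theorem 9]{GLZ22+}, so there is no in-paper proof to compare against. Your argument is correct and is the expected one — symmetrization $f\mapsto|f|$ plus connectedness for the sign pattern, and a Collatz--Wielandt comparison at a vertex maximizing $f/g$ for uniqueness and for part (iii); this is the standard nonlinear Perron--Frobenius route that the cited source follows. Two steps you use implicitly deserve a sentence each if this were written out in full: (a) that $\lambda_n^{(p)}=\max_{\mathcal{S}_p}\mathcal{R}_p^\sigma$, which needs the observation that a closed symmetric subset of $\mathcal{S}_p\cong S^{n-1}$ of index $n$ must be all of $\mathcal{S}_p$ (the paper itself uses this identity without comment in Proposition \ref{pro:union L_n}); and (b) that an eigenfunction for $\lambda_n$ is automatically a global maximizer of $\mathcal{R}_p^\sigma$, which follows from the summation-by-parts identity $\mathcal{R}_p^\sigma(f)=\lambda$ for any eigenpair, and conversely that a constrained maximizer is an eigenfunction via Lagrange multipliers. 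With those noted, the equality analysis on each edge and the propagation of the ratio $t^*$ along the connected graph are carried out correctly, and the $\kappa$-term causes no trouble since it enters both sides of the comparison with exact $(p-1)$-homogeneous equality.
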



\begin{proof}[Proof of Theorem \ref{thm:anti-eigen}]
   
For any $p$, define $f_2^{(p)}:V\to \mathbb{R}$ as follows $$f_2^{(p)}(j):=\left| f^{(p)}(j) \right|^{\frac{p}{2}},\,\,\text{for all} \,\, j\in V.$$ We have $f_2^{(p)}\in \mathcal{S}_2(V)$ by definition. By compactness of $\mathcal{S}_2(V)$, we can take a function $g\in \mathcal{S}_2(V)$ and a subsequence $\{f_2^{(p_k)}\}_{k=1}^{\infty}$ such that for any $j\in V$,  $\lim_{k\to \infty}f_2^{(p_k)}(j)=g(j)$. Next, we get from Theorem \ref{thm:pr} that $\left|f^{(p)}\right|$ is also an eigenfunction of $\Delta_p^{\sigma} $ corresponding to $\lambda_n^{(p)}$.
By Corollary \ref{cor:anti-balanced}, we have 
\begin{equation*}
	\begin{aligned}
	& \frac{1}{2}\lambda_n(A^{\mu}_{-\Gamma})=L_n(\Gamma)=\lim_{p\to \infty}2^{-p}\lambda_n^{(p)}= \lim_{k\to \infty}2^{-p_k}\lambda_n^{(p_k)}= \lim_{k\to \infty}2^{-p_k}\mathcal{R}_{p_k}^{\sigma}\left(|f^{(p_k)}|\right)\\	
        & =\lim_{k\to \infty}\sum_{\{i,j\}\in E}2^{-p_k}w_{ij}\left||f^{(p_k)}(i)|+|f^{(p_k)}(j)| \right|^{p_k} \\
        & =\lim_{k\to \infty}\sum_{\{i,j\}\in E}2^{-p_k}w_{ij}\left|\left|f_2^{(p_k)}(i)\right|^{\frac{2}{p_k}}+\left|f_2^{(p_k)}(j)\right|^{\frac{2}{p_k}} \right|^{p_k}  \\
        &=\sum_{\{i,j\}\in E}\lim_{k\to \infty}2^{-p_k}w_{ij}\left|\left|f_2^{(p_k)}(i)\right|^{\frac{2}{p_k}}+\left|f_2^{(p_k)}(j)\right|^{\frac{2}{p_k}} \right|^{p_k}   \\
        &=\sum_{\{i,j\}\in E}w_{ij}\left|g(i)g(j) \right|=\frac{1}{2}g^T\left(A_{-\Gamma}\right)g\leq \frac{1}{2}\lambda_n(A^{\mu}_{-\Gamma}).	\end{aligned}
\end{equation*} 
Then the above inequality is an equality.  By Perron-Frobenius theorem, we  obtain that $g$ is an eigenfunction of $A^{\mu}_{-\Gamma}$.

For any subsequence $\{f^{(p_l)}\}_{l=1}^{\infty}$ of $\{f^{(p)}\}_{p\geq 1}$, we can do as above to take a subsequence $\{f^{(p_{l_m})}\}_{m=1}^{\infty}$ which satisfies $\lim_{m\to \infty}\left|f^{(p_{l_m})}\right|^{\frac{p_{l_m}}{2}}=g$, where the
function $g$ is defined as above. This proves that for any $i\in V$, $\lim_{p\to \infty}\left|f^{(p)}(i)\right|^{\frac{p}{2}}$ exists and equals $g(i)$. Theorem \ref{thm:anti-eigen} follows from the fact that $g$ is an eigenfunction of $A_{-\Gamma}^{\mu}$.
\end{proof}

Since the all-negative signature on a bipartite graph is switching equivalent to the all-positive signature, we derive the following corollary. 

\begin{cor}  
    Let $G=(V_1\cup V_2,E)$ be a bipartite graph, where $(V_1,V_2)$ is  its bipartition. Let $f^{(p)}$ be an eigenfunction of $\Delta_p^{\sigma}$  corresponding to $\lambda_n^{(p)}$. Then for any $i\in V$, the limit  $\lim_{p\to \infty}\left|f^{(p)}(i)\right|^{\frac{p}{2}}$ exists and the function defined as 
    \begin{equation*}
   f(i):=\left\{ \begin{aligned}
        & \lim_{p\to \infty}\left|f^{(p)}(i)\right|^{\frac{p}{2}} & \qquad i\in V_1,\\
        & -\lim_{p\to \infty}\left|f^{(p)}(i)\right|^{\frac{p}{2}}  &  \qquad i\in V_2,
    \end{aligned}
    \right.
\end{equation*}
   is an eigenfunction of $A_{\Gamma}^{\mu}$ corresponding to $\lambda_n(A_{\Gamma}^{\mu})$.
\end{cor}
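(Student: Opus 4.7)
The plan is to reduce this corollary to Theorem \ref{thm:anti-eigen} via the Zaslavsky switching lemma (Proposition \ref{pro:GLZswitching}), exploiting the fact that on a bipartite graph the all-positive and all-negative signatures are switching equivalent.

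First I would introduce the switching function $\tau:V\to\{+1,-1\}$ defined by $\tau(i)=+1$ for $i\in V_1$ and $\tau(i)=-1$ for $i\in V_2$. Because $G$ is bipartite, every edge $\{i,j\}\in E$ joins $V_1$ to $V_2$, so $\tau(i)\tau(j)=-1$ on each edge. Consequently, the signature $\sigma$ and its switched version $\sigma^\tau$ (where $\sigma^\tau_{ij}=\tau(i)\sigma_{ij}\tau(j)$) are related by a global sign flip on all edges. In particular, one of $\sigma$, $\sigma^\tau$ is the all-negative signature, the hypothesis of Theorem \ref{thm:anti-eigen}; denote the corresponding all-negative signed graph by $\tilde{\Gamma}=(G,-1)$.

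Next, by Proposition \ref{pro:GLZswitching}, $(\lambda^{(p)}_n,f^{(p)})$ being an eigenpair of $\Delta_p^\sigma$ is equivalent to $(\lambda^{(p)}_n,\tau f^{(p)})$ being an eigenpair of $\Delta_p^{-1}$ on $\tilde{\Gamma}$, since variational eigenvalues are switching invariant. Apply Theorem \ref{thm:anti-eigen} to $\tilde{\Gamma}$ with the eigenfunction $g^{(p)}:=\tau f^{(p)}$: one concludes that for every $i\in V$ the limit $h(i):=\lim_{p\to\infty}|g^{(p)}(i)|^{p/2}$ exists, and that $h$ is an eigenfunction of $A^\mu_{-\tilde{\Gamma}}=A^\mu_{(G,+1)}$ corresponding to its largest eigenvalue. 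Since $|g^{(p)}(i)|=|\tau(i)f^{(p)}(i)|=|f^{(p)}(i)|$, the limit is exactly $h(i)=\lim_{p\to\infty}|f^{(p)}(i)|^{p/2}$, establishing existence of the limit claimed in the corollary.

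Finally I would translate $h$ back to an eigenfunction of $A^\mu_\Gamma$ itself via the switching relation at the level of adjacency matrices. A direct computation shows that for switching-equivalent signatures one has $A^\mu_\Gamma=D_\tau A^\mu_{\tilde{\Gamma}\text{ or }-\tilde{\Gamma}}D_\tau$ (depending on which sign convention is in force), where $D_\tau=\mathrm{diag}(\tau(1),\ldots,\tau(n))$. Therefore, $D_\tau h$ is an eigenfunction of $A^\mu_\Gamma$ for the eigenvalue $\lambda_n(A^\mu_\Gamma)$, which by the bipartite spectral symmetry coincides with $\lambda_n(A^\mu_{(G,+1)})$. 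Since $(D_\tau h)(i)=\tau(i)h(i)$ equals $h(i)$ on $V_1$ and $-h(i)$ on $V_2$, this is precisely the function $f$ written in the corollary. The main conceptual obstacle is the careful bookkeeping of signs across the two switchings — once at the $p$-Laplacian eigenfunction level to invoke Theorem \ref{thm:anti-eigen}, and again at the linear adjacency level to land on the correct eigenvalue $\lambda_n(A^\mu_\Gamma)$; everything else is routine verification.
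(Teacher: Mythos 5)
Your proposal follows essentially the same route as the paper's proof: introduce the switching function $\tau$ that is $+1$ on $V_1$ and $-1$ on $V_2$, use switching invariance of eigenpairs (Proposition \ref{pro:GLZswitching}) to move to the all-negative signed graph $\tilde{\Gamma}=(G,-1)$, apply Theorem \ref{thm:anti-eigen} there, and note that $|\tau f^{(p)}|=|f^{(p)}|$ so the limit exists. The one place your bookkeeping slips is the final conjugation: since $-\tilde{\Gamma}=(G,+1)=\Gamma$, the limit $h$ is \emph{already} an eigenfunction of $A^\mu_{-\tilde{\Gamma}}=A^\mu_\Gamma$ for $\lambda_n(A^\mu_\Gamma)$, and conjugating by $D_\tau$ (using $D_\tau A^\mu_\Gamma D_\tau=-A^\mu_\Gamma$ on a bipartite graph) sends $h$ to an eigenfunction for $-\lambda_n(A^\mu_\Gamma)=\lambda_1(A^\mu_\Gamma)$, not back to $\lambda_n$; ``bipartite spectral symmetry'' puts $-\lambda_n$ in the spectrum but does not return you to $\lambda_n$. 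This is precisely the sign subtlety that the paper's own terse closing sentence also elides, so your argument is no less complete than the paper's --- just be aware that the nonnegative function $h$ is the Perron eigenfunction for $\lambda_n(A^\mu_\Gamma)$, while the alternating-sign function $f=\tau h$ written in the statement is, strictly speaking, the eigenfunction for the smallest eigenvalue of the nonnegative matrix $A^\mu_\Gamma$.
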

\begin{proof}
    Define a switching function $\tau:V\to \{+1,-1\}$  by
        \begin{equation*}
   \tau(i)=\left\{ \begin{aligned}
        &+1 & \qquad i\in V_1,\\
        &-1  &  \qquad i\in V_2.
    \end{aligned}
    \right.
\end{equation*}
Regard the graph $G=(V_1\cup V_2,E)$ as a signed graph $\Gamma=(G,\sigma)$ with $\sigma\equiv+1$. Then we use $\tau $ to switch $\Gamma$ to an anti-balanced graph $\Gamma^{\tau}$. By Proposition \ref{pro:switching-spectra}, we obtain that $(\lambda_n^{(p)},\tau f_n^{(p)})$ is an eigenpair of $\Delta_p^{\sigma^{\tau}}$. By Theorem \ref{thm:anti-eigen}, we have $\lim_{p\to \infty}\left|\tau f^{(p)}\right|^{\frac{p}{2}}=g$, where $g$ is an eigenfunction of $A_{\Gamma}^{\mu}$  corresponding to $\lambda_n(A_{\Gamma}^{\mu})$. It is equivalent to the conclusion of this corollary.
\end{proof}

Together with \cite[Theorem 8]{GLZ22+}  and Theorem \ref{thm:lower bound}, we get a relation among the signed  Cheeger constants (see \cite[Definition 4.1]{GLZ22+}),  our invariants, and certain  quantities corresponding to the adjacency spectrum:
\begin{cor} Let $\Gamma=(G,\sigma)$ be a signed graph with $G=(V,E)$ and $\kappa\equiv0$. Then, 
\[h_k^\sigma\ge 2L_k\ge \max_{\substack{\Gamma_0\in \mathcal{G}\\|V_{\Gamma_0}|=n } }\lambda_{k}(A^{\mu}_{-\Gamma_0}),\;\;k=1,\cdots,n.\]
\end{cor}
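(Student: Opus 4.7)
The plan is to split the chain into its two inequalities and observe that each is essentially a direct consequence of results already developed. With $\kappa\equiv 0$, the right-hand inequality
\[
2L_k\ge \max_{\substack{\Gamma_0\in \mathcal{G}\\|V_{\Gamma_0}|=n}}\lambda_{k}(A^{\mu}_{-\Gamma_0})
\]
is nothing but the first estimate in Theorem \ref{thm:L_n's antibalanced} after multiplying both sides by $2$. So for this half nothing new is needed; I would simply quote Theorem \ref{thm:L_n's antibalanced} and state the equivalent form.

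For the left-hand inequality $h_k^\sigma\ge 2L_k$, I would invoke the multi-way Cheeger-type inequality for signed $p$-Laplacians recorded as \cite[Theorem 8]{GLZ22+}. In its classical $k=2$, $\sigma\equiv 1$ avatar it reads $\lambda_2^{(p)}\le 2^{p-1}h$, and the multi-way signed extension provides the analog
\[
\lambda_k^{(p)}(\Gamma)\le 2^{p-1}\, h_k^\sigma,\qquad k=1,\dots,n,
\]
valid for every $p>1$. Dividing by $2^p$ and letting $p\to\infty$, Definition \ref{def:cutoffadj} yields
\[
L_k(\Gamma)=\lim_{p\to\infty}2^{-p}\lambda_k^{(p)}(\Gamma)\le \frac{1}{2}h_k^\sigma,
\]
which is exactly the desired inequality. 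The monotonicity in Theorem \ref{thm:Monotonicity} guarantees the limit is well-defined (hence taking $\limsup$ and $\liminf$ gives the same value), and Proposition \ref{lemma:not depend} ensures no loss of generality in putting $\kappa\equiv 0$ before quoting the Cheeger bound.

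Putting the two halves together delivers the full chain $h_k^\sigma\ge 2L_k\ge \max_{|V_{\Gamma_0}|=n}\lambda_k(A^{\mu}_{-\Gamma_0})$. The only delicate point is making sure the statement of \cite[Theorem 8]{GLZ22+} is cited in the precise normalization used here (the factor $2^{p-1}$ rather than, say, $p^p/2$), since the subsequent asymptotic cancellation against $2^p$ is exactly what forces the constant $\tfrac{1}{2}$ on the right of the Cheeger bound. Once that normalization is checked, there is no additional obstacle and the corollary follows in a few lines.
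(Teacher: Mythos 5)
Your proposal is correct and follows exactly the route the paper intends: the paper states this corollary without a written proof, remarking only that it follows by combining the multi-way signed Cheeger upper bound $\lambda_k^{(p)}\le 2^{p-1}h_k^\sigma$ from \cite[Theorem 8]{GLZ22+} with the lower bound of Theorem \ref{thm:lower bound} (equivalently, Theorem \ref{thm:L_n's antibalanced} for the $L_k$ formulation), which is precisely your two-step decomposition. Your added caution about checking the normalization of the cited Cheeger inequality is sensible but not a gap.
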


\section{Inertia bounds}\label{sec:inertia}

Recall that in a graph $G=(V,E)$, an independent set is a subset of $V$ such that any two vertices in this set are not adjacent. An independent set is called maximum if its cardinality is maximum among all independent sets of the graph $G$. An edge cover of a graph $G$ is a subset of $E$ such that every vertex of $G$ is incident to at least one edge in this set. If the cardinality of an edge cover is minimum  in all edge covers of the graph $G$, we say this edge cover is minimum. For more basics about independent sets and edge covers, we refer to the book \cite{West}.

It is an important problem to estimate the cardinality of a maximum
independent set and  a minimum  edge cover of a given graph. In 1971, D. M. Cvetkovi$\acute{c}$ used the number of non-positive eigenvalues and non-negative eigenvalues of its signed weight  matrices to derive an upper bound of the  cardinality of a maximum independent set \cite{Cvetkovic},  which is well-known as the inertia bound or Cvetkovi$\acute{c}$ bound. In this section, we use our new invariants to estimate this two graph invariants. We will prove that our estimate on the cardinality of a maximum independent set implies the Cvetkovi$\acute{c}$ bound, and our estimate on the cardinality of a minimum edge cover leads to an interesting asymptotic analysis of the graph $p$-Laplacian eigenvalues as $p$ tends to infinity. 

Recall that $n^+(M)\left(\text{resp., } n^-(M) \right)$ denotes the number of positive (resp., negative) eigenvalues of the matrix $M$. We first state  the Cvetkovi$\acute{c}$ bound. 
\begin{theorem}[Cvetkovi\'{c} bound]\label{thm:Inertia bound}
    Let $G=(V,E)$ be a graph with $n=|V|$. Define $$\mathcal{M}:=\{M: M=(m_{ij})_{1\leq i,j\leq n} \text{ is a symmetric matrix such that $m_{ij}=0$ if $i\nsim j$}\}.$$ 
    Then the cardinality $\alpha$ of a maximum independent set of $G$ satisfies
    $$\alpha\leq \min_{M\in \mathcal{M}}\min\{n-n^+(M),n-n^-(M)\}.$$
\end{theorem}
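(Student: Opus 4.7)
The plan is to adapt the classical Cauchy interlacing argument, which is the standard route to Cvetkovi\'c--type inertia bounds. The key observation is that the hypothesis on $\mathcal{M}$ together with the convention that $G$ has no self-loops (so that $i\not\sim i$ and hence $m_{ii}=0$ for every $M\in\mathcal{M}$) forces the principal submatrix $M[S]$ to be \emph{identically zero} whenever $S$ is an independent set. Indeed, for distinct $i,j\in S$, independence gives $\{i,j\}\notin E$, so $m_{ij}=0$; and on the diagonal $m_{ii}=0$ by the no-loop convention.

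Next, I would invoke Cauchy interlacing applied to the $n\times n$ symmetric matrix $M$ and its $s\times s$ principal submatrix $M[S]$, where $s=|S|$. Interlacing yields
\begin{equation*}
    \lambda_i(M)\;\le\;\lambda_i(M[S])\;\le\;\lambda_{i+n-s}(M),\qquad i=1,\ldots,s.
\end{equation*}
Since $M[S]=0$, every $\lambda_i(M[S])$ vanishes. The left half of the inequality therefore forces $\lambda_1(M),\ldots,\lambda_s(M)\le 0$, so $M$ has at least $s$ non-positive eigenvalues, giving $n^+(M)\le n-s$. The right half similarly forces $\lambda_{n-s+1}(M),\ldots,\lambda_n(M)\ge 0$, so $M$ has at least $s$ non-negative eigenvalues, giving $n^-(M)\le n-s$. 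Combined, $s\le\min\{n-n^+(M),n-n^-(M)\}$.

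Taking $S$ to be a maximum independent set so that $s=\alpha$, and then minimising over $M\in\mathcal{M}$, yields the claimed bound. I do not anticipate any serious obstacle: the proof rests on two elementary ingredients, namely the zero-diagonal feature inherited from the no-self-loop convention and the Cauchy interlacing theorem. The only point deserving attention is verifying that the definition of $\mathcal{M}$ in the paper does indeed include the diagonal constraint via $i\not\sim i$; once this is noted, the rest of the argument is a routine application of interlacing, and no spectral theory of $p$-Laplacians or of the cut-off eigenvalues $L_k$ is needed to prove this particular statement.
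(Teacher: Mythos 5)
Your interlacing argument is correct and complete: the no-self-loop convention (stated in Section \ref{sec:pre} and reiterated by the authors right after the theorem, where they note $m_{ii}=0$ for all $M\in\mathcal{M}$) does force $M[S]$ to vanish on any independent set $S$, and the Cauchy interlacing inequalities $\lambda_i(M)\le\lambda_i(M[S])\le\lambda_{i+n-s}(M)$ then give $n^+(M)\le n-s$ and $n^-(M)\le n-s$ exactly as you say. The one thing to note is that the paper does not actually prove this theorem at all --- it is stated as the classical Cvetkovi\'c bound with a citation to \cite{Cvetkovic}, so there is no in-paper proof to match yours against. What the paper does instead is reprove (and strengthen) the bound indirectly through its new invariants: Theorem \ref{thm:Inertia vertices} shows $\alpha\le\min_\sigma\#\{k:L_k(\Gamma^\sigma)=0\}$, and Proposition \ref{pro:cover} together with Lemma \ref{lemma:M_0} shows this quantity is at most $\min_{M\in\mathcal{M}}\min\{n-n^+(M),n-n^-(M)\}$; chaining these recovers the Cvetkovi\'c bound from the nonlinear spectral framework. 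Your proof is the standard, self-contained linear-algebra route and, as you observe, needs none of the $p$-Laplacian or cut-off eigenvalue machinery; the paper's route is longer but buys the sharper intermediate bound $\min_\sigma\#\{k:L_k=0\}$, which is the actual point of that section.
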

Note that in the above theorem, we have for any $M=(m_{ij})_{1\leq i,j\leq n}\in \mathcal{M}$, the diagonal entries $m_{ii}=0$ for any $i\in V$, and the off-diagonal entries $m_{ij}$ can be either positive, zero or negative. 
This is different from the adjacency matrix. We can reformulate the Cvetkovi\'{c} bound as
\begin{align*}  \alpha\leq &\min_{M\in \mathcal{M}}\min\{\# \{k: \lambda_k(M)\leq 0\},\# \{k: \lambda_k(M)\geq 0\}\}\\
=& \min_{M\in \mathcal{M}}\min\{\# \{k: \lambda_k(M)\leq 0\},\# \{k: \lambda_k(-M)\leq 0\}\}.
\end{align*}

 The Cvetkovi\'{c} bound is tight for all graphs with no more than $10$ vertices, vertex-transitive graphs with no more than $12$ vertices, perfect graphs and some families of Cayley and strongly regular graphs \cite{Elzinga07,Elzinga10,Rooney}. But it is not tight for the Paley graph on 17 vertices \cite{Sinkovic}. Since our bound in Theorem \ref{thm:Inertia vertices} below is better than Cvetkovi\'{c} bound, it is interesting to ask whether our bound is tight for the Paley graph on 17 vertices or not.

\begin{theorem}\label{thm:Inertia vertices}
    Let $G=(V,E)$ be a graph. Denote the cardinality of a maximum independent set of $G$ by $\alpha$. For any edge weight $w$ and vertex measure $\mu$, we have $$\alpha\leq  \min_\sigma\#\{k:L_k(\Gamma^{\sigma})=0\}, $$
where the minimum is taken over all possible signatures $\sigma: E\to \{+1,-1\}$ and $\Gamma^{\sigma}=(G,\sigma)$.
\end{theorem}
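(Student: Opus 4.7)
My plan is to exploit the variational characterization of $L_k$ in Theorem \ref{thm:varitional} (with $q=2$) together with the observation that functions supported on an independent set contribute zero to the ``cut-off'' Rayleigh quotient, regardless of the signature.

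First, recall that by Theorem \ref{thm:varitional},
\[
L_k(\Gamma^\sigma) = \min_{B \in \mathcal{F}_k(\mathcal{S}_2)} \max_{g \in B} \mathcal{R}_{2,\infty}^\sigma(g),
\]
where $\mathcal{R}_{2,\infty}^\sigma(g) = \sum_{\{i,j\}\in E} w_{ij}\max\{-g(i)\sigma_{ij}g(j),0\}/\sum_i \mu_i g(i)^2$. Since every summand in the numerator is non-negative and the denominator is positive, $\mathcal{R}_{2,\infty}^\sigma \ge 0$, so $L_k(\Gamma^\sigma) \ge 0$ for every $k$ and every $\sigma$.

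Next, fix a maximum independent set $S \subset V$ with $|S| = \alpha$, and define
\[
B_S := \{g \in \mathcal{S}_2(V) : \supp(g) \subset S\}.
\]
This is precisely the unit sphere of the $\alpha$-dimensional subspace $\{g \in C(V) : \supp(g) \subset S\}$, hence by Proposition \ref{pro:index}(i) we have $\gamma(B_S) = \alpha$, i.e.\ $B_S \in \mathcal{F}_\alpha(\mathcal{S}_2)$. The crucial point is that for \emph{any} edge $\{i,j\} \in E$, independence of $S$ forces $i \notin S$ or $j \notin S$, so $g(i)g(j) = 0$ for every $g \in B_S$. Consequently $\mathcal{R}_{2,\infty}^\sigma(g) = 0$ for every $g \in B_S$, regardless of the signature $\sigma$. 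Plugging $B_S$ into the variational formula for $L_\alpha(\Gamma^\sigma)$ gives $L_\alpha(\Gamma^\sigma) \le 0$, and combined with $L_\alpha(\Gamma^\sigma) \ge 0$ we obtain $L_\alpha(\Gamma^\sigma) = 0$.

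Finally, since $0 \le L_1(\Gamma^\sigma) \le L_2(\Gamma^\sigma) \le \cdots \le L_\alpha(\Gamma^\sigma) = 0$ by monotonicity of the sequence, we conclude $\#\{k : L_k(\Gamma^\sigma) = 0\} \ge \alpha$ for every signature $\sigma$; taking the minimum over $\sigma$ yields the desired bound. There is essentially no obstacle here: the argument is a direct application of the variational formula, and the only nontrivial ingredient is the index computation $\gamma(B_S) = \alpha$, which is immediate from Proposition \ref{pro:index}(i). The advertised independence from the choice of $w$ and $\mu$ is evident, because $B_S$ is defined without reference to them and the conclusion $\mathcal{R}_{2,\infty}^\sigma|_{B_S} \equiv 0$ holds for any weights.
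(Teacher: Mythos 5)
Your proposal is correct and follows essentially the same route as the paper's own proof: both restrict the cut-off Rayleigh quotient $\mathcal{R}^{\sigma}_{2,\infty}$ to the unit sphere of the $\alpha$-dimensional subspace of functions supported on a maximum independent set, use Proposition \ref{pro:index}(i) to get index $\alpha$, observe the quotient vanishes there for every signature, and conclude $L_\alpha(\Gamma^\sigma)=0$ via Theorem \ref{thm:varitional}. Your explicit appeal to non-negativity and monotonicity of the $L_k$ to pass from $L_\alpha=0$ to $\#\{k:L_k=0\}\ge\alpha$ is only spelled out a bit more carefully than in the paper, which leaves that reduction implicit.
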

\begin{theorem}\label{thm:Inertia edge}
    Let $G=(V,E)$ be a graph without isolate vertices. Denote the cardinality of a minimum edge cover of $G$ by $\beta$. For any edge weight $w$ and vertex measure $\mu$, we have $$\beta\geq  \max_\sigma\#\{k:L_k(\Gamma^{\sigma})=0\}, $$\
where the maximum is taken over all possible signatures $\sigma: E\to \{+1,-1\}$ and  $\Gamma^{\sigma}=(G,\sigma)$.
\end{theorem}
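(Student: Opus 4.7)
The plan is to use the variational characterization in Theorem \ref{thm:varitional} with $q=2$ to reinterpret $\#\{k:L_k(\Gamma^\sigma)=0\}$ as the Krasnoselskii genus of the zero set of $\mathcal{R}^\sigma_{2,\infty}$, and then to bound that genus from above by constructing, out of a minimum edge cover, an odd continuous map into $\mathbb{R}^\beta$ that does not vanish on this zero set.

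Fix a signature $\sigma$, write $\Gamma=\Gamma^\sigma$, and set
\begin{equation*}
Z:=\{g\in\mathcal{S}_2(V):\mathcal{R}^\sigma_{2,\infty}(g)=0\}=\{g\in\mathcal{S}_2(V):\sigma_{ij}g(i)g(j)\ge 0\text{ for every }\{i,j\}\in E\},
\end{equation*}
which is closed and symmetric because $\mathcal{R}^\sigma_{2,\infty}\ge 0$ is continuous and even. Using Theorem \ref{thm:varitional}, Lemma \ref{app:2} and Proposition \ref{pro:index}(v), one checks that $L_k(\Gamma)=0$ if and only if some $B\in\mathcal{F}_k(\mathcal{S}_2)$ lies in $Z$, which is equivalent to $\gamma(Z)\ge k$. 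Hence $\#\{k:L_k(\Gamma)=0\}=\gamma(Z)$, and it suffices to prove $\gamma(Z)\le\beta$.

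Let $C=\{e_1,\ldots,e_\beta\}$ be a minimum edge cover of $G$ (which exists because $G$ has no isolated vertex). For each $e=\{i,j\}\in C$, introduce the odd continuous function $\phi_e:\mathcal{S}_2(V)\to\mathbb{R}$ by $\phi_e(g):=g(i)+\sigma_{ij}g(j)$. The key observation is the pointwise inequality
\begin{equation*}
\phi_e(g)^2=g(i)^2+g(j)^2+2\sigma_{ij}g(i)g(j)\ge g(i)^2+g(j)^2\qquad\text{for every }g\in Z,
\end{equation*}
which uses the defining sign condition of $Z$. Summing over $e\in C$ and exploiting that $C$ is an edge cover, every vertex is counted at least once, so
\begin{equation*}
\sum_{e\in C}\phi_e(g)^2\;\ge\;\sum_{v\in V}g(v)^2\;>\;0\qquad\text{for every }g\in Z\subset\mathcal{S}_2(V).
\end{equation*}
Therefore the map $H:Z\to\mathbb{R}^\beta$ defined by $H(g):=(\phi_{e_1}(g),\ldots,\phi_{e_\beta}(g))$ is odd, continuous, and never vanishes on $Z$, so by the definition of the Krasnoselskii genus $\gamma(Z)\le\beta$. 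Taking the maximum over $\sigma$ completes the argument.

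The main obstacle is identifying the correct odd map on $Z$. The natural guess $g(i)\sigma_{ij}g(j)$ is even rather than odd, so one has to pass from the quadratic sign condition $\sigma_{ij}g(i)g(j)\ge 0$ to the linear, odd surrogate $g(i)+\sigma_{ij}g(j)$, whose square recovers the edge contribution plus a guaranteed non-negative cross term. Once this substitution is in place, the edge-cover hypothesis translates directly into the vertex-wise lower bound that forces $H$ to avoid the origin.
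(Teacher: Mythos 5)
Your proof is correct. The core device is the same as the paper's---the odd linear functionals $g\mapsto g(i)+\sigma_{ij}g(j)$ attached to the edges of a minimum edge cover, combined with Krasnoselskii genus---but you deploy it in a genuinely different way. The paper (Lemma \ref{lemma:edge cover} with $k=0$) argues in the contrapositive direction: it takes a minimizing set $B_0$ of genus $\beta+1$, uses Proposition \ref{pro:index}(iii) to find a point $g\in B_0$ where all $\beta$ functionals vanish, and then shows quantitatively that $\mathcal{R}^{\sigma}_{2,\infty}(g)\ge \tfrac{1}{2}w_{\min}/\mu_{\max}$, yielding the stronger conclusion $L_{\beta+1}(\Gamma)\ge \tfrac12 w_{\min}/\mu_{\max}>0$. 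You instead identify $\#\{k:L_k=0\}$ with $\gamma(Z)$ for the zero set $Z$ of $\mathcal{R}^{\sigma}_{2,\infty}$ (this identification is sound, using Lemma \ref{app:2} for attainment, Proposition \ref{pro:index}(v), and the monotonicity $0\le L_1\le\cdots\le L_n$ so that the zeros form an initial segment), and then bound $\gamma(Z)\le\beta$ directly from the definition of the genus via the nonvanishing odd map $H$; the pointwise inequality $\phi_e(g)^2\ge g(i)^2+g(j)^2$ on $Z$ is a clean replacement for the paper's connected-component bookkeeping. Your route is shorter and more transparent for the inertia bound itself, but it discards the explicit positive lower bound on $L_{\beta+1}$, which the paper needs later for the $\Theta(2^p)$ asymptotics in Proposition \ref{pro:asymptotic}; it also does not immediately give the more general statement of Lemma \ref{lemma:edge cover} involving removed vertices, though your argument extends to that case by appending the coordinate functionals $g\mapsto g(i)$ for the removed vertices.
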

Recall that the cut-off adjacencey eigenvalues $L_k, 1\leq k\leq n$ are always nonnegative and do not depend on the choices of the potential functions. One of the advantages  of our bounds is that the number of zero cut-off adjacency eigenvalues does not depend on the choices of edge weights and  vertex measures (see the following Proposition \ref{pro:weight and measure}). So we can arbitrarily choose a convenient edge weight and a vertex measure in applying our bounds to estimate the cardinalities of a maximum independent set and a minimum edge cover of a graph $G$.
\begin{pro}\label{pro:weight and measure}
    Let $\Gamma=(G,\sigma)$ be a signed graph.  For $1\leq i\leq n$, let $L_i$ (resp., $L'_i$) be the cut-off adjacency eigenvalues with respect to the edge weight $w$ (resp., $w'$) and vertex measure $\mu$ (resp., $\mu'$). We have 
    $$\#\{k:L_k=0\}=\#\{k:L_k'=0\}.$$  
\end{pro}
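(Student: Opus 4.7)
My plan is to reduce the statement to the invariance of the Krasnoselskii genus of a certain set (depending only on $G$ and $\sigma$) under odd homeomorphisms, using the variational characterization from Theorem \ref{thm:varitional} with $q=2$.

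By Proposition \ref{lemma:not depend}, the cut-off adjacency eigenvalues are independent of the potential, so I may assume $\kappa\equiv 0$. Taking $q=2$ in Theorem \ref{thm:varitional} yields
\[
L_k=\min_{B\in\mathcal{F}_k(\mathcal{S}_2)}\max_{g\in B}\mathcal{R}^{\sigma}_{2,\infty}(g),\qquad \mathcal{R}^{\sigma}_{2,\infty}(g)=\frac{\sum_{\{i,j\}\in E}w_{ij}\max\{-g(i)\sigma_{ij}g(j),0\}}{\sum_{i=1}^n\mu_i g(i)^2}.
\]
Because every $w_{ij}>0$, the zero-locus of $\mathcal{R}^{\sigma}_{2,\infty}$ on nonzero functions is the closed symmetric cone
\[
Z_\sigma:=\{g\in\mathbb{R}^V\setminus\{0\}: g(i)\sigma_{ij}g(j)\geq 0\text{ for every }\{i,j\}\in E\},
\]
which depends only on $\sigma$ and $G$, not on $w$ or $\mu$.

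Next I would show $\#\{k:L_k=0\}=\gamma(Z_\sigma\cap\mathcal{S}_2)$. Since the $L_k$'s are non-decreasing and non-negative, this count equals the largest $k$ with $L_k=0$. By the min-max formula and non-negativity of $\mathcal{R}^{\sigma}_{2,\infty}$, $L_k=0$ holds iff there is a closed symmetric $B\subseteq Z_\sigma\cap\mathcal{S}_2$ with $\gamma(B)\geq k$. Since $Z_\sigma\cap\mathcal{S}_2$ is itself closed and symmetric, Proposition \ref{pro:index}$(v)$ identifies this largest $k$ as $\gamma(Z_\sigma\cap\mathcal{S}_2)$.

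To finish, I would show that $\gamma(Z_\sigma\cap\mathcal{S}_2)$ does not depend on $w$ or $\mu$. Independence from $w$ is immediate, as $w$ appears nowhere in $Z_\sigma\cap\mathcal{S}_2$. For two measures $\mu,\mu'$, write $\mathcal{S}_2^\mu$ and $\mathcal{S}_2^{\mu'}$ for the corresponding unit $\ell^2$-spheres. The normalization $\Phi(g):=g/\sqrt{\sum_i\mu'_i g(i)^2}$ is an odd homeomorphism $\mathcal{S}_2^\mu\to\mathcal{S}_2^{\mu'}$ (its inverse rescales to unit $\ell^2(\mu)$-norm); since $\Phi$ acts by strictly positive scaling and $Z_\sigma$ is a positively homogeneous symmetric cone, $\Phi$ restricts to an odd homeomorphism $Z_\sigma\cap\mathcal{S}_2^\mu\to Z_\sigma\cap\mathcal{S}_2^{\mu'}$. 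Because the Krasnoselskii genus is preserved under odd homeomorphisms (pulling back by $\Phi$ turns any odd continuous map into $\mathbb{R}^k\setminus\{0\}$ into another such map), $\gamma(Z_\sigma\cap\mathcal{S}_2^\mu)=\gamma(Z_\sigma\cap\mathcal{S}_2^{\mu'})$. The only delicate point is the identification $\#\{k:L_k=0\}=\gamma(Z_\sigma\cap\mathcal{S}_2)$, which rests on monotonicity of the $L_k$ in $k$ together with the cone/symmetry structure of the zero-locus; everything else is bookkeeping.
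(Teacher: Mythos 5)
Your proof is correct, but it takes a genuinely different route from the paper's. The paper argues at finite $p$: with $C_1=\max_{\{i,j\}\in E}w_{ij}/w'_{ij}$ and $C_2=\min_i \mu_i/\mu'_i$, it transports a minimizing set through the renormalization homeomorphism $\mathcal{S}'_p\to\mathcal{S}_p$ to get $\lambda_m^{(p)}\leq (C_1/C_2)\,\eta_m^{(p)}$ for every $p$ and $m$, then lets $p\to\infty$ to conclude $L_m\leq (C_1/C_2)L'_m$, so that one zero count dominates the other; symmetry finishes. You instead pass to the limit functional first via Theorem \ref{thm:varitional} with $q=2$, observe that the zero locus of $\mathcal{R}^{\sigma}_{2,\infty}$ is the purely combinatorial cone $Z_\sigma=\{g\neq 0: g(i)\sigma_{ij}g(j)\geq 0 \ \forall \{i,j\}\in E\}$, and identify $\#\{k:L_k=0\}=\gamma(Z_\sigma\cap\mathcal{S}_2)$ using the attainment of the min-max (Lemma \ref{app:2}), the monotonicity $L_1\leq\cdots\leq L_n\,(\geq 0)$, Proposition \ref{pro:index}$(v)$, and the invariance of the genus under the odd radial-rescaling homeomorphism between the two spheres (a fact the paper itself uses in the proof of Theorem \ref{thm:Interlacing}). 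Each approach buys something: the paper's is more elementary (it needs only the finite-$p$ min-max and the definition of $L_k$ as a limit) and yields the quantitative comparison $L_m\leq (C_1/C_2)L'_m$ for \emph{all} $m$, not just the zero counts; yours produces the cleaner structural identity $\#\{k:L_k=0\}=\gamma(Z_\sigma\cap\mathcal{S}_2)$, which makes the independence from $w$ and $\mu$ manifest and links directly to the inertia bounds of Section \ref{sec:inertia} (an independent set of size $\alpha$ spans a subspace inside $Z_\sigma\cup\{0\}$, recovering Theorem \ref{thm:Inertia vertices}). The one point to state carefully is the attainment of the minimum in the variational characterization, which you implicitly use in the forward direction of ``$L_k=0$ iff there is $B\subseteq Z_\sigma\cap\mathcal{S}_2$ with $\gamma(B)\geq k$''; this is exactly Lemma \ref{app:2}, so no gap remains.
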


\begin{proof}
   We only need prove  $\#\{k:L_k=0\}\geq \#\{k:L_k'=0\}$ for arbitrarily chosen $w,w',\mu$ and $\mu'$. Without loss of generality, we assume the potential function $\kappa\equiv0$.

Denote by $\lambda_k^{(p)}$ (resp., $\eta_k^{(p)}$)  the $k$-th variational eigenvalue of the $p$-Laplacian with respect to $w$ (resp., $w'$) and $\mu$ (resp., $\mu'$). Let $\mathcal{S}_p$ (resp., $\mathcal{S}_p'$) be the space of functions $f\in C(V)$ with $\sum_{i=1}^n|f(i)|^p\mu_i=1$ (resp.,  $\sum_{i=1}^n|f(i)|^p\mu'_i=1$). Then the map $\psi: \mathcal{S}'_p\to \mathcal{S}_p$ defined via
$\psi(f):=f/\left(\sum_{i=1}^n|f(i)|^p\mu_i\right)^{\frac{1}{p}}$
is an odd homeomorphism. Let us set 
\[m:=\#\{k:L_k'=0\},\,\,C_1:=\max_{\{i,j\}\in E}w_{ij}/w'_{ij}\,\,\text{and}\,\,C_2:=\min_{1 \leq i \leq n}\mu_i/\mu'_i.\] For any $p>1$, let $B_p\in \mathcal{F}_m(\mathcal{S}'_p)$ be a minimizing set of $\eta_m^{(p)}$ whose existence is guaranteed by Lemma \ref{app:2}. We compute
\begin{equation*}
    \begin{aligned}
       \lambda_m^{(p)}&=\min_{B\in\mathcal{F}_m(\mathcal{S}_p)}\max_{g\in B}\mathcal{R}_p^{\sigma}(g)\leq \max_{g\in \psi(B_p)}\frac{\sum_{\{i,j\}\in E}w_{ij}|g(i)-\sigma_{ij}g(j)|^p}{\sum_{i=1}^n\mu_i|g(i)|^p}\\
       &\leq\max_{g\in B_p}\frac{\sum_{\{i,j\}\in E}C_1w'_{ij}|g(i)-\sigma_{ij}g(j)|^p}{\sum_{i=1}^nC_2\mu'_i|g(i)|^p}\\ &=\frac{C_1}{C_2}\eta^{(p)}_m. 
    \end{aligned}
\end{equation*}
Then we have
$$L_m=\lim_{p\to \infty}2^{-p}\lambda_m^{(p)}\leq \lim_{p\to \infty}\frac{C_1}{C_2}2^{-p}\eta_m^{(p)}=\frac{C_1}{C_2}L'_m=0.$$
So we have $\#\{k:L_k=0\}\geq \#\{k:L_k'=0\}$. This concludes the proof.
\end{proof}
Next, we prove the main results, Theorem \ref{thm:Inertia vertices} and Theorem \ref{thm:Inertia edge}, of the section.
\begin{proof}[Proof of Theorem \ref{thm:Inertia vertices}]
Given any edge weight $w$ and vertex measure $\mu$, it is sufficient to prove that $L_{\alpha}(\Gamma^{\sigma})=0 $ with $\Gamma^{\sigma}=(G,\sigma)$ for any signature $\sigma$.

     Without loss of generality, we assume that $\{1,2,\ldots,\alpha\}\subset V$ is a maximum independent set. Let $W$  be the linear space spanned by the functions $f_1,\ldots f_{\alpha}$, where $f_i:V\to \mathbb{R}$ is equal to $1$ on the vertex $i$ and $0$ elsewhere. By Proposition \ref{pro:index} $ (i)$, we have $\gamma(W\cap \mathcal{S}_2)=\alpha$. For any $f\in W\cap \mathcal{S}_2$, we derive $$\mathcal{R}^{\sigma}_{2,\infty}(f):=\sum_{\{i, j\}\in E}\max\left\{-w_{ij}f(i)\sigma_{ij}f(j), 0\right\}=0.$$ 
     Hence, we compute by applying Theorem \ref{thm:varitional}
  $$L_{\alpha}(\Gamma^{\sigma})=\min_{B\in \mathcal{F}_{\alpha}(\mathcal{S}_2)}\max_{f\in B}\mathcal{R}^{\sigma}_{2,\infty}(f)\leq \max_{f\in W\cap \mathcal{S}_2}\mathcal{R}^{\sigma}_{2,\infty}(f)=0.$$
This concludes the proof. 
\end{proof}
To prove Theorem \ref{thm:Inertia edge}, it is enough to show the following lemma in which the graph is allowed to have isolated vertices. Indeed, Theorem \ref{thm:Inertia edge} follows from the fact that the following lemma holds for any signature of $G$.
\begin{lemma}\label{lemma:edge cover}
     Let $\Gamma=(G,\sigma)$ be a signed graph with $G=(V,E)$, and $\Gamma'=(G',\sigma)$ be a signed graph where $G'=(V',E')$ is an induced subgraph of $\Gamma$ obtained by removing $k$ vertices from $G$. If the cardinality  of a minimum  edge cover of graph $G'$ is $\beta'$, then we have 
     \[L_{{\beta}'+k+1}(\Gamma)\geq\frac{1}{2}\frac{w_{\min}}{\mu_{\max}}>0,\]
     where $w_{\min}:=\min_{e\in E}w_e$ and $\mu_{\max}:=\max_{i\in V}\mu_i$ are the minimal edge weight and the maximal vertex measure, respectively.
\end{lemma}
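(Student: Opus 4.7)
The plan is to reduce the general case to $k=0$ via the interlacing theorem and then exhibit an explicit linear (hence odd continuous) test map whose zero set in any admissible $B$ is forced to be non-empty by Proposition~\ref{pro:index}(iii), at which point a direct computation delivers the quantitative lower bound on $\mathcal{R}^\sigma_{2,\infty}$.

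First I will invoke Theorem~\ref{thm:Interlacing} with $m=k$ to get $L_{\beta'+1}(\Gamma')\leq L_{\beta'+k+1}(\Gamma)$, reducing everything to proving $L_{\beta'+1}(\Gamma')\geq \tfrac{1}{2}w_{\min}/\mu_{\max}$ for $\Gamma'$ itself. Fix a minimum edge cover $F=\{e_1,\ldots,e_{\beta'}\}$ of $G'$ with $e_l=\{i_l,j_l\}$; by definition every vertex of $V'$ is an endpoint of some $e_l$. Working with the variational characterization at $q=2$ from Theorem~\ref{thm:varitional}, I define the linear map $h:\mathcal{S}_2(V')\to\mathbb{R}^{\beta'}$ by
\[
h_l(g):=g(i_l)+\sigma_{i_l j_l}g(j_l),\qquad l=1,\ldots,\beta',
\]
which is odd and continuous. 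For any $B\in\mathcal{F}_{\beta'+1}(\mathcal{S}_2(V'))$, Proposition~\ref{pro:index}(iii) applied to $h|_B$ yields $\gamma(h^{-1}(0)\cap B)\geq 1$, so I can pick $g\in B$ with $h(g)=0$, i.e., $g(i_l)=-\sigma_{i_l j_l}g(j_l)$ for every $l$.

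At such a $g$, the constraint $h(g)=0$ forces $-g(i_l)\sigma_{i_l j_l}g(j_l)=g(j_l)^2=g(i_l)^2\geq 0$, so each $e_l\in F$ contributes exactly $w_{i_l j_l}\cdot\tfrac{g(i_l)^2+g(j_l)^2}{2}$ to $\mathcal{R}^\sigma_{2,\infty}(g)$. Restricting the edge sum in $\mathcal{R}^\sigma_{2,\infty}$ to $F$, using $w_{ij}\geq w_{\min}$, and invoking the edge-cover property $\sum_{l}(g(i_l)^2+g(j_l)^2)\geq\sum_{v\in V'}g(v)^2$, I obtain
\[
\mathcal{R}^\sigma_{2,\infty}(g)\geq \frac{w_{\min}}{2}\sum_{v\in V'}g(v)^2\geq \frac{w_{\min}}{2\mu_{\max}}\sum_{v\in V'}\mu_v\, g(v)^2=\frac{w_{\min}}{2\mu_{\max}},
\]
using $g\in\mathcal{S}_2(V')$ in the last step. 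Maximizing over $g\in B$ and minimizing over $B$ in the variational characterization yields the reduced inequality, which together with the interlacing step completes the proof.

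The main subtlety will be the signature-aware choice of $h$: the combination $g(i_l)+\sigma_{i_l j_l}g(j_l)$ is precisely the one whose vanishing pins each endpoint pair into the bad-sign configuration $g(i_l)\sigma_{i_l j_l}g(j_l)\leq 0$ at which the cut-off Rayleigh quotient actually picks up a positive contribution; any less careful choice of signs would leave room for $g$ with $\mathcal{R}^\sigma_{2,\infty}(g)=0$ and break the quantitative estimate. Everything else is routine index-theoretic bookkeeping and a Cauchy--Schwarz-type comparison once this sign is fixed.
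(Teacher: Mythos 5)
Your proof is correct and its core is the same as the paper's: both hinge on applying Proposition~\ref{pro:index}(iii) to the odd linear map whose components are $g(i_l)+\sigma_{i_l j_l}g(j_l)$ over the cover edges, which forces a $g$ in any admissible $B$ at which every cover edge contributes $w_{i_lj_l}\bigl(g(i_l)^2+g(j_l)^2\bigr)/2$ to $\mathcal{R}^\sigma_{2,\infty}$. You differ in two minor, harmless ways: you outsource the $k$ removed vertices to Theorem~\ref{thm:Interlacing} rather than appending the coordinates $f(1),\dots,f(k)$ to the test map as the paper does (the interlacing theorem's proof uses exactly that trick, so this is a repackaging), and your final estimate via the covering inequality $\sum_l\bigl(g(i_l)^2+g(j_l)^2\bigr)\geq\sum_{v\in V'}g(v)^2$ is a cleaner route to the constant $\tfrac{1}{2}w_{\min}/\mu_{\max}$ than the paper's analysis of the connected components of the cover graph, which needs $|E_m|\geq |V_m|-1$ and $|V_m|\geq 2$.
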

\begin{proof}
      Without loss of generality, we assume  $G'$ is induced by $V\setminus \{1,2,\ldots,k\}$ and  $\{e_1,e_2,\ldots,e_{\beta'}\}$ is a minimum  edge cover of the graph $G'$ where $e_i:=\{x_i,y_i\}$. 
      Let $B_0\in\mathcal{F}_{k+\beta'+1}(\mathcal{S}_2(V))$ be a minimizing set of $L_{k+\beta'+1}$. The existence of $B_0$ is due to the Lemma \ref{app:2}. Then, we define an odd continuous map $\varphi:B_0 \to \mathbb{R}^{\beta'+k}  $ by 
      \begin{equation*}
          \begin{aligned}
            \left(f(1),f(2),\ldots,f(n)\right)\mapsto \left(f(1), f(2) ,\ldots,f(k),f(x_1)+\sigma_{x_1y_1}f(y_1),\ldots, f(x_{\beta'})+\sigma_{x_{\beta}y_{\beta'}}f(y_{\beta'})  \right).
          \end{aligned}
      \end{equation*}
           Using Proposition \ref{pro:index} $(iii)$, we  get $\gamma(B_0\cap \varphi^{-1}(0))\geq \beta'+k+1-\beta'-k=1$. Hence, the set $B_0\cap \varphi^{-1}(0)$ is nonempty. Let us pick any $g\in B_0\cap \varphi^{-1}(0)$. By definition of the map $\varphi$, we have $g(i)=0$ for any $1\leq i \leq k$ and $g(x_j)+\sigma_{x_jy_j}g(y_j)=0$ for any $1\leq j \leq \beta'$.

 Construct a new graph $G''=(V'',E'')$ where $V''=V\setminus \{1,2,\ldots,k\}$ and $E''=\{e_1,e_2,\ldots,e_{\beta'}\}\subset E$. Then, we have $|g(i)|=|g(j)|$ for any two vertices $i$ and $j$ lying in the same connected component of $G''$. Assume that $G''$ has $l$ connected components, denoted by $\{G_m:=(V_m,E_m)\}_{m=1}^l$.  For $1\leq m \leq l$, we arbitrarily pick one vertex $z_m$ from $V_m$. 
 Then, we compute
 \begin{equation*}
 \begin{aligned}
          \mathcal{R}^{\sigma}_{2,\infty}(g)&=\sum_{\{i, j\}\in E}\max\{-w_{ij}g(i)\sigma_{ij}g(j), 0\}\\
          &\geq \sum_{\{i, j\}\in E''}\max\{-w_{ij}g(i)\sigma_{ij}g(j), 0\} \\
          &=\sum_{k=1}^lw(E_k)|g(z_k)|^2=\frac{\sum_{k=1}^l w(E_k)|g(z_k)|^2}{\sum_{k=1}^l \mu(V_k)|g(z_k)|^2}\geq C_0:= \min_{1\leq k\leq l}\frac{w(E_k)}{\mu(V_k)},
 \end{aligned}
 \end{equation*}
 where $w(E_{k}):=\sum_{e\in E_k}w_e$ and $\mu(V_k):=\sum_{i\in V_k}\mu_i$.
Applying Theorem \ref{thm:varitional}, we obtain
 \begin{equation*}
     L^{}_{k+\beta'+1}(\Gamma)=\min_{B\in \mathcal{F}_{k+\beta'+1}(\mathcal{S}_2(V))}\max_{f\in B}\mathcal{R}^{\sigma}_{2,\infty}(f)=\max_{f\in B_0}\mathcal{R}^{\sigma}_{2,\infty}(f)\geq\mathcal{R}^{\sigma}_{2,\infty}(g)\geq C_0.
 \end{equation*}
 Observe that 
\[C_0 \geq \min_{1\leq k\leq l}\frac{w_{\min}|E_k|}{\mu_{\max}|V_k|}=\frac{w_{\min}}{\mu_{\max}}\min_{1\leq k\leq l}\frac{|V_k|-1}{|V_k|}\geq\frac{1}{2}\frac{w_{\min}}{\mu_{\max}}.\]
This concludes the proof.
\end{proof}

Next, we give some remarks about Theorem \ref{thm:Inertia vertices} and Theorem \ref{thm:Inertia edge}. 

For a signed graph $\Gamma=(G,\sigma)$ whose cardinality of a maximum independent set is $\alpha$, we get from Theorem \ref{thm:Inertia vertices} that the $\alpha$-th variational eigenvalue $\lambda_{\alpha}^{(p)}$ of the $p$-Laplacian times $2^{-p}$ tends to zero as $p$ tends to infinity. The next proposition shows how fast it tends to zero. Indeed, the $k$-th variational eigenvalue $\lambda_k^{(p)}$ is uniformly bounded with respect to $p$ whenever $k$ is no greater than $\alpha$.
\begin{pro}\label{pro:alphabdd}
   Let $\Gamma=(G,\sigma)$ be a signed graph where $G=(V,E)$. If the cardinality of a maximal independent set of the graph $G=(V,E)$ is $\alpha$, then $$\lambda_{\alpha}^{(p)}\leq \max_{1\leq i\leq n}\frac{\sum_{j\sim i}w_{ij}+\kappa_i}{\mu_i}.$$
\end{pro}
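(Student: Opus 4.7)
The plan is to build an explicit test set in $\mathcal{F}_\alpha(\mathcal{S}_p)$ that realizes the desired upper bound via the min-max characterization $\lambda_\alpha^{(p)} = \min_{B\in \mathcal{F}_\alpha(\mathcal{S}_p)} \max_{f\in B} \mathcal{R}_p^\sigma(f)$. First I would fix a maximum independent set $I = \{i_1, \ldots, i_\alpha\} \subseteq V$ and let $W \subset C(V)$ be the $\alpha$-dimensional linear subspace of functions supported on $I$, namely the span of the indicator functions $\{f_{i_k}\}_{k=1}^\alpha$ (with $f_{i_k}$ equal to $1$ at $i_k$ and $0$ elsewhere). By Proposition \ref{pro:index}(i), $\gamma(W \cap \mathcal{S}_p) = \alpha$, so $B := W \cap \mathcal{S}_p$ is a closed symmetric subset of $\mathcal{S}_p$ of index exactly $\alpha$, hence lies in $\mathcal{F}_\alpha(\mathcal{S}_p)$.

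The key step is to evaluate $\mathcal{R}_p^\sigma(f)$ for an arbitrary $f \in B$. Since $I$ is independent, no edge of $G$ has both endpoints in $I$, so for any edge $\{i,j\} \in E$ at least one of $f(i), f(j)$ vanishes; in particular each neighbour $j$ of an $i \in I$ lies outside $I$ and contributes $w_{ij}|f(i)|^p$ to the edge term. Consequently the edge sum collapses to $\sum_{i\in I} |f(i)|^p \sum_{j\sim i} w_{ij}$, the potential term to $\sum_{i\in I} \kappa_i |f(i)|^p$, and the denominator to $\sum_{i\in I} \mu_i |f(i)|^p$. The Rayleigh quotient becomes
\[
\mathcal{R}_p^\sigma(f) \;=\; \frac{\sum_{i \in I}\bigl(\sum_{j\sim i} w_{ij}+\kappa_i\bigr)|f(i)|^p}{\sum_{i \in I}\mu_i |f(i)|^p}.
\]

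Because the weights $\mu_i|f(i)|^p$ are nonnegative and $\mu_i > 0$, the right-hand side is a convex combination of the numbers $\tfrac{\sum_{j\sim i}w_{ij}+\kappa_i}{\mu_i}$ for $i \in I$, hence bounded above by $\max_{i\in I}\tfrac{\sum_{j\sim i}w_{ij}+\kappa_i}{\mu_i}$, and a fortiori by the maximum over all vertices. Taking the supremum over $f \in B$ and then invoking the min-max formula yields the claimed inequality.

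There is really no main obstacle: the proof is a one-shot application of the variational characterization to the tautological test space supported on a maximum independent set. The only mildly subtle observation is that $\kappa_i$ may be negative, but since $|f(i)|^p \geq 0$ and $\mu_i > 0$ the convex-combination bound $\tfrac{\sum a_i x_i}{\sum b_i x_i} \leq \max_i \tfrac{a_i}{b_i}$ still applies unconditionally, so no sign restriction on the potential is needed.
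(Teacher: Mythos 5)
Your proposal is correct and is essentially the paper's own proof: the same test space $W\cap\mathcal{S}_p$ spanned by indicators of a maximum independent set, the same observation that the Rayleigh quotient on it reduces to a ratio $\sum_{i\in I}\bigl(\sum_{j\sim i}w_{ij}+\kappa_i\bigr)|f(i)|^p\big/\sum_{i\in I}\mu_i|f(i)|^p$ bounded by the claimed maximum, and the same appeal to Proposition \ref{pro:index}(i) and the min-max characterization. Your remark that no sign condition on $\kappa$ is needed is also consistent with the paper's statement.
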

\begin{proof}
    As in the proof of Theorem \ref{thm:Inertia vertices}, we assume that $\{1,2,\ldots,\alpha\}\subset V$ is a maximal independent set. Similarly,  define $W$ to be the linear space spanned by the functions $g_1,\ldots g_{\alpha}$, where the $g_i(i)=1$ and $g_i(j)=0$ when $j\neq i$ for $1\leq i \leq \alpha$. By Proposition \ref{pro:index} $(i)$, we get $\gamma(W\cap \mathcal{S}_p)=\alpha$ for any $p> 1$. Take any $f\in W\cap \mathcal{S}_p$, we obtain
\begin{equation*} \mathcal{R}^{\sigma}_p(f)=\frac{\sum_{\{i,j\}\in E}w_{ij}|f(i)-\sigma_{ij}f(j)|^p+\sum_{i=1}^n\kappa_i|f(i)|^p }{\sum_{i=1}^{n}\mu_i|f(i)|^p}=\frac{\sum_{i=1}^{\alpha}\left(\sum_{j\sim i}w_{ij}+\kappa_i\right)|f(i)|^p}{\sum_{i=1}^{\alpha}\mu_i\left|f(i)\right|^p}\leq C,
\end{equation*}
where $C:=\max_{1\leq i\leq n}\frac{\sum_{j\sim i}w_{ij}+\kappa_i}{\mu_i}$. By definition, we have
\begin{equation*}
    \lambda_{\alpha}^{(p)}=\min_{B\in \mathcal{F}_{\alpha}(\mathcal{S}_p)}\max_{g\in B}\mathcal{R}^{\sigma}_p(g)\leq\max_{f\in W\cap \mathcal{S}_p}\mathcal{R}^{\sigma}_p(f)\leq C. 
\end{equation*}
The proof is completed.
\end{proof}

For a graph with isolated vertices, there exists no edge covers. We can first remove all isolated vertices and then apply the above Lemma \ref{lemma:edge cover} to derive an upper bound of the number of the zero elements in the sequence $(L_1(\Gamma),L_2(\Gamma)\ldots,L_n(\Gamma))$.  For a graph without any isolated vertices, we derive directly the following corollary.
 \begin{cor}\label{cor:edge cover 2}
     Let $\Gamma=(G,\sigma)$ be a signed graph without isolated vertices. Let $\beta$ be the cardinality of a minimum edge cover of $G$. Then, we have $L_{\beta+1}(\Gamma)>0$.
\end{cor}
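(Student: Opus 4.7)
The plan is to obtain this corollary as a direct specialization of Lemma~\ref{lemma:edge cover}. That lemma is stated allowing an induced subgraph $\Gamma'$ obtained by removing $k$ vertices, with $\beta'$ denoting the cardinality of a minimum edge cover of the resulting graph (which may have isolated vertices). The conclusion is then $L_{\beta'+k+1}(\Gamma)\geq \tfrac{1}{2}\,w_{\min}/\mu_{\max}>0$.

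For the corollary, the hypothesis that $\Gamma$ has no isolated vertices guarantees that $G$ itself admits an edge cover, so the quantity $\beta$ is well defined. I would therefore apply Lemma~\ref{lemma:edge cover} with the choices $k=0$ and $\Gamma'=\Gamma$; in this case the induced subgraph coincides with $\Gamma$ and $\beta'=\beta$. Substituting these values into the conclusion of Lemma~\ref{lemma:edge cover} yields
\begin{equation*}
L_{\beta+1}(\Gamma)=L_{\beta'+k+1}(\Gamma)\geq \frac{1}{2}\frac{w_{\min}}{\mu_{\max}}>0,
\end{equation*}
which is precisely the statement of the corollary.

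There is essentially no obstacle: the only thing to verify is that the hypotheses of Lemma~\ref{lemma:edge cover} are satisfied with $k=0$, and that the quantities $w_{\min}>0$ and $\mu_{\max}<\infty$ on the right-hand side are meaningful, both of which follow from our standing assumption that edge weights $w$ and vertex measures $\mu$ take values in $\mathbb{R}^+$ on a finite graph. Hence the proof reduces to a one-line invocation of the lemma.
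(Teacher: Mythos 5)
Your proposal is correct and matches the paper's own derivation: the authors state that the corollary follows directly from Lemma~\ref{lemma:edge cover} applied with no vertices removed, i.e., $k=0$, $\Gamma'=\Gamma$, $\beta'=\beta$, exactly as you do. The only point worth noting is that the lemma's phrasing (``removing $k$ vertices'') should be read as permitting $k=0$, which its proof indeed allows.
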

On the other hand, for a signed graph $\Gamma$ without isolated vertices, we can still apply Lemma \ref{lemma:edge cover} to derive more upper bounds of the number of zero elements in the sequence $(L_1(\Gamma),L_2(\Gamma)\ldots,L_n(\Gamma))$ by removing certain vertices and using the minimum edge covers of the resulting subgraphs. 
However, the proposition below tells that those new-obtained upper bounds do not improve the one from Corollary \ref{cor:edge cover 2}.
\begin{pro}
  Let $G=(V,E)$ be a graph without isolated vertices and $G'=(V',E')$ be an induced subgraph of $G$ obtained by removing $k$ vertices from $G$. If the cardinality of a minimum edge cover of $G'$ is $m$, then  the cardinality of a minimum edge cover of $G$ is less than or equal to $m+k$.
\end{pro}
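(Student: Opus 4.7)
The plan is to construct an edge cover of $G$ directly from a minimum edge cover of $G'$ by adding at most one extra edge per removed vertex. Let $F' \subseteq E'$ be a minimum edge cover of $G'$ with $|F'| = m$, and let $U := V \setminus V'$ be the set of $k$ removed vertices. Note that $F'$ is already a subset of $E$ covering every vertex of $V'$, so the only vertices of $V$ that may fail to be covered by $F'$ are those in $U$.

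Since $G$ has no isolated vertices, for each $u \in U$ there exists at least one edge $e_u \in E$ incident to $u$ (in $G$). Define
\[
F := F' \cup \{\, e_u : u \in U \,\}.
\]
Then every vertex in $V'$ is covered by $F'$, and every vertex in $U$ is covered by the corresponding chosen edge $e_u$, so $F$ is an edge cover of $G$. Since $|F| \le |F'| + |U| = m + k$, the minimum edge cover of $G$ has cardinality at most $m + k$, which is the desired inequality.

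There is no real obstacle here: the only subtlety is ensuring that each removed vertex has some incident edge in $G$, which is guaranteed by the assumption that $G$ has no isolated vertices. Note that some edges $e_u$ might coincide (e.g.\ an edge between two removed vertices serves both) or might already lie in $F'$ if they happen to be edges of $G'$ --- in either case the bound $|F| \le m+k$ only improves. Hence no additional bookkeeping is required, and the bound follows at once.
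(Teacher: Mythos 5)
Your proof is correct and rests on the same core idea as the paper's: use the no-isolated-vertices hypothesis to pick one incident edge of $G$ for each removed vertex and adjoin these to a minimum edge cover of $G'$. The only difference is presentational --- the paper proves the case $k=1$ and then recurses, while you add all $k$ edges in a single step; your one-shot version is arguably cleaner, since a literal recursion must be careful about intermediate induced subgraphs in which a removed vertex could become isolated, an issue your argument sidesteps by always choosing the edges $e_u$ from $G$ itself.
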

\begin{proof}
  We first prove the case when $k=1$. Assume $G'$ is induced by $V\setminus\{x\}$ and a minimum edge cover of $G'$ is $\{e_1,e_2,\ldots,e_m\}$. Since $x$ is not an  isolated vertex, there exists an edge $e\in E$ such that $x$ is incident with $e$. So $\{e_1,e_2,\ldots,e_m,e\}$ is an edge cover of $G$. This proves the case $k=1$.
In general, the proof follows by applying the above arguments recursively. 
\end{proof}

As an application of Lemma \ref{lemma:edge cover}, we have the following asymptotic result about the $p$-Laplacian variational eigenvalues. 

\begin{pro}\label{pro:asymptotic}
    Let $\Gamma=(G,\sigma)$ be a signed graph without isolated vertices, and $\beta$ be the cardinality of a minimum edge cover of $G$. Then we have for any $\beta+1\leq k\leq n$
    \[2^{p-1}\frac{w_{\min}}{\mu_{\max}}-\max_{1\leq i\leq n}\left|\frac{\kappa_i}{\mu_i}\right|\leq \lambda_k^{(p)}\leq 2^{p-1}\max_{1\leq i\leq n}\frac{\sum_{j\sim i}w_{ij}+\kappa_i}{\mu_i}.\]
    That is, $\lambda_k^{(p)}=\Theta(2^{p})$ as $p$ tends to infinity, whenever $k> \beta$.
\end{pro}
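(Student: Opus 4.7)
My plan is to prove the upper and lower bounds separately, with the lower bound being the substantive half and the upper bound a straightforward convexity estimate.

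For the lower bound, I would start by applying Lemma \ref{lemma:edge cover} with no vertices removed (the $k=0$ case, so $\beta' = \beta$), which immediately yields $L_{\beta+1}(\Gamma) \geq \frac{1}{2}\frac{w_{\min}}{\mu_{\max}}$. Since the sequence $L_1 \leq L_2 \leq \cdots \leq L_n$ is non-decreasing, the same bound holds for $L_k$ whenever $k \geq \beta+1$. To transfer this bound back to $\lambda_k^{(p)}$, I would first reduce to the case $\kappa \equiv 0$ via Proposition \ref{lemma:not depend}: if $\eta_k^{(p)}$ denotes the variational eigenvalues with zero potential, then $\lambda_k^{(p)} \geq \eta_k^{(p)} - \mathcal{C}$ where $\mathcal{C} = \max_i |\kappa_i/\mu_i|$. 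Next, since $\kappa \equiv 0 \geq 0$, the monotonicity inequality \eqref{eq:de} of Theorem \ref{thm:Monotonicity} says that $p \mapsto 2^{-p}\eta_k^{(p)}$ is non-increasing, and because it converges to $L_k$ as $p \to \infty$, it must dominate $L_k$ at every finite $p$. Combining these steps gives $\lambda_k^{(p)} \geq 2^p L_k - \mathcal{C} \geq 2^{p-1}\frac{w_{\min}}{\mu_{\max}} - \mathcal{C}$, which is exactly the required lower bound.

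For the upper bound, I would work directly from the Rayleigh quotient rather than through $L_k$. Using the convexity estimate $|a-\sigma_{ij} b|^p \leq (|a|+|b|)^p \leq 2^{p-1}(|a|^p + |b|^p)$, any test function $f \in \mathcal{S}_p$ satisfies
\begin{equation*}
\sum_{\{i,j\}\in E} w_{ij}|f(i) - \sigma_{ij}f(j)|^p \leq 2^{p-1} \sum_{i} \Bigl(\sum_{j\sim i} w_{ij}\Bigr)|f(i)|^p.
\end{equation*}
Assuming $\kappa \geq 0$ (so that $\kappa_i \leq 2^{p-1}\kappa_i$ for $p > 1$), the numerator of $\mathcal{R}^{\sigma}_p(f)$ is bounded above by $2^{p-1}\sum_i (\sum_{j\sim i} w_{ij} + \kappa_i) |f(i)|^p$, and dividing by $\sum_i \mu_i |f(i)|^p = 1$ gives $\mathcal{R}^{\sigma}_p(f) \leq 2^{p-1} \max_i \frac{\sum_{j\sim i}w_{ij}+\kappa_i}{\mu_i}$. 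Since this is a uniform bound on the Rayleigh quotient, it bounds $\lambda_n^{(p)}$ from above, and hence every $\lambda_k^{(p)}$. The asymptotic statement $\lambda_k^{(p)} = \Theta(2^p)$ then follows immediately.

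There is no real obstacle here; the work has essentially been done in Lemma \ref{lemma:edge cover}, Theorem \ref{thm:Monotonicity}, and Proposition \ref{lemma:not depend}. The only subtlety is making sure the chain of inequalities for the lower bound is assembled in the right order, and keeping track of the additive shift $\mathcal{C}$ coming from a general (not necessarily nonnegative) potential. For the upper bound, the mild assumption $\kappa \geq 0$ is consistent with the hypothesis used throughout Section \ref{sec:mono}, and for sign-changing $\kappa$ one can alternatively reduce to $\kappa \equiv 0$ via Proposition \ref{lemma:not depend} at the cost of absorbing $\mathcal{C}$ into the constant.
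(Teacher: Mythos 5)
Your proposal is correct and follows essentially the same route as the paper: the lower bound via Lemma \ref{lemma:edge cover} combined with the monotonicity \eqref{eq:de} and Proposition \ref{lemma:not depend} (the paper compresses this into the single chain $2^{-p}(\lambda_k^{(p)}+\mathcal{C})\geq L_k\geq \tfrac{1}{2}\tfrac{w_{\min}}{\mu_{\max}}$), and the upper bound via the convexity estimate on the Rayleigh quotient. Your explicit remark that the upper bound as stated needs $\kappa\geq 0$ (since $\kappa_i\leq 2^{p-1}\kappa_i$ fails for negative $\kappa_i$) is a point the paper glosses over, and your suggested fix via Proposition \ref{lemma:not depend} is the right one.
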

\begin{proof}
    The upper bound follows from a direct observation that
   \[\mathcal{R}^\sigma_p(f)\leq 2^{p-1} \max_{1\leq i\leq n}\frac{\sum_{j\sim i}w_{ij}+\kappa_i}{\mu_i}\] 
   for any nonzero function $f\in C(V)$. The lower bound follows from the monotonicity \eqref{eq:de}  in Theorem \ref{thm:Monotonicity}, Proposition \ref{lemma:not depend} and Lemma \ref{lemma:edge cover} as below: For $k>\beta$
   \[2^{-p}\left(\lambda_k^{(p)}+\mathcal{C}\right)\geq L_{k}\geq \frac{1}{2}\frac{w_{\min}}{\mu_{\max}},\]
   where $\mathcal{C}:=\max_{1\leq i\leq n}\left|\frac{\kappa_i}{\mu_i}\right|$.
\end{proof}

Combining Theorem \ref{thm:Inertia vertices}, Proposition \ref{pro:alphabdd}, Corollary \ref{cor:edge cover 2}, and Proposition \ref{pro:asymptotic}, we get the following characterization of the growth rate of $p$-Laplacian variational eigenvalues of bipartite graphs.
\begin{cor}
  If $\Gamma=(G,\sigma)$ is a bipartite graph without isolated vertices, and $\beta$ be the cardinality of a minimum  edge cover of $G$. Then we have $L_\beta(\Gamma)=0$ and $L_{\beta+1}(\Gamma)>0$. Moreover, the $p$-Laplacian variational eigenvalues $\lambda_k^{(p)}$ is bounded for $1\leq k\leq \beta$, and is $\Theta(2^p)$ for $\beta+1\leq k\leq n$, as $p$ tends to infinity.
\end{cor}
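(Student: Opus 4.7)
The plan is to combine the inertia bounds of Theorem \ref{thm:Inertia vertices} and Corollary \ref{cor:edge cover 2} with the classical identity $\alpha=\beta$ that holds for bipartite graphs without isolated vertices. Once this identity is in hand, the two inertia bounds pinch together to pin down the exact count of vanishing cut-off adjacency eigenvalues, and the asymptotic statements on $\lambda_k^{(p)}$ follow immediately from Proposition \ref{pro:alphabdd} and Proposition \ref{pro:asymptotic}.

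First I would record the graph-theoretic input: for a bipartite graph $G$ without isolated vertices, the cardinality $\alpha$ of a maximum independent set equals the cardinality $\beta$ of a minimum edge cover. This is a standard consequence of K\"onig's theorem (in bipartite graphs the maximum matching number $\nu$ equals the minimum vertex cover number $\tau$) together with the two Gallai identities $\alpha+\tau=n$ and $\nu+\beta=n$, the latter being valid for any graph without isolated vertices. Subtracting these identities yields $\alpha=\beta$.

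Next I would apply the two inertia bounds in tandem. Theorem \ref{thm:Inertia vertices}, applied to the given signature $\sigma$, gives $\#\{k:L_k(\Gamma)=0\}\geq \alpha$, while Corollary \ref{cor:edge cover 2} gives $L_{\beta+1}(\Gamma)>0$ and hence $\#\{k:L_k(\Gamma)=0\}\leq \beta$. Using $\alpha=\beta$, these two estimates collapse to an equality: exactly $\beta$ of the $L_k$'s vanish. Since $(L_1,\ldots,L_n)$ is non-decreasing, this forces $L_\beta(\Gamma)=0$ and $L_{\beta+1}(\Gamma)>0$, as claimed.

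Finally, for the growth rate of the variational eigenvalues, I would split into the two ranges. For $1\leq k\leq \beta=\alpha$, Proposition \ref{pro:alphabdd} supplies a uniform $p$-independent upper bound on $\lambda_\alpha^{(p)}$, and $\lambda_k^{(p)}\leq \lambda_\alpha^{(p)}$ then gives uniform boundedness for all such $k$. For $\beta+1\leq k\leq n$, Proposition \ref{pro:asymptotic} directly supplies the two-sided bound $c\,2^p\leq \lambda_k^{(p)}\leq C\,2^p$. There is no genuine obstacle to overcome here; the substantive content is the recognition that the bipartite assumption (with no isolated vertices) forces $\alpha=\beta$, which is precisely the bridge that allows the independent-set inertia bound and the edge-cover inertia bound to meet and produce a sharp threshold at $k=\beta$.
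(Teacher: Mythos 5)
Your proposal is correct and follows essentially the same route as the paper, whose proof consists of exactly the observation that $\alpha=\beta$ for bipartite graphs without isolated vertices, combined with Theorem \ref{thm:Inertia vertices}, Proposition \ref{pro:alphabdd}, Corollary \ref{cor:edge cover 2}, and Proposition \ref{pro:asymptotic}. Your write-up merely supplies the standard K\"onig--Gallai justification of $\alpha=\beta$ and spells out how the cited bounds pinch together, which the paper leaves implicit.
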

\begin{proof}
This is due to the following fact: For a bipartite grpah without isolated vertices, the cardinality of a minimum edge cover coincides with the cardinality of a maximum independent set.
\end{proof}

To conclude this section, we prove that our Theorem \ref{thm:Inertia vertices} covers the Cvetkovi$\acute{c}$ bound.

For any $M=(m_{ij})_{1\leq i,j\leq n}\in \mathcal{M}$, there is no restriction of $m_{ij}$ if $i\sim j$.  Next, We prove that if we request that $m_{ij}\neq 0$ whenever $i\sim j$, the upper bound is the same as the Cvetkovi$\acute{c}$ bound.
\begin{lemma}\label{lemma:M_0}
      Let $G=(V,E)$ be 
 a graph and $\mathcal{M}$ be defined as in Theorem \ref{thm:Inertia bound}. Define
     $$\mathcal{M}_0:=\{M: M=(m_{ij})_{1\leq i,j\leq n} \text{ is a symmetric matrix such that $m_{ij}=0$ if and only if $i\nsim j$}\}.$$ Then, we have
      $$
    \min_{M\in \mathcal{M}_0}\min\{n-n^+(M),n-n^-(M)\}=\min_{M\in \mathcal{M}}\min\{n-n^+(M),n-n^-(M)\}.$$
\end{lemma}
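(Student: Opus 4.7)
The inclusion $\mathcal{M}_0 \subset \mathcal{M}$ trivially gives the direction $\min_{M\in \mathcal{M}_0}\min\{n-n^+(M),n-n^-(M)\}\geq\min_{M\in \mathcal{M}}\min\{n-n^+(M),n-n^-(M)\}$, so the entire content of the lemma is the reverse inequality. The strategy is a small symmetric perturbation: given any $M \in \mathcal{M}$, I will build an $M_\epsilon \in \mathcal{M}_0$ arbitrarily close to $M$ satisfying $n^+(M_\epsilon)\geq n^+(M)$ and $n^-(M_\epsilon)\geq n^-(M)$, which immediately implies $\min\{n-n^+(M_\epsilon),n-n^-(M_\epsilon)\}\leq \min\{n-n^+(M),n-n^-(M)\}$.

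For the construction, let $N=(n_{ij})_{1\leq i,j\leq n}$ be the symmetric $0$--$1$ matrix with $n_{ij}=1$ if $i\sim j$ and $n_{ij}=0$ otherwise, and set $M_\epsilon := M+\epsilon N$. Since $M,N$ are both supported on the edge set, $(M_\epsilon)_{ij}=0$ whenever $i\nsim j$. For each edge $\{i,j\}$ we have $(M_\epsilon)_{ij}=m_{ij}+\epsilon$, which is nonzero for every $\epsilon>0$ outside the finite exceptional set $\{-m_{ij}:i\sim j,\,m_{ij}\ne 0\}$. Hence $M_\epsilon\in\mathcal{M}_0$ for all sufficiently small $\epsilon>0$.

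The final step is to invoke continuity of eigenvalues of real symmetric matrices (e.g.\ as a consequence of Weyl's inequalities): if the eigenvalues of $M$ are $\lambda_1\leq\cdots\leq\lambda_n$ and those of $M_\epsilon$ are $\lambda_1(\epsilon)\leq\cdots\leq\lambda_n(\epsilon)$, then $\lambda_i(\epsilon)\to\lambda_i$ as $\epsilon\to 0$. Consequently each strictly positive $\lambda_i$ remains strictly positive, and each strictly negative $\lambda_i$ remains strictly negative, provided $\epsilon$ is small enough; zero eigenvalues may additionally become positive or negative. Therefore $n^+(M_\epsilon)\geq n^+(M)$ and $n^-(M_\epsilon)\geq n^-(M)$ for all sufficiently small $\epsilon>0$, so that
\[n-n^+(M_\epsilon)\leq n-n^+(M)\quad\text{and}\quad n-n^-(M_\epsilon)\leq n-n^-(M).\]
Taking the minimum and then the infimum over $M\in\mathcal{M}$ yields the desired inequality.

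There is no substantive obstacle here; the only care needed is the simultaneous choice of $\epsilon>0$ small enough (i) to avoid the finite set $\{-m_{ij}:i\sim j,\,m_{ij}\ne 0\}$ so that $M_\epsilon\in\mathcal{M}_0$, and (ii) to keep every nonzero eigenvalue of $M$ on its own side of $0$, which is guaranteed by continuity of spectra of symmetric matrices.
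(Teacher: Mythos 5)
Your proof is correct and follows essentially the same strategy as the paper: perturb a minimizer of $\mathcal{M}$ into $\mathcal{M}_0$ by an $\epsilon$-small symmetric modification supported on the edges, then use continuity of eigenvalues to conclude $n^+$ and $n^-$ do not decrease. The only cosmetic difference is that the paper perturbs only the vanishing edge entries to $\epsilon$, whereas you add $\epsilon$ to every edge entry and rule out the finite exceptional set; both yield the same conclusion.
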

\begin{proof}
   Since $\mathcal{M}_0\subset \mathcal{M}$, we have $$
    \min_{M\in \mathcal{M}_0}\min\{n-n^+(M),n-n^-(M)\}\geq \min_{M\in \mathcal{M}}\min\{n-n^+(M),n-n^-(M)\}.$$
It remains to prove the converse. Let $M'=(m'_{ij})_{1\leq i,j\leq n}\in\mathcal{M}$ be a matrix such that  $$
    \min\{n-n^+(M'),n-n^-(M')\}=\min_{M\in \mathcal{M}}\min\{n-n^+(M),n-n^-(M)\}.$$
For any $\epsilon>0$, we define a new matrix $M_{\epsilon}''=(m''_{ij})_{1\leq i,j\leq n}\in \mathcal{M}_0$ as follows
\begin{equation*}
   m''_{ij}=\left\{ \begin{aligned}
        &m'_{ij}\quad &&  i\sim j \text{ and } m'_{ij}\neq 0, \\
        &\epsilon \quad && i\sim j\text{ and }m'_{ij}= 0,\\
        &0  \quad &&i\nsim j.
    \end{aligned}
    \right.
\end{equation*}
By continuity of the eigenvalues, we have $$n^+(M''_{\epsilon})\geq n^+(M')\quad\text{ and }\quad n^-(M''_{\epsilon})\geq n^-(M'),$$
when $\epsilon$ is sufficiently small. Therefore, we estimate
\begin{equation*}
    \begin{aligned}
       \min_{M\in \mathcal{M}_0}\min\{n-n^+(M),n-n^-(M)\}&\leq \min\{n-n^+(M''_{\epsilon}),n-n^-(M''_{\epsilon})\}\\
       &\leq \min\{n-n^+(M'),n-n^-(M')\}\\    
       &=\min_{M\in \mathcal{M}}\min\{n-n^+(M),n-n^-(M)\}.  
    \end{aligned}
\end{equation*}
This completes the proof.
\end{proof}
The following proposition shows that Theorem \ref{thm:Inertia vertices} covers the Cvetkovi\'c bound in Theorem \ref{thm:Inertia bound}.
\begin{pro}\label{pro:cover}
    Let $G=(V,E)$ be a graph. For any edge weight $w$ and vertex measure $\mu$, we have 
 $$\min_\sigma\#\{k:L_k(\Gamma^{\sigma})=0\}\leq  \min_{M\in \mathcal{M}_0}\min\{n-n^+(M),n-n^-(M)\}, $$
 where $\mathcal{M}_0$ is defined as in Lemma \ref{lemma:M_0}.
\end{pro}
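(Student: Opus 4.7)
The plan is to reduce the problem to constructing, for each $M\in\mathcal{M}_0$, a signature $\sigma$ together with suitable auxiliary data so that $M$ (up to sign) appears as a normalized adjacency matrix, and then apply Corollary \ref{cor:anti-balanced}. Given any $M=(m_{ij})\in\mathcal{M}_0$, define the signature $\sigma_{ij}:=-\mathrm{sign}(m_{ij})$ for every edge $\{i,j\}\in E$; this is well-defined because $M\in\mathcal{M}_0$ forces $m_{ij}\neq 0$ precisely when $i\sim j$. Write $\Gamma^{\sigma}=(G,\sigma)$. By Proposition \ref{pro:weight and measure}, the count $\#\{k:L_k(\Gamma^{\sigma})=0\}$ is independent of the choice of edge weight and vertex measure, so I am free to compute it using any convenient such data.

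With this freedom in hand, the natural choice is to take edge weight $w'_{ij}:=|m_{ij}|$ for $\{i,j\}\in E$ and vertex measure $\mu\equiv 1$. A direct inspection of the definition of the adjacency matrix yields $A_{\Gamma^{\sigma}}=(\sigma_{ij}w'_{ij})_{i,j}=(-m_{ij})_{i,j}=-M$, and consequently $A^{\mu}_{-\Gamma^{\sigma}}=A_{-\Gamma^{\sigma}}=M$. Corollary \ref{cor:anti-balanced} then gives
\[
L_k(\Gamma^{\sigma})\ge \tfrac{1}{2}\lambda_k(M),\quad 1\le k\le n.
\]
Since $M$ has exactly $n^+(M)$ strictly positive eigenvalues, namely $\lambda_{n-n^+(M)+1}(M),\ldots,\lambda_n(M)$, at least $n^+(M)$ of the $L_k(\Gamma^{\sigma})$'s are strictly positive, so
\[
\#\{k:L_k(\Gamma^{\sigma})=0\}\le n-n^+(M).
\]

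Repeating the argument with the opposite signature $\sigma'_{ij}:=\mathrm{sign}(m_{ij})$ yields $A^{\mu}_{-\Gamma^{\sigma'}}=-M$, whose positive eigenvalue count is $n^-(M)$, and hence $\#\{k:L_k(\Gamma^{\sigma'})=0\}\le n-n^-(M)$. Combining the two bounds gives
\[
\min_{\sigma}\#\{k:L_k(\Gamma^{\sigma})=0\}\le \min\{n-n^+(M),\,n-n^-(M)\},
\]
and taking the minimum over $M\in\mathcal{M}_0$ on the right finishes the proof. The only subtle point is the appeal to Proposition \ref{pro:weight and measure}: it is what allows us to swap the originally prescribed $w$ and $\mu$ for the artificial data $w'_{ij}=|m_{ij}|$ and $\mu\equiv 1$ needed to realize $M$ itself as the normalized adjacency matrix. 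Everything else is a straightforward chain: sign-matching signature, identification of the adjacency matrix, and the inequality from Corollary \ref{cor:anti-balanced}.
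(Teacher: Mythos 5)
Your proposal is correct and follows essentially the same route as the paper: pick the signature $\sigma_{ij}=-\mathrm{sign}(m_{ij})$, use Proposition \ref{pro:weight and measure} to switch to the weight $|m_{ij}|$ and measure $\mu\equiv 1$ so that $A^{\mu}_{-\Gamma^{\sigma}}=M$, and invoke the bound $L_k\geq\frac{1}{2}\lambda_k(A^{\mu}_{-\Gamma^{\sigma}})$ to conclude that at most $n-n^+(M)$ of the $L_k$'s vanish. The only cosmetic difference is that the paper first replaces $M$ by $-M$ when $n^-(M)$ realizes the minimum, whereas you run the argument for both signatures $\pm\mathrm{sign}(m_{ij})$ and take the minimum afterwards; these are equivalent.
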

\begin{proof}
  Let $M'=(m'_{ij})_{1\leq i,j\leq n}\in\mathcal{M}_0$ be a matrix satisfying
\begin{equation*}
    \begin{aligned}
       n-n^+(M')&=\min\{n-n^+(M'),n-n^-(M')\}=\min_{M\in \mathcal{M}_0}\min\{n-n^+(M),n-n^-(M)\}.  
    \end{aligned}
\end{equation*}
If $n-n^-(M')=\min\{n-n^+(M'),n-n^-(M')\}$, we simply take $-M'$ instead of $M'$.
For simplicity, we  denote $n-n^+(M')$ by $m$. Define an edge weight $w$, a vertex measure $\mu$ and a signature $\sigma$ as follows:
\begin{equation*}
    \begin{aligned}
w_{ij}&:=\left|m'_{ij}\right|,\qquad &&\text{for any } \{i,j\}\in E; \\
\sigma_{ij}&:=-\text{sign}(m'_{ij}),\qquad &&\text{for any } \{i,j\}\in E; \\
     \mu_i&:=1,\qquad &&\text{for any } i\in V.
    \end{aligned}
\end{equation*}
By definition, we have $A_{-\Gamma}=M'$. Moreover, since the vertex measure  $\mu\equiv1$, we have $A^{\mu}_{-\Gamma}=A_{-\Gamma}$. By Theorem \ref{thm:L_n's antibalanced}, we get $$L_{m+1}(\Gamma)\geq \frac{1}{2} \lambda_{m+1}\left(A^{\mu}_{-\Gamma}\right)=\frac{1}{2} \lambda_{m+1}\left(A_{-\Gamma}\right)=\frac{1}{2}\lambda_{m+1}(M)>0.$$
Therefore, we arrive at
\begin{equation*}
    \begin{aligned}
      \min_\sigma\#\{k:L_k(\Gamma^{\sigma})=0\} &\leq  \#\{k:L_k(\Gamma)=0\}\\
      &\leq m =n-n^+(M')\\
      &=\min_{M\in \mathcal{M}_0}\min\{n-n^+(M),n-n^-(M)\}.
    \end{aligned}
\end{equation*}
Recalling Proposition \ref{pro:weight and measure}, this concludes the proof.
\end{proof}

\section{Tensor eigenvalue}\label{Tensor}
In this section, we establish a connection between graph $p$-Laplacian eigenvalues and  tensor eigenvalues.


We first recall the concepts of tensors and their eigenvalues. An $m$-th order $n$-dimensional tensor $\mathcal{A}$ consists of $n^m$  entries $A_{i_1,\ldots,i_m}\in \mathbb{R}$, where $i_l\in\{1,\ldots,n\}$ for $l=1,\ldots,m$. A tensor $\mathcal{A}$ is called symmetric if its entries are invariant under any permutation of their indices. 

Given an $m$-th order $n$-dimensional tensor $\mathcal{A}$, we define an operator $\mathcal{A}:\mathbb{R}^n\to\mathbb{R}^n $ as follows: 
$$\left(\mathcal{A}x^{m-1}\right)_i=\sum_{i_2,\ldots,i_m=1}^n\mathcal{A}_{i,i_2,\ldots,i_m}x_{i_2}\cdots x_{i_m},\qquad\,\text{for any}\,\, x\in \mathbb{R}^n \,\,\text{and}\,\,1\leq i\leq n,$$
where we use $\mathcal{A}x^{m-1}$ to denote $\mathcal{A}(x)$ and $\left(\mathcal{A}x^{m-1}\right)_i$ to denote the $i$-th component of $\mathcal{A}x^{m-1}$.
\begin{defn}[\cite{Lim15,Qi05}]
    Let $\mathcal{A}$ be an $m$-th order $n$-dimensional tensor. We say that a non-zero vector $x$ is an eigenvector of $\mathcal{A}$ if there exists a constant $\lambda\in \mathbb{R}$ such that \[\left(\mathcal{A}x^{m-1}\right)_i=\lambda x_i^{m-1}\qquad 1\leq i\leq n, \]
where $x_i$ is the $i$-th component of $x$. We call $\lambda$  an eigenvalue of the tensor $\mathcal{A}$.
\end{defn}
Let $\Gamma=(G,\sigma)$ be a signed graph with $G=(V,E)$. Recall that its graph $p$-Laplacian $\Delta_p^{\sigma}$ is defined as follows
$$\Delta_p^{\sigma}f(i)=\sum_{j\sim i}w_{ij}\Psi_{p}\left(f(i)-\sigma_{ij}f(j)\right)+\kappa_i\Psi_p\left(f(i)\right),\qquad \forall 1\leq i\leq n.$$

We have the following interesting observation. 
\begin{pro}\label{prop:tensor}
   Let $\Gamma=(G,\sigma)$ be a signed graph with $n$ vertices. For any positive even number $p$, there exists a $p$-th order $n$-dimensional symmetric tensor $\mathcal{A}^{(p)}$ such that
   \[\Delta_p^\sigma f=\mathcal{A}^{(p)}(f)=:\mathcal{A}^{(p)}f^{p-1},\,\,\text{for any}\,\,f\in C(V).\]
\end{pro}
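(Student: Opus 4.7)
The plan is to exhibit the tensor $\mathcal{A}^{(p)}$ explicitly as the symmetric tensor representing a certain homogeneous polynomial, then recover $\Delta_p^\sigma$ as its gradient.

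First I would observe that for $p$ even, $\Psi_p(t)=|t|^{p-2}t=t^{p-1}$ is a polynomial, so
\[
\Delta_p^\sigma f(i)=\sum_{j\sim i}w_{ij}\bigl(f(i)-\sigma_{ij}f(j)\bigr)^{p-1}+\kappa_i f(i)^{p-1}.
\]
Next, define the homogeneous polynomial of degree $p$ on $\mathbb{R}^n\cong C(V)$ by
\[
F(f):=\sum_{\{i,j\}\in E}w_{ij}\bigl(f(i)-\sigma_{ij}f(j)\bigr)^p+\sum_{i=1}^n \kappa_i f(i)^p.
\]
This sum is well-defined even though edges are undirected: because $p$ is even, $(f(i)-\sigma_{ij}f(j))^p=(\sigma_{ij}f(j)-f(i))^p\cdot (-1)^p=(f(j)-\sigma_{ij}f(i))^p$ after using $\sigma_{ij}^p=1$, so the summand is symmetric in the endpoints of each edge.

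The next step is a short chain-rule computation: differentiating $F$ in $f(i)$ picks up a factor $p$ from each edge $\{i,j\}$ incident to $i$ (the partner endpoint $j$ is an independent variable) and from the potential term, yielding
\[
\frac{1}{p}\frac{\partial F}{\partial f(i)}=\sum_{j\sim i}w_{ij}\bigl(f(i)-\sigma_{ij}f(j)\bigr)^{p-1}+\kappa_i f(i)^{p-1}=\Delta_p^\sigma f(i).
\]
Hence $\Delta_p^\sigma f=\tfrac{1}{p}\nabla F(f)$.

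Finally I would invoke the standard bijection between homogeneous polynomials of degree $p$ in $n$ variables and symmetric $p$-th order $n$-dimensional tensors: there exists a unique symmetric tensor $\mathcal{A}^{(p)}$ with
\[
F(f)=\sum_{i_1,\dots,i_p=1}^n \mathcal{A}^{(p)}_{i_1,\dots,i_p}f(i_1)\cdots f(i_p),
\]
obtained by expanding each $(f(i)-\sigma_{ij}f(j))^p$ by the binomial theorem and the monomial $\kappa_i f(i)^p$, then averaging the resulting coefficients over all $p!$ permutations of the index tuple. For any symmetric tensor, differentiation in $f(i)$ gives $\partial F/\partial f(i)=p\,(\mathcal{A}^{(p)}f^{p-1})_i$, so combining with the identity above yields $\Delta_p^\sigma f=\mathcal{A}^{(p)}f^{p-1}$, as desired.

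There is no real obstacle here; the only point to be careful about is the symmetry of $F$ with respect to swapping the two endpoints of an edge, which is exactly where the evenness of $p$ is used. Everything else is the routine identification of a degree-$p$ form with a symmetric $p$-tensor and the Euler-type gradient formula.
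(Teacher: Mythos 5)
Your proof is correct, but it takes a genuinely different route from the paper's. The paper constructs $\mathcal{A}^{(p)}$ by writing down its entries explicitly: for an index tuple consisting of $l$ copies of $i$ and $p-l$ copies of $j$ with $i\sim j$ the entry is $(-\sigma_{ij})^l w_{ij}$, the all-$i$ entry is $\kappa_i+\sum_{j\sim i}w_{ij}$, and all other entries vanish; it then verifies $\mathcal{A}^{(p)}f^{p-1}=\Delta_p^\sigma f$ by a direct binomial-theorem computation. You instead package everything into the degree-$p$ energy form $F(f)=\sum_{\{i,j\}\in E}w_{ij}(f(i)-\sigma_{ij}f(j))^p+\sum_i\kappa_i f(i)^p$, check that $\Delta_p^\sigma=\tfrac1p\nabla F$, and then invoke the standard bijection between homogeneous degree-$p$ forms and symmetric $p$-tensors together with the Euler-type identity $\partial F/\partial f(i)=p\,(\mathcal{A}^{(p)}f^{p-1})_i$. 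All the steps check out, including the two points where evenness of $p$ is genuinely needed (that $\Psi_p(t)=t^{p-1}$ is polynomial, and that the edge summand is invariant under swapping endpoints, via $(-\sigma_{ij})^p=1$). Your argument is conceptually cleaner and makes transparent \emph{why} a symmetric tensor must exist (the $p$-Laplacian is the gradient of a $p$-homogeneous energy), whereas the paper's explicit construction has the practical advantage that the concrete entries of $\mathcal{A}^{(p)}$ are recorded and reused later (in the normalized tensor of Eq.~\eqref{eq:tensorNormalized} feeding into Theorem~\ref{thm:tensor}); if you wanted your version to serve the rest of Section~\ref{Tensor}, you would still need to carry out the binomial expansion and symmetrization you sketch in order to extract those entries.
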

\begin{proof}
Note that any $f\in C(V)$ can be regarded as a vector in $\mathbb{R}^{n}$. When $p$ is an positive even number, we can define a $p$-th order $n$-dimensional symmetric tensor $\mathcal{A}^{(p)}$ such that $\Delta_p^{\sigma}f=\mathcal{A}^{(p)}f^{p-1}$ for any $f:V \to \mathbb{R}$ as follows
\begin{equation}\label{eq:tensor}
   \mathcal{A}^{(p)}_{i,i_2,\ldots,i_{p}}=\left\{ \begin{aligned}
        &0 && \qquad |\{i,i_2,\ldots,i_{p}\}|\geq 3,\\
        &0 && \qquad \{i,i_2,\ldots,i_{p}\}=\{i,j\}\,\,\text{and}\,\,i \nsim j,\\
        &(-\sigma_{ij})^lw_{ij} && \qquad \{i,i_2,\ldots,i_{p}\}=\{i^{(l)},j^{(p-l)}\}\,\,\text{and}\,\,i \sim j\,\,,\\
        &\kappa_i+\sum_{j\sim i}w_{ij}  &&  \qquad \{i,i_2,\ldots,i_{p}\}=\{i\}.
    \end{aligned}
    \right.
\end{equation}
where $|\{i,i_2,\ldots,i_{p}\}|$ is the cardinality of set $\{i,i_2,\ldots,i_{p}\}$ and $\{i,i_2,\ldots,i_{p-1}\}=\{i^{(l)},j^{(p-l)}\}$ means there are $l$ $i$'s and  $(p-l)$ $j$'s in  $\{i,i_2,\ldots,i_{p}\}$.
By a direct computation, for any $f:V\to \mathbb{R}$ and $i\in V$, we have 
\begin{equation*}
\begin{aligned}
    \mathcal{A}^{(p)}f^{p-1}(i)&=\sum_{i_2,\ldots,i_p=1}^n\mathcal{A}^{(p)}_{i,i_2,\ldots,i_p}f_{i_2}\cdots f_{i_p}\\
    &=\left(\kappa_i+\sum_{j\sim i}w_{ij}\right)\left(f(i)\right)^{p-1}+\sum_{j\sim i}\sum_{l=1}^{p-1}\tbinom{p-1}{l}(-\sigma_{ij})^lw_{ij}\left(f(i)\right)^{p-1-l}\left(f(j)\right)^{l}\\
    &=\sum_{j\sim i}w_{ij}\left(f(i)-\sigma_{ij}f(j) \right)^{p-1}+\kappa_i\left(f(i)\right)^{p-1}\\
    &=\Delta_{p}^{\sigma}f(i).
\end{aligned}
\end{equation*}
This equality shows that the operator $\Delta_p^{\sigma}$ coincides with the operator $\mathcal{A}^{(p)}$. 
\end{proof}

Due to the above proposition, the spectrum of graph $p$-Laplacian for $p$ even coincides with that of a tensor in the following sense: For any eigenpair $(\lambda, f)$ of $\Delta_p^\sigma$ with $p$ even, i.e., $\Delta_p^\sigma f(i)=\lambda \mu_i f(i)^{p-1}$, we have 
\[\mathcal{A}f^{p-1}=\lambda f^{p-1},\]
where 
\begin{equation}\label{eq:tensorNormalized}
\mathcal{A}_{i_1,\ldots,i_p}=\frac{1}{\mu_{i_1}}\mathcal{A}^{(p)}_{i_1,\ldots,i_p},\,\,\text{for any}\,\,i_1,\ldots, i_p,
\end{equation}
with $\mathcal{A}^{(p)}$ defined in (\ref{eq:tensor}). Notice that, the tensor $\mathcal{A}$ here is not necessarily symmetric. 

Combining the above observation with Theorem \ref{thm:anti-eigen}  and Theorem \ref{thm:lower bound}, we get the following theorem.

\begin{theorem}\label{thm:tensor}
Let $\Gamma=(G,\sigma)$ be a signed graph, $\mathcal{A}$ be the tensor defined as in Eq.~\eqref{eq:tensorNormalized} and $\eta^{(p)}$ be the largest eigenvalue of $\mathcal{A}$. Denote by $\mathcal{G}$ the set consisting of all subgraphs $\Gamma_0$ of $\Gamma$ and $|V_{\Gamma_0}|$  the number of vertices of $\Gamma_0$. Define $\mathcal{C}:=\max_{1\leq i\leq n}\left|\frac{\kappa_i}{\mu_i}\right|$. We have   $$\eta^{(p)}\ge2^{p-1}\max_{\Gamma_0\in \mathcal{G}}\lambda_{|V_{\Gamma_0}|}(A^{\mu}_{-\Gamma_0})-\mathcal{C}=2^{p-1}\max_{\substack{\Gamma_0\in \mathcal{G}\\|V_{\Gamma_0}|=n } }\lambda_{n}(A^{\mu}_{-\Gamma_0})-\mathcal{C}.$$ 
  Moreover, when $\sigma\equiv -1$, define function $f:V\to \mathbb{R}$ as follows:
        $$f(i):=\lim_{p\to \infty}\left|f^{(p)}(i)\right|^{\frac{p}{2}},\qquad\forall i\in V,$$ 
    where $f^{(p)}\in \mathcal{S}_p$ is an eigenfunction of $\mathcal{A}$ corresponding to $\eta^{(p)}$.
     Then $f$  is well defined and it is an eigenfunction of 
       $A^{\mu}_{-\Gamma}$ corresponding to $\lambda_n(A^{\mu}_{-\Gamma})$. 
\end{theorem}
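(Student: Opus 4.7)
The plan is to leverage the correspondence between the tensor spectrum and the $p$-Laplacian spectrum exposed by Proposition \ref{prop:tensor} and then reduce the theorem to Theorem \ref{thm:lower bound} and Theorem \ref{thm:anti-eigen}. Throughout, the hypothesis of Proposition \ref{prop:tensor} forces $p$ to be a positive even integer, in which case $\Psi_p(t)=t^{p-1}$, so the tensor eigenequation $\mathcal{A}x^{p-1}=\lambda x^{p-1}$ rewrites (after multiplying row $i$ by $\mu_i$) as $\Delta_p^{\sigma}x(i)=\lambda\mu_i\Psi_p(x(i))$. Hence the spectra of $\mathcal{A}$ and of $\Delta_p^{\sigma}$ coincide as sets of real numbers.

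My first step is to identify $\eta^{(p)}$ with $\lambda_n^{(p)}$. A direct pairing of $\Delta_p^{\sigma}f$ against $f$, combined with the identity $b\Psi_p(b)=|b|^p$ and the sign-covariance $\Psi_p(-\sigma t)=-\sigma\Psi_p(t)$ for $\sigma=\pm1$, shows that for any eigenpair $(\lambda,f)$ one has $\mathcal{R}_p^{\sigma}(f)=\lambda$; hence every eigenvalue is dominated by $\max_{f\in\mathcal{S}_p}\mathcal{R}_p^{\sigma}(f)$. Since $\mathcal{S}_p$ is homeomorphic to $S^{n-1}$, Proposition \ref{pro:index}(i) gives $\gamma(\mathcal{S}_p)=n$, and one checks (a proper closed symmetric subset of $\mathcal{S}_p$ avoids some antipodal pair $\{\pm x\}$, hence has index at most $n-1$) that $\mathcal{F}_n(\mathcal{S}_p)=\{\mathcal{S}_p\}$. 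Therefore $\lambda_n^{(p)}=\max_{f\in\mathcal{S}_p}\mathcal{R}_p^{\sigma}(f)$ is the largest eigenvalue of $\Delta_p^{\sigma}$. By the correspondence above, this coincides with $\eta^{(p)}$.

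My second step is the lower bound. Applying Theorem \ref{thm:lower bound} with $k=n$ yields immediately
\[
\eta^{(p)}=\lambda_n^{(p)}\ge 2^{p-1}\max_{\Gamma_0\in\mathcal{G}}\lambda_{|V_{\Gamma_0}|}(A^{\mu}_{-\Gamma_0})-\mathcal{C}=2^{p-1}\max_{\substack{\Gamma_0\in\mathcal{G}\\|V_{\Gamma_0}|=n}}\lambda_n(A^{\mu}_{-\Gamma_0})-\mathcal{C},
\]
the second equality being the one already contained in Theorem \ref{thm:lower bound}.

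For the eigenfunction claim with $\sigma\equiv-1$ (and implicitly $\Gamma$ connected, as is needed to invoke Theorem \ref{thm:anti-eigen}), the correspondence of Proposition \ref{prop:tensor} identifies $f^{(p)}\in\mathcal{S}_p$ as an eigenfunction of $\Delta_p^{\sigma}$ for $\lambda_n^{(p)}=\eta^{(p)}$. Theorem \ref{thm:anti-eigen} then directly produces the existence of $f(i)=\lim_{p\to\infty}|f^{(p)}(i)|^{p/2}$ for every $i\in V$ and shows that $f$ is an eigenfunction of $A^{\mu}_{-\Gamma}$ for $\lambda_n(A^{\mu}_{-\Gamma})$. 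The main subtle point of the whole argument is the exactness (in both directions) of the correspondence between tensor eigenpairs and $p$-Laplacian eigenpairs, together with the verification that $\lambda_n^{(p)}$ is the true maximum of the $p$-Laplacian spectrum; once these two points are in hand the theorem reduces to prior results and no new technical obstacle remains.
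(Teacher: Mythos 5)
Your proposal is correct and follows essentially the same route as the paper, which obtains this theorem by combining the tensor--$p$-Laplacian correspondence of Proposition \ref{prop:tensor} (in the normalized form \eqref{eq:tensorNormalized}) with Theorem \ref{thm:lower bound} at $k=n$ and with Theorem \ref{thm:anti-eigen}. The extra details you supply --- the pairing argument showing every eigenvalue equals the Rayleigh quotient of its eigenfunction, the identification $\lambda_n^{(p)}=\max_{\mathcal{S}_p}\mathcal{R}_p^{\sigma}$ via $\mathcal{F}_n(\mathcal{S}_p)=\{\mathcal{S}_p\}$, and the observation that connectedness is implicitly needed for the eigenfunction claim --- are all sound and merely make explicit what the paper leaves implicit.
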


\section{Examples}
\label{sec:exam}
In this section, we discuss some examples whose cut-off adjacency eigenvalues can be explicitly calculated. Moreover, we present examples of graphs which can be completely determined by their cut-off adjacency eigenvalues. 

We assume $w\equiv1$, $\mu\equiv1$ and $\kappa\equiv0$ throughout this section. Recall that we regard an unsigned graph $G=(V,E)$ as a signed graph with the signature $\sigma\equiv+1$. We define $\mathrm{deg}(i)=\sum_{j\sim i}1$.

\begin{example}\label{exam:bipartite}
   Let $G=(V,E)$ be a connected regular bipartite graph. We have $$L_n:=\lim_{p\to \infty}2^{-p}\lambda^{(p)}_n=\frac{d}{2},$$ where $d=\mathrm{deg}(i)$ for any $i\in V$. 
\end{example} 

This is a direct consequence of the following lemma from \cite{Amghibech06}. 
\begin{lemma}[{\cite[Lemma 2.1]{Amghibech06}}]
    Let $G$ be a connected bipartite graph. Then
    $$\lambda_{n}^{(p)}\geq 2^{p-1}\min\{\mathrm{deg}(i),i\in V\},$$
    with equality if and only if $G$ is a regular bipartite graph.
\end{lemma}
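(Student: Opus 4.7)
The strategy is to express $\lambda_n^{(p)}$ as the global maximum of the $p$-Laplacian Rayleigh quotient $\mathcal{R}_p^\sigma$ over $\mathcal{S}_p$ in the unsigned case $\sigma \equiv +1$, then sandwich this maximum between $2^{p-1}d_{\min}$ and $2^{p-1}d_{\max}$ using elementary convexity and a bipartition-adapted test function, and finally squeeze out the equality case.

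First I would establish the identity $\lambda_n^{(p)} = \max_{f \in \mathcal{S}_p} \mathcal{R}_p^\sigma(f)$. One direction is immediate: $\mathcal{S}_p \subset \mathbb{R}^n$ satisfies $\gamma(\mathcal{S}_p) = n$ by Proposition \ref{pro:index}(i), so $\mathcal{S}_p \in \mathcal{F}_n(\mathcal{S}_p)$ and the min-max definition of $\lambda_n^{(p)}$ already gives $\lambda_n^{(p)} \leq \max_{\mathcal{S}_p}\mathcal{R}_p^\sigma$. For the reverse inequality I would apply Proposition \ref{pro:index}(ii) with $\mathcal{C}$ ranging over all hyperplanes of $\mathbb{R}^n$: any $B \in \mathcal{F}_n(\mathcal{S}_p)$ must intersect the $1$-dimensional complement of every such $\mathcal{C}$, so by symmetry $B$ contains every antipodal pair of points in $\mathcal{S}_p$, forcing $B = \mathcal{S}_p$. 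Thus the infimum in the variational definition is attained only at $\mathcal{S}_p$, which gives the identity.

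For the lower bound, let $(V_1, V_2)$ denote the bipartition and take the test function $f_0 \in C(V)$ with $f_0 \equiv +1$ on $V_1$ and $f_0 \equiv -1$ on $V_2$. Every edge crosses the bipartition, so $|f_0(i)-f_0(j)|^p = 2^p$ on each edge, which gives $\mathcal{R}_p^\sigma(f_0) = 2^p|E|/n = 2^{p-1}\bar{d}$, where $\bar{d} = 2|E|/n$ is the average degree. Combined with the first step this produces $\lambda_n^{(p)} \geq 2^{p-1}\bar{d} \geq 2^{p-1}d_{\min}$, which is the claimed inequality.

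For the matching upper bound and the equality characterization I would use the convexity inequality $|a-b|^p \leq 2^{p-1}(|a|^p+|b|^p)$. Summing this over edges and grouping by vertex gives $\sum_{\{i,j\}\in E}|f(i)-f(j)|^p \leq 2^{p-1}\sum_i \deg(i)|f(i)|^p \leq 2^{p-1}d_{\max}\sum_i|f(i)|^p$, whence $\lambda_n^{(p)} \leq 2^{p-1}d_{\max}$ for any graph. When $G$ is regular of degree $d$ the two bounds coincide and force $\lambda_n^{(p)} = 2^{p-1}d$; conversely, if $\lambda_n^{(p)} = 2^{p-1}d_{\min}$, then $2^{p-1}\bar{d} = \mathcal{R}_p^\sigma(f_0) \leq 2^{p-1}d_{\min}$ combined with the obvious $\bar{d} \geq d_{\min}$ forces $\bar{d} = d_{\min}$, so $\deg(i) = d_{\min}$ for every $i \in V$, i.e., $G$ is regular. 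The main obstacle is the first step: one has to argue carefully that the largest variational eigenvalue genuinely coincides with the unconstrained maximum of $\mathcal{R}_p^\sigma$, which is exactly what Proposition \ref{pro:index}(ii) delivers; after that the proof is routine Rayleigh-quotient bookkeeping, with the bipartite hypothesis entering only because $f_0$ maximizes $|f_0(i)-f_0(j)|$ on every edge.
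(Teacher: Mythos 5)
Your proof is correct. Note first that the paper itself does not prove this statement: it is quoted verbatim from Amghibech \cite[Lemma 2.1]{Amghibech06} and used as a black box, so there is no in-paper argument to compare against. Your blind proof is a legitimate self-contained derivation. The key preliminary step --- that $\lambda_n^{(p)}$ equals the unconstrained maximum of $\mathcal{R}_p^\sigma$ over $\mathcal{S}_p$ --- is handled correctly: Proposition~\ref{pro:index}(i) gives $\gamma(\mathcal{S}_p)=n$ hence the ``$\leq$'' direction, and your use of Proposition~\ref{pro:index}(ii) with all hyperplanes $\mathcal{C}$ shows that any closed symmetric $B\subset\mathcal{S}_p$ with $\gamma(B)\geq n$ meets every line through the origin, hence contains every antipodal pair and equals $\mathcal{S}_p$; so $\mathcal{F}_n(\mathcal{S}_p)=\{\mathcal{S}_p\}$ and the identity follows. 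The rest is clean: the test function $f_0=\pm 1$ on the bipartition gives $\lambda_n^{(p)}\geq 2^{p-1}\bar{d}\geq 2^{p-1}d_{\min}$ (using $w\equiv\mu\equiv 1$, $\kappa\equiv 0$ as assumed in Section~\ref{sec:exam}); the convexity bound $|a-b|^p\leq 2^{p-1}(|a|^p+|b|^p)$ gives $\lambda_n^{(p)}\leq 2^{p-1}d_{\max}$, settling equality for regular graphs; and the converse via $\bar{d}\leq d_{\min}$ forcing all degrees equal is a nice economical way to get the equality characterization without analyzing when the convexity inequality is saturated edgewise.
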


One of the most important topics in spectral graph theory is to categorize graphs via their eigenvalues. In this spirit, we explore examples of graphs that can be completely determined by their cut-off adjacency eigenvalues $(L_1,L_2,\ldots,L_n)$. 
\begin{theorem}\label{thm:noedge}
    $(L_1,L_2,\ldots,L_n)=(0,0,\ldots,0)$ if and only if the signed graph $\Gamma=(G,\sigma)$ with $G=(V,E)$ has no edges.
\end{theorem}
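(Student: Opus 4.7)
The forward implication is immediate from the variational characterization in Theorem~\ref{thm:varitional}: when $E=\emptyset$, the cut-off Rayleigh-type quotient
$$\mathcal{R}^{\sigma}_{2,\infty}(g)=\sum_{\{i,j\}\in E}\max\{-w_{ij}g(i)\sigma_{ij}g(j),0\}$$
is an empty sum, hence identically zero on $\mathcal{S}_2$, so $L_k=0$ for every $1\le k\le n$.

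For the converse, I would prove the contrapositive: if $E\neq\emptyset$, then $L_n>0$, which already implies $(L_1,\ldots,L_n)\neq(0,\ldots,0)$. Fix any edge $\{i_0,j_0\}\in E$ and consider the subgraph $\Gamma_0=(G_0,\sigma)$ of $\Gamma$ whose vertex set is $V$ and whose only edge is $\{i_0,j_0\}$, with inherited signature. The matrix $A_{-\Gamma_0}$ has only two nonzero entries, both equal to $-\sigma_{i_0j_0}w_{i_0j_0}$, placed at positions $(i_0,j_0)$ and $(j_0,i_0)$. Since $A^{\mu}_{-\Gamma_0}$ is similar to the symmetric matrix $D^{-1/2}A_{-\Gamma_0}D^{-1/2}$, a direct calculation on its $2\times 2$ nonzero block gives
$$\lambda_n(A^{\mu}_{-\Gamma_0})=\frac{w_{i_0j_0}}{\sqrt{\mu_{i_0}\mu_{j_0}}}>0.$$
Theorem~\ref{thm:L_n's antibalanced} then yields
$$L_n(\Gamma)\ge \frac{1}{2}\lambda_n(A^{\mu}_{-\Gamma_0})=\frac{w_{i_0j_0}}{2\sqrt{\mu_{i_0}\mu_{j_0}}}>0,$$
contradicting $L_n=0$.

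There is essentially no obstacle: the statement reduces to a one-line eigenvalue computation on a single-edge subgraph plus one application of Theorem~\ref{thm:L_n's antibalanced}. A self-contained alternative that bypasses Theorem~\ref{thm:L_n's antibalanced} is to observe via Proposition~\ref{pro:index}(ii) that any $B\in\mathcal{F}_n(\mathcal{S}_2)$ meets every one-dimensional subspace and, being symmetric, must coincide with $\mathcal{S}_2$; then plug the explicit test function $g_0\in\mathcal{S}_2$ supported on $\{i_0,j_0\}$ with $g_0(i_0)=(2\mu_{i_0})^{-1/2}$ and $g_0(j_0)=-\sigma_{i_0j_0}(2\mu_{j_0})^{-1/2}$ into Theorem~\ref{thm:varitional} with $q=2$ to obtain $L_n\ge \mathcal{R}^{\sigma}_{2,\infty}(g_0)=w_{i_0j_0}/(2\sqrt{\mu_{i_0}\mu_{j_0}})>0$.
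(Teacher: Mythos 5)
Your proof is correct, and the necessity direction takes a genuinely different route from the paper's. The paper argues by contradiction via Lemma~\ref{lemma:edge cover}: it passes to the induced subgraph on the two endpoints of an edge, notes that its minimum edge cover has cardinality $1$, and applies that lemma with $k=n-2$ removed vertices to conclude $L_n\geq \tfrac{1}{2}w_{\min}/\mu_{\max}>0$. You instead keep all $n$ vertices, retain a single edge $\{i_0,j_0\}$, compute $\lambda_n(A^{\mu}_{-\Gamma_0})=w_{i_0j_0}/\sqrt{\mu_{i_0}\mu_{j_0}}$ for this spanning single-edge subgraph, and invoke Theorem~\ref{thm:L_n's antibalanced}; this is legitimate since the paper's notion of subgraph allows $V'=V$ with $E'\subset E$ arbitrary, and the first inequality of that theorem needs no antibalancedness. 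Your alternative via Proposition~\ref{pro:index}(ii) (showing any $B\in\mathcal{F}_n(\mathcal{S}_2)$ must equal $\mathcal{S}_2$, then evaluating $\mathcal{R}^{\sigma}_{2,\infty}$ at the explicit two-point test function) is the most elementary of the three and bypasses both Section~5 and Section~6 machinery entirely. The sufficiency direction is the same trivial observation in both (empty sum of nonnegative terms), though the paper phrases it at the level of $\mathcal{R}^{\sigma}_p$ rather than $\mathcal{R}^{\sigma}_{2,\infty}$. One cosmetic remark: you announce a contrapositive argument but close with ``contradicting $L_n=0$''; either framing works, but pick one.
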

\begin{proof}
    Sufficiency: If  $\Gamma$ has no edges, we have $\mathcal{R}^{\sigma}_p(g)=0$ for any function $g:V\to \mathbb{R}$ and any $p$.  By definition, we derive
    \begin{equation*}
    \lambda_{k}^{(p)}=\min_{B\in \mathcal{F}_{k}(\mathcal{S}_p)}\max_{g\in B}\mathcal{R}^{\sigma}_p(g)=0,
\end{equation*}
and, hence $L_k=\lim_{p\to\infty}2^{-p}\lambda_k^{(p)}=0$,
 for any $1\leq k\leq n$. 

 Necessity: We argue by contradiction. Suppose there is an edge $e=\{1,2\}\in E$. We  remove all vertices but $\{1,2\}$ to get an induced subgraph $\Gamma'=(G',\sigma)$ with $G'=(V',E')$, where $V'=\{1,2\}$ and $E'=\left\{\{1,2\}\right\}$. By definition, the cardinality of the minimum  edge cover of graph $G'$ is $1$. Using Lemma \ref{lemma:edge cover}, we get $L_{n}>0$. Contradiction.
\end{proof}
Let us recall the following result due to Amghibech \cite[Theorem 6]{Amghibech}. 
\begin{theorem}[{\cite[Theorem 6]{Amghibech}}]\label{thm:complete}
   Let $G=(V,E)$ be a complete graph. Assume $n\geq 3$. Let $q$ be  the H\"older conjugate to $p$, i.e., $\frac{1}{p}+\frac{1}{q}=1$. The positive eigenvalues of $p$-Laplacian of $G$ are $$ n-(h+k)+(h^{q-1}+k^{q-1})^{p-1},$$ where $h,k$ are positive integers with $h+k\leq n$. 
\end{theorem}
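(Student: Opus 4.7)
My plan is to prove the theorem in two stages: first construct, for each admissible $(h,k)$, an explicit eigenfunction of $\Delta_p$ on $K_n$ realizing the stated eigenvalue, and then show that every positive eigenvalue arises this way.

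For the existence direction I would take the three-level ansatz $f\equiv a>0$ on a chosen set of $h$ vertices, $f\equiv b<0$ on another set of $k$ vertices, and $f\equiv 0$ on the remaining $n-h-k$ vertices. Since $G$ is complete, the eigenvalue equation $\sum_{j\neq i}\Psi_p(f(i)-f(j))=\lambda\Psi_p(f(i))$ reduces to exactly one scalar condition per level set. The condition at a zero-valued vertex forces $h a^{p-1}=k|b|^{p-1}$, equivalently $|b|/a=(h/k)^{q-1}$; substituting this into the condition at an $a$-valued vertex yields
\[\lambda=(n-h-k)+k\bigl(1+|b|/a\bigr)^{p-1}=(n-h-k)+\bigl(h^{q-1}+k^{q-1}\bigr)^{p-1},\]
and a symmetric calculation confirms the same value at a $b$-valued vertex. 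The boundary case $h+k=n$ corresponds to a two-level ansatz and is handled analogously by dropping the zero-vertex equation.

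For the converse, let $f$ be an eigenfunction with positive eigenvalue $\lambda$ taking distinct values $v_1<\cdots<v_s$ with multiplicities $m_1,\ldots,m_s$. The eigenvalue equation at any vertex with value $v_i$ is equivalent to $F(v_i)=0$ for the single-variable function $F(x):=\sum_l m_l\Psi_p(x-v_l)-\lambda\Psi_p(x)$. I would first observe that $f$ must take both positive and negative values: if all $v_l$ were nonnegative (and not all zero), the equation at the vertex with smallest value $v_1$ would have a strictly negative left side and a nonnegative right side, a contradiction; the symmetric argument handles nonpositive values. It then suffices to show $F$ has at most one positive zero and at most one negative zero, so that $s\le 3$ (with $s=3$ only when $0$ is among the $v_i$'s), after which the resulting data $(m_i,v_i)$ match a unique admissible $(h,k)$.

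The main obstacle is exactly this bound on the zeros of $F$ on each half-line. On $x>0$ one can normalize by writing $F(x)/x^{p-1}=\sum_l m_l\Psi_p(1-v_l/x)-\lambda$ and substituting $t=1/x$, so that positive zeros of $F$ correspond to positive zeros of $T(t):=\sum_l m_l\Psi_p(1-v_l t)-\lambda$. I would then exploit that the terms with $v_l>0$ are monotonically decreasing in $t$ while those with $v_l<0$ are monotonically increasing, and combine this with the convexity or concavity of $\Psi_p$ (depending on whether $p\ge 2$ or $1<p<2$) to force $T$ to have at most one positive zero. An analogous analysis on $x<0$ closes the classification, after which matching the two surviving nonzero levels and their multiplicities against the ansatz identifies the corresponding $(h,k)$.
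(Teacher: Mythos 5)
The paper does not prove this statement at all --- Theorem~\ref{thm:complete} is imported verbatim from \cite[Theorem 6]{Amghibech} --- so there is no internal proof to compare against; I am judging your argument on its own. Your existence half is correct and complete: the three-level ansatz, the balance condition $ha^{p-1}=k|b|^{p-1}$ at a zero vertex, the identity $(q-1)(p-1)=1$, and the check that the $a$-level and $b$-level equations give the same $\lambda=(n-h-k)+(h^{q-1}+k^{q-1})^{p-1}$ all work, as does the two-level case $h+k=n$. The observation that a nonconstant eigenfunction with $\lambda>0$ must take values of both signs is also fine.

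The gap is exactly where you place it, and your proposed repair does not close it. You need to show that the common zeros $v_1,\dots,v_s$ of $F$ include at most one positive and at most one negative number, and you propose to do this by showing $T(t)=\sum_l m_l\Psi_p(1-v_lt)-\lambda$ has at most one positive zero using term-by-term monotonicity plus convexity of $\Psi_p$. But $T$ is a sum of increasing terms (those with $v_l<0$) and decreasing terms (those with $v_l>0$), so no monotonicity of $T$ follows; and the second derivative of the term $\Psi_p(1-v_lt)$ equals $v_l^2(p-1)(p-2)|1-v_lt|^{p-4}(1-v_lt)$, whose sign is that of $(p-2)(1-v_lt)$ and therefore changes across the very interval $[1/u,1/v]$ between two putative positive values $u>v>0$ (since $1-ut$ and $1-vt$ have opposite signs there). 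So $T$ is neither monotone nor convex/concave on the relevant range, and indeed for generic choices of the data $(v_l,m_l,\lambda)$ the equation $T(t)=0$ can have several positive roots (e.g.\ already for $p=3$ with one positive and one negative level of unequal weights, $T'$ changes sign twice). Any correct argument must exploit the self-referential structure --- the $v_l$ are simultaneously the parameters of $F$ and the points where $F$ vanishes --- which your sketch never uses. Until that lemma is actually proved, the converse inclusion (every positive eigenvalue is of the stated form) is unestablished, and that is the substantive content of the theorem.
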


\begin{theorem}\label{thm:L-compete}
Let $G=(V,E)$ be a graph. Then $L_2>0$ if and only if  $G$ is complete. Moreover, when $G$ is complete, we have $L_2=\frac{1}{2}$ and \begin{equation*}
 L_n=\left\{ \begin{aligned}
        &\frac{n}{4}& \qquad n\text{ is even},\\
 &\frac{\sqrt{n^2-1}}{4}  & \qquad n\text{ is odd}.
    \end{aligned}
    \right.
\end{equation*} 
\end{theorem}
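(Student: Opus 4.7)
The plan is to prove three separate statements: (a) $L_2 = 0$ whenever $G$ is not complete; (b) $L_2 = \tfrac{1}{2}$ whenever $G$ is complete; and (c) the closed form for $L_n$ when $G$ is complete. Throughout I take $q = 2$ in Theorem \ref{thm:varitional}, which with $\sigma \equiv +1$, $\mu \equiv 1$, $w \equiv 1$ reads
\[
L_k \;=\; \min_{B \in \mathcal{F}_k(\mathcal{S}_2)}\ \max_{g \in B}\ \sum_{\{i,j\} \in E} \max\{-g(i)g(j),\,0\}.
\]

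Claim (a) is a one-line test-set argument. When $i_0 \not\sim j_0$, the set $\{g \in \mathcal{S}_2 : \supp(g) \subseteq \{i_0, j_0\}\}$ is a unit circle inside a two-dimensional subspace, hence has genus $2$ by Proposition \ref{pro:index}(i), and the functional vanishes identically on it because $\{i_0, j_0\}$ is a non-edge; this forces $L_2 = 0$ (combined with $L_2 \ge 0$). For (b), the matching upper bound $L_2 \le \tfrac{1}{2}$ comes from the \emph{same} circle construction applied to any edge $\{i_0, j_0\}$: on that circle $\mathcal{R}^\sigma_{2,\infty}(g) = \max\{-g(i_0)g(j_0), 0\} \le \tfrac{1}{2}$. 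The lower bound $L_2 \ge \tfrac{1}{2}$ is the crux. Given any $B \in \mathcal{F}_2(\mathcal{S}_2)$, Proposition \ref{pro:index}(ii) with the one-dimensional subspace $\mathcal{C} = \mathrm{span}\{(1,\ldots,1)\}$ produces $g \in B$ with $\sum_i g(i) = 0$; setting $t := \sum_{g(i)>0} g(i) = -\sum_{g(j)<0} g(j)$, the completeness of $G$ makes every positive--negative pair an edge, so $\mathcal{R}^\sigma_{2,\infty}(g) = t^2$. It therefore remains to prove the scalar inequality $t^2 \ge \tfrac{1}{2}$ on $\{g : \sum g = 0,\ \sum g^2 = 1\}$, which I plan to handle by splitting on $M := \max_i |g(i)|$: if $M > 1/\sqrt{2}$, the extremal coordinate alone forces $t \ge M > 1/\sqrt{2}$; if $M \le 1/\sqrt{2}$, then $1 = \sum g(i)^2 \le M \sum |g(i)| = 2Mt$ gives $t \ge 1/(2M) \ge 1/\sqrt{2}$.

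For (c) I optimize $L_n = \max_{g \in \mathcal{S}_2} \sum_{i<j} \max\{-g(i)g(j), 0\}$ directly. Write $A_+ = \{g>0\}$, $A_- = \{g<0\}$ with cardinalities $a, b$, and set $s = \sum_{A_+} g(i)^2$. Completeness reduces the functional to $S_+ S_-$, where $S_+ = \sum_{A_+} g$ and $S_- = -\sum_{A_-} g$; two applications of Cauchy--Schwarz yield $S_+^2 \le a s$ and $S_-^2 \le b(1-s)$, hence $S_+ S_- \le \sqrt{ab}\sqrt{s(1-s)} \le \sqrt{ab}/2$. The discrete optimum of $\sqrt{ab}/2$ under $a, b \ge 1,\ a + b \le n$ is $n/4$ for even $n$ (attained at $a = b = n/2$) and $\sqrt{n^2-1}/4$ for odd $n$ (attained at $\{a,b\} = \{(n-1)/2,(n+1)/2\}$). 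Attainability of the bound is verified by the explicit $g$ equal to $1/\sqrt{2a}$ on $A_+$ and $-1/\sqrt{2b}$ on $A_-$ with the corresponding balanced partition. The main obstacle I anticipate is the scalar lower bound $t^2 \ge 1/2$ in (b); every other step is either a genus-theoretic min-max or a short elementary optimization.
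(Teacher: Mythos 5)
Your proposal is correct, and for the quantitative part it takes a genuinely different route from the paper. The paper handles sufficiency by importing Amghibech's explicit list of positive $p$-Laplacian eigenvalues of the complete graph (its Theorem~\ref{thm:complete}), identifying $\lambda_2^{(p)}$ and $\lambda_n^{(p)}$ as the min and max of that list (which additionally requires the simplicity of $\lambda_1^{(p)}$ and the absence of eigenvalues between $\lambda_1^{(p)}$ and $\lambda_2^{(p)}$ from \cite{DPT21}), and then passes to the limit $2^{-p}(h^{q-1}+k^{q-1})^{p-1}\to\sqrt{hk}/2$; it also has to treat $|V|=2$ separately because Amghibech's theorem needs $n\ge 3$. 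You instead work entirely inside the variational characterization of Theorem~\ref{thm:varitional} at $q=2$: the test-circle supported on a single edge/non-edge gives the upper bound $L_2\le\tfrac12$ (and $L_2=0$ in the non-complete case), Proposition~\ref{pro:index}(ii) against the constants produces a mean-zero $g$ in any genus-$2$ set, and your two-case estimate on $M=\max_i|g(i)|$ correctly yields $t^2\ge\tfrac12$ (I checked both branches; equality at $(1/\sqrt2,-1/\sqrt2,0,\dots,0)$ matches the upper bound). The Cauchy--Schwarz optimization for $L_n$ and the extremizer $g=1/\sqrt{2a}$ on $A_+$, $-1/\sqrt{2b}$ on $A_-$ with $\{a,b\}=\{\lfloor n/2\rfloor,\lceil n/2\rceil\}$ are also correct and reproduce $n/4$ resp.\ $\sqrt{n^2-1}/4$. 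What your route buys is self-containedness and uniformity in $n$: it needs neither Amghibech's spectrum nor the nodal-domain input, only the min-max machinery the paper has already set up. For the necessity direction your direct test-set argument is essentially a re-proof of the relevant special case of Theorem~\ref{thm:Inertia vertices}, which is what the paper invokes.

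One small point you should make explicit in part (c): the identity $L_n=\max_{g\in\mathcal{S}_2}\mathcal{R}^{\sigma}_{2,\infty}(g)$ is asserted rather than proved. The inequality $\le$ is immediate from taking $B=\mathcal{S}_2$, but $\ge$ needs the observation that any closed symmetric $B\subset\mathcal{S}_2$ with $\gamma(B)\ge n$ must be all of $\mathcal{S}_2$: if $B$ missed an antipodal pair $\pm x_0$, the odd map $x\mapsto x-\langle x,x_0\rangle_{\mu} x_0/\langle x_0,x_0\rangle_{\mu}$ would be continuous and nonvanishing on $B$ with values in an $(n-1)$-dimensional space, forcing $\gamma(B)\le n-1$. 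The paper itself uses this identity without comment (e.g.\ in Proposition~\ref{pro:union L_n}), so this is a one-line fix rather than a gap in substance.
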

\begin{proof}

We first assume $|V|=2$.

Sufficiency: Let $G$ be a complete graph. Then $G$ is a regular bipartite graph. By Example \ref{exam:bipartite}, we have $L_1=0$ and $L_2=\frac{1}{2}$.

Necessity: If $G$ is not a complete graph, then there is no edges in $G$. We use Theorem \ref{thm:noedge} to get $L_2=0$. Contradiction.

Next, we  assume $|V|\geq3$.

     Sufficiency: Assume graph $G$ is a complete graph. By \cite[Theorem 4.1]{DPT21}, we have $\lambda_1^{(p)}=0$ is simple. Since there is no eigenvalue between $\lambda_1^{(p)}$ and $\lambda_2^{(p)}$, we get by Theorem \ref{thm:complete} $$\lambda_2^{(p)}=\min_{\substack{h+k\leq n\\  h,k\in \{ 1,2,\ldots,n-1\} }      }\big( n-(h+k)+(h^{q-1}+k^{q-1})^{p-1} \big).$$
Next, we compute 
$$L_2=\lim_{p\to \infty}2^{-p}\lambda_2^{(p)}=\min_{\substack{h+k\leq n\\  h,k\in \{ 1,2,\ldots,n-1\}   }      }\lim_{p\to\infty}2^{-p}(h^{q-1}+k^{q-1})^{p-1}=\frac{1}{2}. $$
Similarly, we use the fact that $\lambda_n^{(p)}$ is the largest eigenvalue of the $p$-Laplacian. By Theorem \ref{thm:complete}, we get $$\lambda_n^{(p)}=\max_{\substack{h+k\leq n\\  h,k\in \{ 1,2,\ldots,n-1\} }      }\big( n-(h+k)+(h^{q-1}+k^{q-1})^{p-1} \big).$$ Then by computation, we have $$L_n=\lim_{p\to \infty}2^{-p}\lambda_n^{(p)}=\max_{\substack{h+k\leq n\\  h,k\in \{ 1,2,\ldots,n-1\} }      }\lim_{p\to\infty}2^{-p} (h^{q-1}+k^{q-1})^{p-1}=\max_{\substack{h+k\leq n\\  h,k\in \{ 1,2,\ldots,n-1\}    }      }\frac{\sqrt{hk}}{2}. $$
So when $n$ is even, we have $L_n=\frac{n}{4}$. When $n$ is odd, we have $L_n=\frac{\sqrt{n^2-1}}{4}$. 

Necessity: Since $L_2>0$, we have  by Theorem \ref{thm:Inertia vertices} that the cardinality of a maximum independent set of $G$ is $1$. So $G$ must be a complete graph.
\end{proof}
To continue, we need some preparatory work. First, we give a characterization of a graph on $n$ vertices with the cardinality of a minimum edge cover is $n-1$. 
\begin{pro}\label{pro:star}
    If the cardinality of a minimum  edge cover of a graph $G=(V,E)$ is $n-1$, then $G$ is the complete graph $K_3$ with $3$ vertices or a star graph.
\end{pro}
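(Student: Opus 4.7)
The plan is to use a structural characterisation of minimum edge covers combined with a small exchange argument.

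First I would establish the following structural lemma: if $C$ is any minimum edge cover of a graph without isolated vertices, then each connected component of the spanning subgraph $(V,C)$ is a star. The key observation is minimality: for every $e=\{u,v\}\in C$, at least one of $u,v$ has no other incident edge in $C$ (otherwise $C\setminus\{e\}$ would still cover all vertices, contradicting minimality). This ``private endpoint'' property has two consequences. First, $C$ is acyclic, since every vertex on a cycle in $C$ has degree at least two in $C$, so both endpoints of any cycle edge would fail to be private. Second, no component of $C$ can contain a path $a$--$b$--$c$--$d$ of length three, because the middle edge $\{b,c\}$ would have both $b$ (also covered by $\{a,b\}$) and $c$ (also covered by $\{c,d\}$) non-private. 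Hence each component of $C$ is a tree of diameter at most two, i.e., a star.

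Granted the lemma, let $C$ be a minimum edge cover with $|C|=n-1$, and suppose its components are stars on $m_1,\dots,m_k$ vertices with each $m_i\ge 2$ (which uses that $G$ has no isolated vertices). A star on $m_i$ vertices has $m_i-1$ edges, so $|C|=\sum_{i=1}^k(m_i-1)=n-k$. The hypothesis $|C|=n-1$ therefore forces $k=1$, and $C$ is a single spanning star with centre $v$ and leaves $u_1,\dots,u_{n-1}$; in particular every $\{v,u_i\}$ is an edge of $G$.

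Finally I would rule out extra edges between two leaves whenever $n\ge 4$: if $\{u_i,u_j\}\in E$ were such an edge, then
\[
\{\{u_i,u_j\}\}\cup\{\{v,u_k\}:k\notin\{i,j\}\}
\]
would be an edge cover of size $n-2$, contradicting the minimality of $|C|$. Hence for $n\ge 4$ the graph $G$ is the star $K_{1,n-1}$. The small cases are immediate: $n=2$ forces $G=K_2=K_{1,1}$, a star, and $n=3$ allows exactly $K_{1,2}$ or the additional edge $\{u_1,u_2\}$ which yields $K_3$. The only real subtlety is the star-decomposition lemma in the first step, but once the private-endpoint observation is in hand its proof is a very short case analysis.
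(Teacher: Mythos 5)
Your proof is correct, but it takes a genuinely different route from the paper's. The paper first invokes Gallai's identity (minimum edge cover size plus maximum matching size equals $n$) to conclude that a graph with minimum edge cover of size $n-1$ has maximum matching of size $1$; it then observes that every edge must meet one of the two matched vertices and finishes with a short case analysis on the degrees of those two vertices. You instead work directly with the structure of a minimum edge cover $C$: the private-endpoint observation (minimality forces each edge of $C$ to have an endpoint of degree one in $C$) yields that the components of $(V,C)$ are vertex-disjoint stars, so $|C|=n-k$ where $k$ is the number of components, and $|C|=n-1$ forces a single spanning star; an exchange argument then excludes leaf--leaf edges for $n\geq 4$. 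Your argument is self-contained --- the star-decomposition lemma is essentially a from-scratch proof of the relevant direction of Gallai's theorem --- and it gives slightly more, namely the general identity ``minimum edge cover $=n-{}$(number of star components)'' which immediately handles all values of the cover number, not just $n-1$. The paper's proof is shorter given Gallai's theorem as a black box. All the individual steps you sketch (acyclicity, exclusion of paths on four vertices, the count $\sum_i(m_i-1)=n-k$, and the size-$(n-2)$ replacement cover for $n\geq 4$) check out.
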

This is well known to experts. For the readers' convenience, we provide a proof below.
\begin{proof}
Since $G$ has edge covers, it has no isolated vertices. 

When $n=2$, if the cardinality of a minimum edge cover is $1$, then $G$ must be $K_{2}$.

 When $n=3$, if the cardinality of a minimum edge cover  is $2$, then $G$ must be either $K_3$ or $K_{2,1}$ (i.e., the star graph with $3$ vertices). 

 We next consider the case $n\geq 4$. By Gallai' theorem (see, e.g., \cite[Theorem 3.1.22]{West}), the cardinality of a maximum matching of $G$ is $1$. Without loss of generality, we assume that $e=\{1,2\}$ is a maximum matching of $G$. 
 By definition, for any edge in $E$, one of its endpoints must be the vertex $1$ or  the vertex 
 $2$. Otherwise, there is at least one edge $e_0\in E$ which is independent to $e$. Then $\{e,e_0\}$ is a matching of size $2$. Contradiction. 
Since $G$ has no isolated vertices, any vertex $i$ different from $1$ and $2$ must be adjacent to $1$ or $2$.
 We consider the following two cases.

 Case $1$: $\mathrm{deg}(1)=1$ or $\mathrm{deg}(2)=1$. If $\mathrm{deg}(1)=1$, then the vertex $2$ is incident to every edge in $E$. So $G$ is star graph. The same proof is effective if $\mathrm{deg}(2)=1$.

  Case $2$: $\mathrm{deg}(1)\geq 2$ and $\mathrm{deg}(2)\geq 2$. Without loss of generality, we can assume $3\sim 1$. If there exists $i\in V$ where $i\neq 1,2,3$ such that $i\sim 2$. Then $\left\{ \{1,3\},\{2,i\} \right\}$ is a matching of size $2$. Contradiction. If the neighbourhood of $2$ is $\{1,3\}$, since $n\geq 4$, we  take a vertices $n\in V$ such that $n\sim 1$. Then $\left\{ \{1,n\},\{2,3\} \right\}$ is a matching of size $2$. Contradiction.
  
So we conclude that $G$ must be a star graph when $n\geq 4$. 
\end{proof}
When the graph $G=(V,E)$ is disconnected, we denote its connected components by $\{G_i\}_{i=1}^l$. While the set of $p$-Laplacian eigenvalues of $G$ is the union of $p$-Laplacian eigenvalues of each $G_i$, it is not known whether the set of $p$-Laplacian \emph{variational} eigenvalues of $G$ is the union of $p$-Laplacian \emph{variational} eigenvalues of each $G_i$ or not. The latter is only known to be true for forests \cite{DPT21}. In general, we have the following result.

\begin{pro}\label{pro:union}
 Let $G=(V,E)$ be a disconnected graph with $l$ connected components which are denoted by $\{G_i\}_{i=1}^l$ with $G_i=(V_i,E_i)$. Let $\Delta_p(G)$ and $\Delta_p(G_i)$ be the $p$-Laplacians of $G$ and $G_i$, respectively. For any $p>1$, we have $$\lambda_1^{(p)}(G)=\lambda_2^{(p)}(G)=\cdots\lambda_l^{(p)}(G)=0$$ and $$\lambda_{l+1}^{(p)}(G)=\min_{1\leq i\leq l}\lambda_{2}^{(p)}(G_i),$$
 where $\lambda_k^{(p)}(G)$ and $\lambda_k^{(p)}(G_i)$ are the $k$-th variational eigenvalues of $\Delta_p(G)$ and  $\Delta_p(G_i)$, respectively.
\end{pro}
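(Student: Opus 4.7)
The plan is to establish the two statements in turn, exploiting that in Section~\ref{sec:exam} we have $\sigma\equiv+1$, $w\equiv 1$, $\mu\equiv 1$ and $\kappa\equiv 0$, so only the genus machinery from earlier sections is needed.

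For $\lambda_1^{(p)}(G)=\cdots=\lambda_l^{(p)}(G)=0$, I would exhibit the $l$-dimensional subspace $W=\mathrm{span}\{\mathbf{1}_{V_1},\ldots,\mathbf{1}_{V_l}\}\subset C(V)$ of functions constant on every component. Every edge of $G$ lies inside a single component, so $\mathcal{R}_p^{\sigma}\equiv 0$ on $W$; by Proposition~\ref{pro:index}(i), $W\cap\mathcal{S}_p(V)$ is closed symmetric of genus $l$. The min--max definition yields $\lambda_l^{(p)}(G)\le 0$, and nonnegativity of $\mathcal{R}_p^{\sigma}$ together with the monotonicity of the $\lambda_k^{(p)}$ in $k$ forces all of $\lambda_1^{(p)},\ldots,\lambda_l^{(p)}$ to vanish.

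For the upper bound $\lambda_{l+1}^{(p)}(G)\le \min_i\lambda_2^{(p)}(G_i)$, let $i_0$ realize the minimum and fix (via Lemma~\ref{app:2}) a minimizing set $B_0\in\mathcal{F}_2(\mathcal{S}_p(V_{i_0}))$ for $\lambda_2^{(p)}(G_{i_0})$. Let $S_{-i_0}$ denote the unit $\ell^p$-sphere of the $(l-1)$-dimensional subspace of functions supported on $V\setminus V_{i_0}$ and constant on each component there, so $\gamma(S_{-i_0})=l-1$ by Proposition~\ref{pro:index}(i). Define the $\ell^p$-join
\begin{equation*}
P:=\left\{\alpha g+\beta h\in\mathcal{S}_p(V):g\in B_0\text{ (extended by }0),\ h\in S_{-i_0},\ |\alpha|^p+|\beta|^p=1\right\}.
\end{equation*}
The set $P$ is closed and symmetric, and because $B_0$ and $S_{-i_0}$ are supported on disjoint vertex sets, the standard subadditivity of Krasnoselskii genus under such joins gives $\gamma(P)\ge\gamma(B_0)+\gamma(S_{-i_0})\ge l+1$. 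For $F=\alpha g+\beta h\in P$, the only edges contributing to the numerator of $\mathcal{R}_p^{\sigma}(F)$ are those inside $V_{i_0}$ (since $h$ is constant on every other component), and a direct calculation gives $\mathcal{R}_p^{\sigma}(F)=|\alpha|^p\mathcal{R}_p^{G_{i_0}}(g)\le\lambda_2^{(p)}(G_{i_0})$. Plugging $P$ into the min--max completes this half.

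For the matching lower bound, first observe $\lambda_{l+1}^{(p)}(G)>0$: a minimizing set on which $\mathcal{R}_p^{\sigma}\equiv 0$ would sit inside $W\cap\mathcal{S}_p(V)$, whose subsets have genus at most $l$ by Proposition~\ref{pro:index}(v). Since every variational eigenvalue is an actual eigenvalue of $\Delta_p^{\sigma}$, and $\Delta_p^{\sigma}$ is block-diagonal across components, $\lambda_{l+1}^{(p)}(G)$ is a positive eigenvalue of some $\Delta_p^{\sigma}(G_{j_0})$. It then suffices to prove the auxiliary fact that for each connected component $G_j$, $\lambda_2^{(p)}(G_j)$ equals the smallest positive eigenvalue of $\Delta_p^{\sigma}(G_j)$: for a positive eigenpair $(\lambda,g)$, summing the eigenequation over $V_j$ and using the antisymmetry of $\Psi_p$ on each edge yields the nonlinear orthogonality $\sum_i\mu_i\Psi_p(g(i))=0$, whence $a\mapsto\sum_i\mu_i|a+bg(i)|^p$ is convex with critical point at $a=0$, so $\sum_i\mu_i|a+bg(i)|^p\ge|b|^p\sum_i\mu_i|g(i)|^p$. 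This bounds $\mathcal{R}_p^{\sigma}$ above by $\lambda$ on the genus-$2$ set $\mathrm{span}(\mathbf{1},g)\cap\mathcal{S}_p(V_j)$, giving $\lambda_2^{(p)}(G_j)\le\lambda$. Applying this with $G_j=G_{j_0}$ yields $\lambda_{l+1}^{(p)}(G)\ge\lambda_2^{(p)}(G_{j_0})\ge\min_i\lambda_2^{(p)}(G_i)$.

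The main technical hurdle is the genus bound $\gamma(P)\ge l+1$ for the $\ell^p$-join: although genus-additivity under joins is classical in Lusternik--Schnirelman theory (cf.\ Rabinowitz, Struwe), it is not among the properties listed in Proposition~\ref{pro:index}, so it must either be invoked externally or reproved inside $\mathcal{S}_p(V)$ using items (ii) and (iii) there. A secondary subtlety is the nonlinear-orthogonality argument identifying $\lambda_2^{(p)}(G_j)$ with the smallest positive eigenvalue, which substitutes for the spectral decomposition available only when $p=2$.
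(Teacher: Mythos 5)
Your upper bound $\lambda_{l+1}^{(p)}(G)\le\min_i\lambda_2^{(p)}(G_i)$ has a genuine gap at the step $\gamma(P)\ge\gamma(B_0)+\gamma(S_{-i_0})$. Superadditivity of the Krasnoselskii genus under joins is not among the properties in Proposition \ref{pro:index}, and it is not a classical property of the genus: what \emph{is} easy is the opposite inequality. Applying Proposition \ref{pro:index}(iii) to the odd continuous map $H\colon P\to\mathbb{R}^{l-1}$ that records the coordinates of $F|_{V\setminus V_{i_0}}$ (so $H^{-1}(0)=B_0$) gives $\gamma(B_0)\ge\gamma(P)-(l-1)$, i.e.\ $\gamma(P)\le\gamma(B_0)+l-1$; the reverse inequality is a desuspension-type statement which holds for the cohomological index but not, as far as is known, for the genus. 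Concretely, to get $\gamma(P)\ge l+1$ by the mapping property you would need an odd continuous map $S^1\to B_0$, and the abstract minimizing set $B_0$ produced by Lemma \ref{app:2} is merely a closed symmetric set with $\gamma(B_0)\ge 2$ (no odd map to $\{\pm1\}$), which does not guarantee such a map exists. The repair is to make $B_0$ the unit $\ell^p$-sphere of a concrete $2$-dimensional subspace of $C(V_{i_0})$ on which $\mathcal{R}_p^{G_{i_0}}\le\lambda_2^{(p)}(G_{i_0})$; then your $P$ is the $\ell^p$-sphere of an $(l+1)$-dimensional linear space and Proposition \ref{pro:index}(i) applies directly. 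The paper uses $\mathrm{span}\{g_1,g_2\}$, where $g_1,g_2$ are the restrictions of a second eigenfunction $g_0$ of $G_{i_0}$ to its two strong nodal domains, and invokes \cite[Lemma 3.8]{TudiscoHein18} for the Rayleigh-quotient bound. Alternatively, and more in the spirit of your own argument, your nonlinear-orthogonality computation already shows $\mathcal{R}_p^{G_{i_0}}\le\lambda_2^{(p)}(G_{i_0})$ on $\mathrm{span}\{1_{V_{i_0}},g_0\}\cap\mathcal{S}_p(V_{i_0})$, so that subspace works and lets you bypass both the join and the nodal-domain lemma.

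The rest of your proposal is correct and in one place improves on the paper. The vanishing of $\lambda_1^{(p)},\dots,\lambda_l^{(p)}$ is proved exactly as in the paper, and so is the exclusion of $\lambda_{l+1}^{(p)}(G)=0$ via monotonicity of the genus. For the lower bound, the paper quotes \cite[Theorem 4.1]{DPT21} for the fact that a connected graph has no $p$-Laplacian eigenvalue strictly between $0$ and $\lambda_2^{(p)}$, whereas you derive it from scratch: summing the eigenvalue equation gives $\sum_i\mu_i\Psi_p(g(i))=0$, convexity of $a\mapsto\sum_i\mu_i|a+bg(i)|^p$ with critical point at $a=0$ gives the denominator bound, and the standard identity $\sum_{\{i,j\}}w_{ij}|g(i)-g(j)|^p=\lambda\sum_i\mu_i|g(i)|^p$ handles the numerator, so $\mathcal{R}_p\le\lambda$ on the genus-$2$ set $\mathrm{span}\{1,g\}\cap\mathcal{S}_p(V_j)$. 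That argument is sound and self-contained, and is a genuine simplification of the paper's reliance on \cite{DPT21}.
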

\begin{proof}
    For each $j\in \{1,2,\ldots,l\}$,
we define a function $f_j$ as below
\begin{equation}\label{eq:f_jconnectedcomponent}
f_j(x)=\left\{
	\begin{aligned}
		&1 &\qquad x\in V_j, \\
		&0 &\qquad \mathrm{otherwise.}
	\end{aligned}
	\right.
\end{equation}
 Let $W$  be the linear space spanned by the functions $\{f_j\}_{j=1}^l$. By Proposition \ref{pro:index} $(i)$, we have $\gamma\left(W\cap \mathcal{S}_p(V)\right)=l$. By definition of variational eigenvalues, we derive
 \begin{equation*}
   \lambda_l^{(p)}(G)=\min_{B\in \mathcal{F}_l\left(\mathcal{S}_p(V)\right)}\max_{f\in B}\mathcal{R}_p^{G}(f)\leq \max_{f\in W\cap \mathcal{S}_p}\mathcal{R}_p^{G}(f)=0, 
 \end{equation*}
where $\mathcal{R}_p^{G}$ is the Rayleigh quotient of $\Delta_p(G)$. Since $\kappa\equiv0$, we have $\lambda_l^{(p)}(G)\geq \lambda_1^{(p)}(G)\geq 0$. This concludes the proof of the first equality.
 
 For the second equality, we assume without loss of generality that $\lambda_{2}^{(p)}(G_1)=\min_{1\leq i\leq l}\lambda_{2}^{(p)}(G_i)$. Let $g_0:V_1\to \mathbb{R}$ be an eigenfunction corresponding to $\lambda_2^{(p)}(G_1)$. Let $V_1^+$ and $V_1^-$ be the sets $\{x\in V_1:g_0(x)>0\}$ and $\{x\in V_1:g_0(x)<0\}$, respectively. Define $g_1:V\to \mathbb{R}$ and $g_2:V\to \mathbb{R}$ as follows
 \begin{equation*}
g_1(x)=\left\{
	\begin{aligned}
		&g_0(x) &\qquad x\in V_1^+,   \\
		&0 &\qquad \mathrm{otherwise,}
	\end{aligned}
	\right.\,\,\,\,\text{and}\,\,\,\,
g_2(x)=\left\{
	\begin{aligned}
		&g_0(x) &\qquad x\in V_1^-,  \\
		&0 &\qquad \mathrm{otherwise.}
	\end{aligned}
	\right.
\end{equation*}
We use \cite[Theorem 4.1]{DPT21} to get that both $g_1$ and $g_2$ are non-zero. Let $W_1$ denote the linear space spanned by $\{f_2,f_3,\ldots,f_l,g_1,g_2\}$ where the functions $f_j$, $2\leq j \leq l$, are defined in (\ref{eq:f_jconnectedcomponent}). By Proposition \ref{pro:index} $(i)$, we have $\gamma\left(W_1\cap \mathcal{S}_p(V)\right)=l+1$. For any function $f$ in $W_1\cap \mathcal{S}_p(V)$, we denote it by  $f=\sum_{i=2}^l k_if_i+c_1g_1+c_2g_2$ where $k_2,\ldots, k_l, c_1,c_2\in \mathbb{R}$. If $c_1=c_2=0$, then we have $\mathcal{R}_p^G(f)=0$. If either $c_1$ or $c_2$ is nonzero, we compute
  \begin{equation*}
	\begin{aligned}
		 \mathcal{R}_p^G(f)&=\frac{\sum_{\{i,j\}\in E}|f(i)-f(j)|^p}{\sum_{i=1}^n|f(i)|^p}   \\	
        & =\frac{\sum_{\{i,j\}\in E_1}\left|c_1g_1(i)+c_2g_2(i)-c_1g_1(j)-c_2g_2(j)\right|^p}{\sum_{i\in V_1}|c_1g_1(i)+c_2g_2(i)|^p+\sum_{k=2}^l\sum_{j\in V_k}|f(j)|^p}\\
        &\leq \frac{\sum_{\{i,j\}\in E_1}\left|c_1g_1(i)+c_2g_2(i)-c_1g_1(j)-c_2g_2(j)\right|^p}{\sum_{i\in V_1}|c_1g_1(i)+c_2g_2(i)|^p} \\
        &\leq \lambda_2^{(p)}(G_1),
	\end{aligned}
\end{equation*}
 The last equality above is due to \cite[Lemma 3.8]{TudiscoHein18}, observing that $V_1^+$ and $V_1^-$ are the two strong nodal domains of the eigenfunction $g_0$ and $g_1, g_2$ are the restrictions of $g_0$ to $V_1^+, V_1^-$, respectively.
 By definition, we obtain
 \begin{equation}\label{eq:union}
		 \lambda_{l+1}^{(p)}(G)=\min_{B\in \mathcal{F}_{l+1}\left(\mathcal{S}_p(V)\right)}\max_{f\in B}\mathcal{R}_p^{G}(f)\leq \max_{f\in W_1\cap \mathcal{S}_p(V)}\mathcal{R}_p^{G}(f)\leq  \lambda_2^{(p)}(G_1).
\end{equation}
It is direct to see that the set of eigenvalues of $\Delta_p(G)$ is equal to the union of the sets of eigenvalues of $\Delta_p(G_i)$, $1\leq i\leq l$. Moreover, by \cite[Theorem 4.1]{DPT21}, there is no non-variational eigenvalues between the first and the second variational eigenvalues for any connected graph. Then, we derive by (\ref{eq:union}) that 
$\lambda_{l+1}^{(p)}(G)$ must be either $\lambda_{2}^{(p)}(G_1)$ or zero. 

Suppose that  $\lambda_{l+1}^{(p)}(G)$ is zero. Let $B_0\in \mathcal{F}_{l+1}\left(\mathcal{S}_p(V)\right)$ be a minimizing set of $\lambda_{l+1}^{(p)}(G)$ whose existence is due to Lemma \ref{app:2}. Since $\lambda_{l+1}^{(p)}(G)=0$, we have  $\mathcal{R}^G_p(f)=0$ for any $f\in B_0$. This implies that $f$ is constant on every connected components of $G$. So $f\in W$, and hence $B_0\subset W$. By monotonicity of the index, we get $\gamma(B_0)\leq \gamma(W)=l$. Contradiction. Therefore, $\lambda_{l+1}^{(p)}(G)$ must be $\lambda_2^{(p)}(G_1)$. This concludes the proof.
\end{proof}

\begin{pro}\label{pro:union L_n}
    Let $\Gamma=(G,\sigma)$ be a signed graph with $l$ connected components $\Gamma_k=(G_k,\sigma)$. Then, we have $$L_n(\Gamma)=\max_{1\leq k\leq l}L_n(\Gamma_k).$$
\end{pro}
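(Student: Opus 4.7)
The plan is to reduce the identity to the spectral characterization
$$L_n(\Gamma)=\frac{1}{2}\max_{\substack{\Gamma_0\in \mathcal{G},\,|V_{\Gamma_0}|=n\\ \Gamma_0 \text{ antibalanced}}}\lambda_n(A^\mu_{-\Gamma_0})$$
supplied by Theorem \ref{thm:L_n's antibalanced}, together with the trivial observation that the spectrum of a block-diagonal matrix is the union of the spectra of its blocks. Throughout, write $V=V_1\sqcup\cdots\sqcup V_l$ for the vertex partition into components and $\Gamma_k=(G_k,\sigma)$ with $G_k=(V_k,E_k)$.

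For the direction $L_n(\Gamma)\ge \max_k L_n(\Gamma_k)$, I would fix $k$ and, by applying Theorem \ref{thm:L_n's antibalanced} to $\Gamma_k$, choose an antibalanced subgraph $\Gamma_0^k$ of $\Gamma_k$ with $|V_{\Gamma_0^k}|=|V_k|$ achieving $L_n(\Gamma_k)=\tfrac{1}{2}\lambda_{|V_k|}(A^\mu_{-\Gamma_0^k})$. Since $\Gamma_0^k$ is also a (non-spanning) subgraph of $\Gamma$, the first inequality of Theorem \ref{thm:L_n's antibalanced} applied to $\Gamma$ with $k=n$ gives immediately
$$L_n(\Gamma)\;\ge\; \tfrac{1}{2}\lambda_{|V_{\Gamma_0^k}|}(A^\mu_{-\Gamma_0^k})\;=\;L_n(\Gamma_k),$$
and taking the maximum over $k$ finishes this direction.

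For the reverse direction, I would select an antibalanced spanning subgraph $\Gamma_0$ of $\Gamma$ realizing the maximum in the characterization of $L_n(\Gamma)$. Because every edge of $\Gamma$ lies inside some component $V_k$, the edges of $\Gamma_0$ also do, so $\Gamma_0$ decomposes as the disjoint union of its restrictions $\Gamma_0^k$ to $V_k$. Each $\Gamma_0^k$ inherits antibalance from $\Gamma_0$ (the cycle condition is preserved under taking subgraphs) and satisfies $|V_{\Gamma_0^k}|=|V_k|$. Ordering vertices by component, $A_{-\Gamma_0}$ becomes block-diagonal with blocks $A_{-\Gamma_0^k}$; since the diagonal matrix $D=\mathrm{diag}(\mu_i)$ respects this ordering, $A^\mu_{-\Gamma_0}=D^{-1}A_{-\Gamma_0}$ is block-diagonal with blocks $A^\mu_{-\Gamma_0^k}$, and its spectrum is the union of the spectra of these blocks. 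Consequently
$$\lambda_n(A^\mu_{-\Gamma_0})\;=\;\max_{1\le k\le l}\lambda_{|V_k|}(A^\mu_{-\Gamma_0^k}),$$
and applying Theorem \ref{thm:L_n's antibalanced} to each $\Gamma_k$ individually yields
$$L_n(\Gamma)\;=\;\tfrac{1}{2}\lambda_n(A^\mu_{-\Gamma_0})\;\le\;\max_{1\le k\le l}L_n(\Gamma_k).$$

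There is no genuine obstacle beyond careful bookkeeping. The only subtlety worth flagging is that $A^\mu_\Gamma$ is generally not symmetric; however, it is similar to the symmetric matrix $D^{-1/2}AD^{-1/2}$, its eigenvalues are therefore real, and the block-diagonal structure persists after this similarity so the union-of-spectra argument is valid.
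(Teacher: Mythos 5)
Your proof is correct, but it takes a genuinely different route from the paper. The paper's argument is a two-line computation that never touches Theorem \ref{thm:L_n's antibalanced}: since the top variational eigenvalue is simply the global maximum of the Rayleigh quotient over the sphere, and since for a function on a disconnected graph the quotient is a weighted average of the quotients of its restrictions to the components (so the maximum over $\mathcal{S}_p(V)$ splits as the maximum over the $\mathcal{S}_p(V_k)$), one gets the stronger identity
$$\lambda_n^{(p)}(\Gamma)=\max_{1\leq k\leq l}\lambda_n^{(p)}(\Gamma_k)$$
for \emph{every} finite $p$, and the proposition follows by multiplying by $2^{-p}$ and letting $p\to\infty$. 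Your argument instead works entirely at the level of the limits $L_n$, routing through the antibalanced-spanning-subgraph characterization of $L_n$ and the block-diagonal structure of $A^\mu_{-\Gamma_0}$; all the steps check out (the maxima are attained because there are finitely many spanning subgraphs, restrictions of antibalanced subgraphs to components remain antibalanced since their cycles are cycles of the original, and the degenerate case of an edgeless restriction contributes the eigenvalue $0\leq L_n(\Gamma_k)$). What the paper's approach buys is economy and a sharper conclusion, valid for each $p$ rather than only in the limit; what yours buys is a nice illustration that the variational characterization in Theorem \ref{thm:L_n's antibalanced} localizes to connected components, and it would generalize to any situation where only the limiting invariants, not the $p$-Laplacian eigenvalues themselves, are accessible.
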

\begin{proof}
    Let us denote $G=(V,E)$ and $G_k=(V_k,E_k)$. This proposition follows from
    $$\lambda_n^{(p)}(\Gamma)=\max_{g\in \mathcal{S}_p(V)}\mathcal{R}^{\Gamma}_p(g)=\max_{1\leq k\leq l}\max_{g\in \mathcal{S}_p(V_k)}\mathcal{R}^{\Gamma_k}_p(g)=\max_{1\leq k\leq l}\lambda_n^{(p)}(\Gamma_k),$$ where  $\mathcal{R}_p^{\Gamma}$ and $\mathcal{R}_p^{\Gamma_k}$ are the Rayleight quotients of the $p$-Laplacians of $\Gamma$ and $\Gamma_k$, respectively.
\end{proof}

We are now prepared to present the following characterization of star graphs with isolated vertices via their cut-off adjacency eigenvalues.

\begin{theorem}\label{thm:star}
   Let $G=(V,E)$ be a graph. Then $(L_1,L_2,\ldots,L_{n-1},L_n)=(0,0,\ldots,0,C)$ with $C>0$ if and only if the graph  $G$ is a star graph of $4C^2+1$ vertices together with $n-4C^2-1$ isolated vertices.
\end{theorem}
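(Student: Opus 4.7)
For the sufficiency direction, let $v_*$ denote the center of the star component of $\Gamma$. I would first exhibit the set $W:=\{g\in\mathcal{S}_2:g(v_*)=0\}$, which is the unit sphere of an $(n-1)$-dimensional subspace of $C(V)$, so $\gamma(W)=n-1$ by Proposition~\ref{pro:index}(i). Since every edge of $G$ is incident to $v_*$, the functional $\mathcal{R}^{\sigma}_{2,\infty}$ vanishes identically on $W$, and Theorem~\ref{thm:varitional} (with $q=2$) then gives $L_{n-1}\le 0$, hence $L_1=\cdots=L_{n-1}=0$. For $L_n$, Proposition~\ref{pro:union L_n} reduces the computation to the star component alone (isolated components contribute $0$), and writing $g(v_*)=x$ and $g(v_i)=-y_i$ on the leaves, the maximum of $x\sum_i y_i$ subject to $x^2+\sum_i y_i^2=1$ is obtained by Cauchy--Schwarz at $x=1/\sqrt{2}$, yielding $L_n=\sqrt{m-1}/2=C$ where $m=4C^2+1$ is the number of star vertices.

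For necessity, let $s$ be the number of isolated vertices of $G$ and let $\Gamma^*$ denote the induced subgraph on the non-isolated vertices. Applying Lemma~\ref{lemma:edge cover} with $k=s$ (and noting $w_{\min}/\mu_{\max}=1$), the bound $L_{\beta(\Gamma^*)+s+1}(\Gamma)\ge 1/2>0$ together with the hypothesis $L_{n-1}(\Gamma)=0$ forces $\beta(\Gamma^*)+s+1\ge n$, i.e.\ $\beta(\Gamma^*)\ge |V(\Gamma^*)|-1$. Combined with Gallai's identity $\beta(\Gamma^*)+\nu(\Gamma^*)=|V(\Gamma^*)|$ (valid because $\Gamma^*$ has no isolated vertices) and the trivial lower bound $\nu(\Gamma^*)\ge 1$, this yields $\beta(\Gamma^*)=|V(\Gamma^*)|-1$ and $\nu(\Gamma^*)=1$. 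The constraint $\nu(\Gamma^*)=1$ moreover forces $\Gamma^*$ to be connected, since two non-trivial components would provide two independent matching edges.

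By Proposition~\ref{pro:star}, $\Gamma^*$ is therefore either $K_3$ or a star $K_{1,m-1}$. To rule out $K_3$, I would invoke the interlacing Theorem~\ref{thm:Interlacing} on the inclusion $\Gamma^*=K_3\hookrightarrow\Gamma$ obtained by removing the $s$ isolated vertices: this gives $L_2(K_3)\le L_{2+s}(\Gamma)=L_{n-1}(\Gamma)=0$, contradicting $L_2(K_3)=1/2$ from Theorem~\ref{thm:L-compete}. Hence $\Gamma^*=K_{1,m-1}$, and the computation of $L_n$ from the sufficiency direction (via Proposition~\ref{pro:union L_n} and Cauchy--Schwarz) fixes $\sqrt{m-1}/2=C$, i.e.\ $m=4C^2+1$.

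The main obstacle I anticipate is the combinatorial bookkeeping between $\Gamma$ and $\Gamma^*$: Lemma~\ref{lemma:edge cover} applies only to subgraphs free of isolated vertices, so one must isolate the "essential" subgraph and chain the bound with Gallai's identity to extract the matching constraint. Once $\nu(\Gamma^*)=1$ is in hand and Proposition~\ref{pro:star} supplies the two candidates, the interlacing theorem together with the already known value $L_2(K_3)=1/2$ cleanly discriminates the star from $K_3$, and the remaining calculations are direct.
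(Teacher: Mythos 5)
Your proposal is correct and follows the same overall skeleton as the paper's proof (force the minimum edge cover of the non-isolated part to be as large as possible via Lemma~\ref{lemma:edge cover}, invoke Proposition~\ref{pro:star}, exclude $K_3$, then compute $L_n$ of the star), but several of your sub-steps are genuinely different. For sufficiency, the paper gets $L_{n-1}=0$ from Theorem~\ref{thm:Inertia vertices} (your subspace $W$ is exactly the span of the indicators of the size-$(n-1)$ independent set, so this is the same argument unwound), but for $L_n$ the paper cites Amghibech's closed formula $\lambda_n^{(p)}=\bigl(1+(2C)^{2/(p-1)}\bigr)^{p-1}$ for the star and passes to the limit, whereas you compute $\max_{\mathcal{S}_2}\mathcal{R}^{\sigma}_{2,\infty}$ directly by Cauchy--Schwarz; your route is more self-contained, though you should note explicitly that $L_n$ equals this global maximum (this follows from $\lambda_n^{(p)}=\max_{\mathcal{S}_p}\mathcal{R}^{\sigma}_p$, which the paper itself uses in Proposition~\ref{pro:union L_n}, or from Theorem~\ref{thm:varitional} with $k=n$). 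For necessity, the paper bounds $\beta(\Gamma^*)$ from above by $\sum_i(|V_i|-1)$ using spanning trees of the components and deduces $l=1$ directly from Lemma~\ref{lemma:edge cover}; you instead combine the lemma with Gallai's identity $\beta+\nu=|V(\Gamma^*)|$ and $\nu\ge 1$ to get $\beta=|V(\Gamma^*)|-1$ and $\nu=1$, then read off connectivity from $\nu=1$ --- equivalent, and arguably tidier since Proposition~\ref{pro:star} is proved via Gallai anyway. To exclude $K_3$ you use the interlacing Theorem~\ref{thm:Interlacing} together with $L_2(K_3)=\tfrac12$, while the paper uses Proposition~\ref{pro:union} plus Theorem~\ref{thm:L-compete}; your interlacing argument is clean and avoids the slight awkwardness of applying Proposition~\ref{pro:union} in the presence of single-vertex components.

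One small point to patch: before applying Lemma~\ref{lemma:edge cover} you implicitly assume $\Gamma^*$ is nonempty (otherwise there is no edge cover to speak of). This degenerate case is excluded by Theorem~\ref{thm:noedge}, since an edgeless graph has $L_n=0\neq C$; the paper handles this with its ``$l=0$'' case, and you should add the corresponding sentence.
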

\begin{proof}
 Sufficiency: The cardinality of a maximum independent set of a star graph with isolated vertices is $n-1$. By Theorem \ref{thm:Inertia vertices}, we have $L_1=L_2=\cdots=L_{n-1}=0$. By Proposition \ref{pro:union L_n}, we get for any $p$ that $\lambda_n^{(p)}$ is equal to the largest eigenvalue of the $p$-Laplacian of a star graph. By \cite[Lemma 3.1]{Amghibech06}, the largest eigenvalue of the star graph with $4C^2+1$ vertices is $\left(1+(2C)^{\frac{2}{p-1}}\right)^{p-1}$. Therefore, we compute directly
    \begin{equation*}
        L_n=\lim_{p\to \infty}2^{-p}\left(1+(2C)^{\frac{2}{p-1}}\right)^{p-1}=C.
    \end{equation*}
This proves the  sufficiency.

    Necessity: Assume that $G$ has $k$ isolated vertices. Removing all these isolated vertices from $G$ yields an induced subgraph $G'=(V',E')$. If $G$ does not have isolated vertices, then $k$ is set to be zero. We assume that $G'$ has $l$ connected components and denote them by $\{G_i\}_{i=1}^l$ where $G_i=(V_i,E_i)$. For each $i$, we fix a spanning tree of $G_i$. So we can cover all vertices in $V_i$ by the edges of this spanning tree. We  do this for every connected components to get an edge cover of $G'$ with the cardinality of $\sum_{i=1}^l\left(|V_i|-1\right)$. Applying Lemma \ref{lemma:edge cover} yields that $$L_{n-l+1}=L_{\sum_{i=1}^l\left(|V_i|-1\right)+k+1}>0.$$
    Since $L_{n-1}=0$, we get  $l\leq1$. If $l=0$, then $G$ only has isolated vertices. Contradiction. So we have $l=1$. Denote this only connected component by $V_1$. Moreover, we conclude that the cardinality of a minimum edge cover of $G_1 $ is $n-k-1$. By Proposition \ref{pro:star}, $G_1$ is either $K_3$ or a star graph. Next, we prove that $K_3$ is impossible. If  $G_1$ is $K_3$, we have by Theorem \ref{thm:L-compete}  and Proposition \ref{pro:union} that $L_{n-1}=\lim_{p\to \infty}2^{-p}\lambda^{(p)}_{n-1}= \frac{1}{2}>0$. Contradiction. So  $G_1$ must be a star graph.

    At last, by the same computation as in the proof of sufficiency, we get that the number of vertices of the star graph is $4C^2+1$. 
\end{proof}
To conclude this section, we point out that graphs with no more than 3 vertices are completely determined by their cut-off adjacency eigenvalues. This is a straightforward consequence of Theorems \ref{thm:noedge}, \ref{thm:L-compete}, and \ref{thm:star}.

\begin{example}
Let $G=(V,E)$ be a graph with $|V|=2.$ By Theorem \ref{thm:noedge} and Theorem \ref{thm:star}, we have $(L_1,L_2)=(0,0)$ if and only if $G$ has no edge, and $(L_1,L_2)=(0,\frac{1}{2})$  if and only if $G$ is $K_2$.
\end{example}
\begin{example}
Let $G=(V,E)$ be a graph with $|V|=3.$ Combining Theorem \ref{thm:noedge}, Theorem \ref{thm:L-compete} and Theorem \ref{thm:star}, we have $(L_1,L_2,L_3)=(0,0,0)$ if and only if $G$ has no edge; $(L_1,L_2,L_3)=(0,0,\frac{1}{2})$  if and only if $G$ has only one edge;  $(L_1,L_2,L_3)=(0,0,\frac{\sqrt{2}}{2})$ if and only if $G$ is $K_{2,1}$; $(L_1,L_2,L_3)=(0,\frac{1}{2},\frac{\sqrt{2}}{2})$ if and only if $G$ is $K_3$.
\end{example}

\label{Application}

\section*{Acknowledgement}
CG and SL are supported by the National Key R and D Program of China 2020YFA0713100, the National Natural Science Foundation of China (No. 12031017), and Innovation Program for Quantum Science and Technology 2021ZD0302902. DZ is supported by the Fundamental Research Funds for the Central Universities. 
The main structure of this paper was largely determined during DZ's visit to USTC, and DZ is particularly grateful to USTC for its hospitality.

\end{document}